\documentclass{article}
\usepackage[utf8]{inputenc}
\usepackage{xcolor}
\usepackage{geometry}
\usepackage{amsmath, amssymb}
\usepackage{amsthm}
\usepackage{amssymb}
\usepackage{graphicx}
\usepackage{ mathrsfs }
\newtheorem{theorem}{Theorem}
\newtheorem{definition}{Definition}
\newtheorem{corollary}{Corollary}
\newtheorem{proposition}{Proposition}
\newtheorem{lemma}{Lemma}
\newtheorem{remark}{Remark}
\newtheorem{example}{Example}
\usepackage[colorlinks=true, allcolors=blue]{hyperref}
\usepackage{cleveref}
\usepackage{algorithm}
\usepackage{algorithmicx}
\usepackage{algcompatible}
\usepackage{float}

\newtheorem{assumptionenv}{Assumption}
\newcounter{assumption}

\Crefname{assumptionenv}{assumption}{assumptions}
\Crefname{assumptionenv}{Assumption}{Assumptions}

\newenvironment{assumption}
  {\refstepcounter{assumption}\begin{assumptionenv}\label{assumption:\theassumption}}
  {\end{assumptionenv}}

\usepackage{tocloft}

\usepackage{titlesec}

\titleformat{\subsection}
  {\normalfont\large\bfseries}
  {\thesubsection}
  {1em}
  {}

\usepackage{lipsum}

\usepackage{caption}
\usepackage{graphicx}
\usepackage{tikz}
\usepackage{float}
\usepackage{subcaption}
\usepackage{appendix}
\usepackage{comment}
\usepackage{enumitem}
\usepackage[colorinlistoftodos]{todonotes}
\usetikzlibrary{arrows.meta, shapes.geometric}

\newcommand{\R}{\mathbb{R}}

\newcommand{\rank}{\textrm{rank }}

\def\R{\mathbb{R}}

\newcommand{\wassspace}{{(\mathcal P_2(\mathbb R^n),W_2)}} 

\newcommand{\Nalpha}{N_{\alpha}}
\newcommand{\Nbeta}{N_{\beta}}

\newcommand{\FNa}{F_{\Nalpha}}                   
\newcommand{\FNb}{F_{\Nbeta}}                    
\newcommand{\FdagNa}{F^{\dagger}_{\Nalpha}}      
\newcommand{\FdagNb}{F^{\dagger}_{\Nbeta}}       
\newcommand{\dwm}{d_{\textrm{WM}}}
\newcommand{\dwmhat}{\hat d_{\textrm{WM}}}

\newcommand{\Pmeas}{\mathcal{P}}                 

\begin{document}

\title{Wasserstein Mahalanobis Distances for Recovering Latent Geometry}
\author{
Chuxiangbo Wang\footnote{Department of Mathematics, University of North Carolina at Chapel Hill, email: \{chuxianw, cmoosm\}$@$unc.edu}, Shiying Li\footnote{Department of Mathematics, University of Nebraska at Lincoln, email: sli82$@$unl.edu}, and Caroline Moosm\"uller$^{\ast}$
}
\date{}

\maketitle

\begin{abstract}

The Mahalanobis distance is a fundamental covariance-adapted metric for multivariate data and plays a central role in recovering latent geometry from nonlinear observations. We extend this principle from vector-valued data to probability measures by introducing a Wasserstein Mahalanobis distance. Our construction replaces Euclidean displacement vectors with optimal transport displacement fields and local covariance matrices with covariance operators defined on Wasserstein tangent spaces.

We show that this construction inherits the geometry-recovery property underlying nonlinear independent component analysis. In particular, for Gaussian measures with common covariance transformed by a smooth nonlinear pushforward, the proposed Wasserstein Mahalanobis distance approximates the classical Mahalanobis distance between the transformed latent means. The correspondence is exact for affine transformations and holds up to controlled higher-order error terms for general smooth transformations. These results establish a distribution-valued analog of classical Mahalanobis geometry and provide theoretical support for covariance-adapted learning directly in Wasserstein space. Numerical experiments confirm the theoretical predictions and demonstrate accurate recovery of latent geometric structure.

\emph{Keywords: Mahalanobis distance, Wasserstein distance, latent geometry recovery}

\emph{MSC: 49Q22, 62H20, 68T09}
\end{abstract}

\tableofcontents

\section{Introduction}
The Mahalanobis distance, introduced by P. C. Mahalanobis in 1936 \cite{mahalanobis1936generalized}, is a covariance-adapted notion of distance for multivariate data. Unlike Euclidean distance, it accounts for the local variability of the data by rescaling directions according to the covariance structure. When the covariance matrix is symmetric positive definite, the Mahalanobis distance is equivalent to Euclidean distance under an invertible linear transformation, commonly referred to as the whitening transformation \cite{de2000mahalanobis}. Such distances have been widely used in multivariate data analysis and machine learning, including classification \cite{de2000mahalanobis}, outlier detection \cite{rousseeuw1990unmasking,leys2018detecting}, and distance metric learning \cite{xing2002distance,weinberger2009distance}. More generally, local Mahalanobis-type distances \cite{Roberto2013LocalMahalanobis} allow the covariance structure to vary with the base point and have proven useful for recovering intrinsic geometric structure from data. This local viewpoint has been used in nonlinear data analysis, for example to recover intrinsic variables from nonlinear observations \cite{singer08}, and in data-driven modeling of high-dimensional time-series data generated by lower-dimensional intrinsic variables \cite{talmon2013empirical}.

A particularly influential example is the nonlinear independent component analysis (ICA) framework of \cite{singer08}. There, observations are assumed to arise from an unknown nonlinear transformation of latent variables. Although Euclidean distances between observations are distorted by the transformation, local covariance information can be used to construct a Mahalanobis-type distance that recovers the intrinsic geometry of the latent space. From this perspective, the  Mahalanobis distance provides a mechanism for recovering latent geometric structure from transformed observations.

Many modern datasets, however, are naturally represented not as vectors but as probability measures or point clouds. Examples include cell populations in single-cell genomics \cite{mathews18}, collections of word embeddings in text analysis \cite{zhang2010understanding}, and image distributions \cite{rubner2000earth}. In such settings, the basic data object is itself a distribution, and the geometric information of interest is encoded in how mass is distributed rather than in the location of a single point. This raises a natural question:

\emph{Can the geometry-recovery principles underlying local Mahalanobis distance be extended from point-valued data to distribution-valued data?}

Optimal transport provides a natural framework for addressing this question. The quadratic Wasserstein distance measures similarity between probability measures through the cost of transporting mass, and its associated Riemannian-type structure endows the space of probability measures with well-defined tangent spaces \cite{otto2000geometry,Villani2009}. In particular, optimal transport maps generate displacement functions that play the role of tangent vectors in Wasserstein space. This suggests a natural analog of the local Mahalanobis construction: replace Euclidean displacement vectors by optimal transport displacement functions and replace covariance matrices by covariance operators acting on Wasserstein tangent vectors.

This viewpoint leads to a Wasserstein analog of the local Mahalanobis framework. Given a collection of probability measures, we construct local covariance operators from optimal transport displacement fields between neighboring measures. These covariance operators define a Wasserstein Mahalanobis distance that adapts to the local geometry of the measure space in the same spirit that classical Mahalanobis distance adapts to local covariance structure in Euclidean space.

Our motivation comes directly from the geometry-recovery setting of \cite{singer08}. Suppose that probability measures are generated from latent parameters and subsequently transformed by an unknown nonlinear pushforward map. Can a distance constructed solely from the transformed measures (and their neighborhoods) recover the geometry of the latent parameter space? We investigate this question for Gaussian families, where optimal transport maps admit explicit descriptions and the resulting geometry can be analyzed theoretically. 

Our main result shows that, for Gaussian measures with common covariance, the proposed Wasserstein Mahalanobis distance recovers the classical Mahalanobis geometry associated with the transformed mean manifold. More precisely, if
$$
\mu_i=f_{\sharp}\mathcal{N}(m_i,\Sigma),
$$
then the Wasserstein Mahalanobis distance between $\mu_1$ and $\mu_2$ agrees with the corresponding Mahalanobis distance between $f(m_1)$ and $f(m_2)$ up to a controlled error. 
Informally, our main result is the following
\begin{equation}\label{intro:main_result}
\dwm^2(f_{\sharp}\mathcal{N}(m_i,\Sigma),f_{\sharp}\mathcal{N}(m_j,\Sigma)) = d_M^2(f(m_i),f(m_j))+O(\|m_i-m_j\|^2), 
\end{equation}
where $d_M$ is the Mahalanobis distance of \cite{singer08} and $\dwm$ is the Wasserstein Mahalanobis that we define in \Cref{sec:Mahalanobis distance in Wasserstein spaces}. { Note that, when $f$ is an orthogonal  transformation, $W_2\bigl(f_{\sharp}\mathcal{N}(m_i,\Sigma),f_{\sharp}\mathcal{N}(m_j,\Sigma)\bigr)
=\lVert f(m_i)-f(m_j)\rVert =
\lVert m_i-m_j\rVert$. Under this assumption, our earlier work \cite{LMW2025linear} used linear ICA based on the Wasserstein distance $W_2$ to recover statistically independent latent variables. More generally,  we investigated settings in which the Wasserstein distance between observed measures is controlled, up to a small error, by the Euclidean distances between their associated latent parameters. The present work considers more general transformations and instead studies the Wasserstein Mahalanobis and related approximation results. }

The covariance operators built from Wasserstein tangent vectors recover the same local geometry that classical Mahalanobis distances recover in the point-valued setting. This provides a distribution-level analog of the geometric consistency results underlying nonlinear ICA and local Mahalanobis learning.

Related work on Mahalanobis Wasserstein distances has largely focused on combining Mahalanobis-type weighting with one-dimensional Wasserstein geometry. Verde and Irpino \cite{verde2008comparing}, for example, introduced a Mahalanobis--Wasserstein distance for histogram-valued data using quantile representations. Similar quantile-based constructions have been used in applications such as voltage-based phase mapping \cite{lima2025robust}. Other works incorporate Mahalanobis-type structure directly into optimal transport costs \cite{kerdoncuff2020metric, zhang2024mahalanobis, lin2024small} or into related sliced-Wasserstein constructions \cite{bonet2025sliced}. In contrast, our objective is not to modify the ground transport costs, but to extend the covariance-based geometric principles of local Mahalanobis distance to Wasserstein space through covariance operators of optimal transport displacement fields. 

The remainder of the paper is organized as follows. In \Cref{sec:preliminary}, we review background on optimal transport and Mahalanobis distance. In \Cref{sec:Mahalanobis distance in Wasserstein spaces}, we introduce the Wasserstein Mahalanobis distance, while \Cref{Special approximation results} analyzes its geometric and approximation properties, with particular emphasis on Gaussian measures. In \Cref{sec:numerics}, we present numerical experiments illustrating the proposed construction and its geometry-recovery behavior.

\section{Preliminaries} \label{sec:preliminary}

\subsection{Mahalanobis distances in Euclidean space} \label{sec:preliminaries:mahalanobis}

Given two points $x_i,x_j\in\mathbb{R}^n$, the Euclidean distance $\|x_i-x_j\|$ measures displacement using the same scale in every coordinate direction. In other words, its balls  $B(x_j, \delta) = \{x:\|x-x_j\|\le \delta\}$ are round. However, the local spread of a dataset may have very different scales in different directions. For example, nearby points may form an elongated cloud rather than a round one, meaning that the data vary substantially in some directions but only slightly in others.   In such cases, it is natural to measure the displacement $x_i-x_j$ relative to the local variability of the data around $x_j$, which leads to Mahalanobis-type distances based on a local covariance matrix. For each point  $x_j \in \mathbb{R}^n$, let
\begin{equation}\label{euclidean neiboorhood}
     N_{x_j} = \{ x_{j_1}, x_{j_2}, \ldots, x_{j_K} \} 
\end{equation}
be the set of $K$ nearby points around $x_j$, which is viewed  as a local neighborhood set describing how the nearby data distribute around $x_j$. The empirical local covariance matrix at $x_j$ with respect to $N_{x_j}$ is 
\begin{equation}\label{local coavriance matrix}
   C_j = \frac{1}{K-1} \sum_{m=1}^K (x_{j_m} - x_j)(x_{j_m} - x_j)^T.
\end{equation}
The locally centered Mahalanobis distance \cite{Roberto2013LocalMahalanobis} is defined as
\begin{equation*}
        d_{LM}({x_i}, {x_j}) = \sqrt{({x_i} - {x_j})^T C_{j}^\dagger ({x_i} - {x_j})},
    \end{equation*}
where $C_j^\dagger$ denotes the pseudoinverse matrix 
 when $C_j$ is singular (for example,  when $N_{x_j}$ lie in a lower dimensional subspace).  
Note that $d_{LM}$ is generally not symmetric as it only uses the covariance at $x_j$. To remove the asymmetry in $d_{LM}(x_i,x_j)$, we follow \cite{singer08}'s construction by combining the local covariance information at both $x_i$ and $x_j$. 
Let $N_{x_i}$ and $N_{x_j}$ be neighborhood sets of $x_i$ and $x_j$, with associated local covariance matrices
$C_i$ and $C_j$. 
We define the symmetric locally centered Mahalanobis distance by

\begin{align}\label{symetric locally centered Mahalanobis distance}
    d^2_M(x_i, x_j) &= \frac{1}{2} \left[ (x_i - x_j)^T C_j^{\dagger} (x_i - x_j) + (x_j - x_i)^T C_i^{\dagger} (x_j - x_i) \right] \\ \nonumber
    &= \frac{1}{2} (x_i - x_j)^T (C_j^{\dagger} + C_i^{\dagger}) (x_i - x_j).
\end{align}
By construction, $d_M(x_i,x_j)=d_M(x_j,x_i)$ and $d_M(x_i,x_j)\ge 0$. However, $d_M$ is in general only a semimetric and may fail the triangle inequality. Since the Mahalanobis distance depends on a choice of neighborhoods $N_{x_i}$ and $N_{x_j}$, in what follows, we always mention which neighborhoods have been used to compute it.

A key motivation for the symmetric locally centered Mahalanobis distance $d_M$ is that it can recover meaningful geometric information from data that have been transformed. Let $\mathcal{M}_X, \mathcal{M}_Y $ be two smooth manifolds, suppose we can observe data points $y_i\in \mathcal{M}_Y\subset\mathbb{R}^m$ that are obtained from unknown latent points $x_i\in\mathcal{M}_X\subset\mathbb{R}^n$ through an unknown smooth map

\begin{equation*}
    y_i=f(x_i) \qquad f:\mathcal{M}_X\to\mathcal{M}_Y.
\end{equation*}
Although the Euclidean distance $\|y_i-y_j\|$ may be distorted from $\|x_i-x_j\|$ by the mapping $f$, the symmetric locally centered Mahalanobis distance $d_M(y_i,y_j)$ computed using neighborhoods of $y_i$ and $y_j$ can, under certain local assumptions, approximate the Euclidean distance $\|x_i-x_j\|$ in the latent space:

\begin{theorem}[\cite{singer08}]\label{singer's result}
		Let $f: \mathcal{M}_X \to \mathcal{M}_Y $ be a smooth mapping between smooth manifolds, where $\mathcal{M}_X \subset \mathbb{R}^n$ and $\mathcal{M}_Y \subset \mathbb{R}^m$. Assume $d = \dim(\mathcal{M}_X) = \dim(\mathcal{M}_Y)$. Let $ x_i, x_j \in \mathcal{M}_X$ and let $\delta > 0$. Define the $\delta$-ball $B(x,\delta)$ by
        \begin{equation*}
            B(x,\delta)=\{z\in\mathbb{R}^n:\|z-x\|\le \delta\}.
        \end{equation*}
Let $B_{x_i,\delta}=\{x_{i,1},\dots,x_{i,k}\}$, where the points $\{x_{i,\ell}\}_{\ell=1}^k$ are sampled uniformly from $B(x_i,\delta)$; similarly define $B_{x_j,\delta}$. The observations $\{y_{i,\ell}=f(x_{i,\ell}):x_{i,\ell} \in B_{x_i,\delta}\}$ lie approximately inside a small ellipsoid $\mathcal{Q}_{y_i, \delta} = f(B_{x_i,\delta} \cap \mathcal{M}_X)$ centered at $y_i = f(x_i)$; a similar statement holds for $\mathcal{Q}_{y_j, \delta}$. Then, for $\delta$ sufficiently small, 
		
\begin{equation*}
\|x_j - x_i\|^2 = \frac {\delta^2}{d + 2} d_M^2(y_j, y_i)  + O(\|x_j - x_i\|^4),
\end{equation*}
where the Mahalanobis distance is computed with respect to $\mathcal{Q}_{y_i, \delta}$ and $ \mathcal{Q}_{y_j, \delta}$.

\end{theorem}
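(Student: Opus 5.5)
We follow the strategy of \cite{singer08}. First we compute the local covariance of the images of the $\delta$-ball, then we Taylor-expand $f$ around the midpoint of $x_i$ and $x_j$ and compare the two expansions. For simplicity of exposition we treat the full-dimensional case $\mathcal{M}_X=\mathbb{R}^d$, with $f$ an immersion of rank $d$. The general manifold case reduces to this one by working in a local chart, or by projecting onto tangent spaces, and only adds higher-order curvature terms.

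\textbf{Step 1 (local covariance).} Write $J_i = Df(x_i)\in\mathbb{R}^{m\times d}$. For $x\in B(x_i,\delta)$ we have $f(x)-f(x_i) = J_i(x-x_i) + O(\delta^2)$. For the uniform distribution on the $d$-ball of radius $\delta$,
\[
\mathbb{E}\bigl[(x-x_i)(x-x_i)^T\bigr] = \frac{\delta^2}{d+2} I_d .
\]
Interpreting $C_i$ as the population (large-$k$) covariance of $\mathcal{Q}_{y_i,\delta}$, this gives
\[
C_i = \frac{\delta^2}{d+2}\, J_iJ_i^T + O(\delta^3).
\]
The first-order cross terms with the quadratic remainder vanish by the symmetry of the ball, so the error is in fact $O(\delta^4)$ after rescaling. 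Since $J_i$ has full column rank, the pseudoinverse behaves continuously on matrices of fixed rank, and the leading term is
\[
C_i^\dagger \approx \frac{d+2}{\delta^2}\,(J_iJ_i^T)^\dagger .
\]

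\textbf{Step 2 (symmetric expansion).} Let $\bar x = (x_i+x_j)/2$ and $h = x_j-x_i$. Taylor-expanding around $\bar x$ gives
\[
y_j-y_i = J(\bar x)h + O(\|h\|^3),
\]
because the even-order terms cancel. Likewise
\[
J_i = J(\bar x) - \tfrac12 D^2f(\bar x)[h] + O(\|h\|^2), \qquad J_j = J(\bar x) + \tfrac12 D^2f(\bar x)[h] + O(\|h\|^2).
\]
Set $J=J(\bar x)$. We use the identity
\[
(JJ^T)^\dagger J = J(J^TJ)^{-1}, \qquad \text{so} \qquad h^TJ^T(JJ^T)^\dagger Jh = \|h\|^2 .
\]
Write $P_{i}=(J_iJ_i^T)^\dagger$ and $P_{j}=(J_jJ_j^T)^\dagger$. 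Then
\[
\tfrac12 (y_j-y_i)^T(P_i+P_j)(y_j-y_i) = \|h\|^2 + \text{corrections}.
\]
The corrections linear in $D^2f[h]$ enter with opposite signs from $i$ and $j$, so they cancel in the average. The remaining terms are $O(\|h\|^4)$. Multiplying by $\delta^2/(d+2)$ then yields the claim.

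\textbf{Main obstacle.} The delicate point is the perturbation of the pseudoinverse in Step 2. The perturbations of $J_i$ and $J_j$ change the column space, so the difference $(J_iJ_i^T)^\dagger - (JJ^T)^\dagger$ acting on vectors in the range of $J$ needs care. We plan to handle this by the substitution
\[
(J_iJ_i^T)^\dagger = J_i(J_i^TJ_i)^{-2}J_i^T .
\]
With it, the quadratic form becomes
\[
v_i^T(J_i^TJ_i)^{-2}v_i, \qquad v_i = J_i^TJh .
\]
This is an expression involving only invertible $d\times d$ matrices, which can be expanded smoothly in $h$. A second subtlety is that the $O(\delta)$ errors in $C_i$ from Step 1 must be subordinate to the $O(\|h\|^4)$ error, which requires $\delta$ small relative to $\|h\|$. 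As in \cite{singer08}, we treat this as part of the meaning of ``for $\delta$ sufficiently small''. We also identify the empirical covariance with its expectation, letting $k\to\infty$.
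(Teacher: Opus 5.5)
Your Step 2 is correct. With $(J_iJ_i^T)^\dagger=J_i(J_i^TJ_i)^{-2}J_i^T$, the $O(\|h\|^3)$ terms of the two quadratic forms are negatives of each other, since they are linear in $\pm\frac12D^2f(\bar x)[h]$. Hence $\frac12(y_j-y_i)^T(P_i+P_j)(y_j-y_i)=\|h\|^2+O(\|h\|^4)$.

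\textbf{The gap.} It lies in the passage from $C_i$ to $P_i$ in Step 1 when $m>d$, a case the statement explicitly allows. You appeal to continuity of the pseudoinverse on matrices of fixed rank, but $C_i$ does not have rank $d$. Its $O(\delta^4)$ correction $\frac14\mathbb{E}\bigl[D^2f(x_i)[u,u]\,D^2f(x_i)[u,u]^T\bigr]$ has a nonzero component normal to $\mathrm{range}(J_i)$ whenever $\mathcal{M}_Y$ is curved at $y_i$. So $C_i$ acquires eigenvalues of order $\delta^4$ in normal directions, and $C_i^\dagger$ has norm of order $\delta^{-4}$ there. Meanwhile $y_j-y_i$ has a normal component of order $\|h\|^2$, so the cross-over produces an error of order $\|h\|^4/\delta^2$.

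A concrete example: take $d=1$, $m=2$, $f(t)=(t,t^2)$, $x_i=0$, $x_j=h$, with population covariances about $y_i$ and $y_j$. Then $C_i=\mathrm{diag}(\delta^2/3,\delta^4/5)$, and a direct computation (also using $C_j$) gives
\[
\frac{\delta^2}{3}\,d_M^2(y_j,y_i)=h^2+\frac{5h^4}{3\delta^2},
\]
which is not $h^2+O(h^4)$ as $\delta\to0$. This also conflicts with your second subtlety. The tangential error forces $\delta\lesssim\|h\|$, but shrinking $\delta$ makes the normal error blow up, so ``$\delta$ sufficiently small'' cannot absorb it.

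\textbf{The fix.} What is missing is an explicit convention that makes the covariance effectively rank $d$. One option is to read $\mathcal{Q}_{y_i,\delta}$ as the genuine ellipsoid $y_i+J_i\bigl(B(0,\delta)\bigr)$, as the statement's wording suggests. The other is to replace $C_i^\dagger$ by the pseudoinverse of the best rank-$d$ approximation $[C_i]_d$; this is the truncation the paper itself uses in its numerical pseudo-inverse construction. In the truncated case:
\begin{itemize}
\item Weyl's inequality gives $\bigl\|[C_i]_d-\frac{\delta^2}{d+2}J_iJ_i^T\bigr\|_2=O(\delta^4)$, with both matrices of rank exactly $d$ and smallest positive eigenvalue $\gtrsim\delta^2$.
\item \Cref{pseudo inverse lemma} then yields $[C_i]_d^\dagger=\frac{d+2}{\delta^2}\bigl[(J_iJ_i^T)^\dagger+O(\delta^2)\bigr]$.
\item Your Step 2 then finishes the proof, with total error $O(\|h\|^4)+O(\delta^2\|h\|^2)$.
\end{itemize}
For $m=d$ your argument already works as written. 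Note that your Step 1 gives a relative error of $O(\delta^2)$, not $O(\delta)$, so $\delta\lesssim\|h\|$ suffices.

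\textbf{Comparison of routes.} The paper gives no proof of its own and only cites \cite{singer08}. That argument, as the paper's remark indicates, expands $f^{-1}$ about $y_i$ and $y_j$ and averages so the cubic terms cancel. Your forward midpoint expansion with the explicit formula for $(J_iJ_i^T)^\dagger$ is an equivalent symmetrization that avoids differentiating the inverse along $\mathcal{M}_Y$.
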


\begin{remark}\label{remark:singer_special}
In the special case where $f$ is an affine transformation, the approximation in \Cref{singer's result} becomes exact. The proof in \cite{singer08} shows that the error term in the approximation arises from higher-order Taylor terms of the inverse map. When $f$ is affine, these higher-order terms vanish. Hence
\begin{equation*}
    \|x_j-x_i\|^2
    =
    \frac{\delta^2}{d+2} d_M^2(y_j,y_i).
\end{equation*}
\end{remark}

\subsection{Optimal transport and Wasserstein space}
In this paper, we work with probability measures and aim to generalize the symmetric locally centered Mahalanobis distance to Wasserstein space using the optimal transport (OT) framework. We introduce some basic definitions needed for the Wasserstein space and refer the reader to standard references \cite{peyre19,Villani2009} for further background.

Let $\Pmeas(\mathbb{R}^n)$ denote the set of Borel probability measures on $\mathbb{R}^n$. We consider the space of probability measures with finite second moment,
\begin{equation*}
\Pmeas_2(\mathbb{R}^n)
=
\left\{
\mu\in\Pmeas(\mathbb{R}^n)
:
\int_{\mathbb{R}^n}\|x\|^2 d\mu(x)<\infty
\right\}.
\end{equation*}
Let $\alpha,\beta\in\Pmeas_2(\mathbb{R}^n)$. The quadratic Monge formulation defines the 2-Wasserstein distance by
\begin{equation}\label{eq:W2_monge}
W_2^2(\alpha,\beta)
=
\min_{T: T_\sharp\alpha=\beta}
\int_{\mathbb{R}^n}\|T(x)-x\|^2 d\alpha(x),
\end{equation}
where $T:\mathbb{R}^n\to\mathbb{R}^n$ is measurable and $T_\sharp\alpha$ denotes the push-forward of $\alpha$ by $T$, defined by
\begin{equation*}
    T_\sharp\alpha(A)=\alpha(T^{-1}(A)),
\end{equation*}
for every measurable set $A\subseteq\mathbb{R}^n$. More general cost function, for example,  $\|\cdot\|^p$,  can be used in \eqref{eq:W2_monge} to define a Wasserstein-type distance.  In  this paper, we focus on the \emph{Wasserstein space} $\wassspace$. When $\alpha$ is absolutely continuous, \eqref{eq:W2_monge} admits a unique minimizer $T_\alpha^\beta$ (up to an additive constant) \cite[Theorem~2.12]{villani2003topics}. This minimizer is called the optimal transport map from $\alpha$ to $\beta$, and in this case $T_\alpha^\beta\in L^2(\alpha)$ (here and throughout, all $L^2$ spaces are understood to be over $\mathbb R^n$). In this paper, we work with the Monge formulation only (as opposed to the \emph{Kantorovich formulation} \cite{Villani2009}), meaning that  we assume enough regularity of the measures such that optimal transport maps exist.

We define the associated OT displacement function from $\alpha$ to $\beta$ by
\begin{equation}\label{eq:displacement_def}
g_\alpha^\beta = T_\alpha^\beta-\mathrm{id},
\end{equation}
where $\mathrm{id}$ denotes the identity map on $\mathbb{R}^n$.
The Wasserstein space $\wassspace$ has more structure than a general metric space: it has a formal Riemannian structure, following Otto's interpretation \cite{otto2000geometry}. In this geometric view, displacement functions \eqref{eq:displacement_def} represent directions of motion along Wasserstein geodesics. The OT displacement $g_\alpha^\beta$ is interpreted as the tangent vector at $\alpha$ pointing toward $\beta$ (\Cref{fig:tangent vector}).
\begin{figure}[H]
    \centering
    \includegraphics[width=0.3\linewidth]{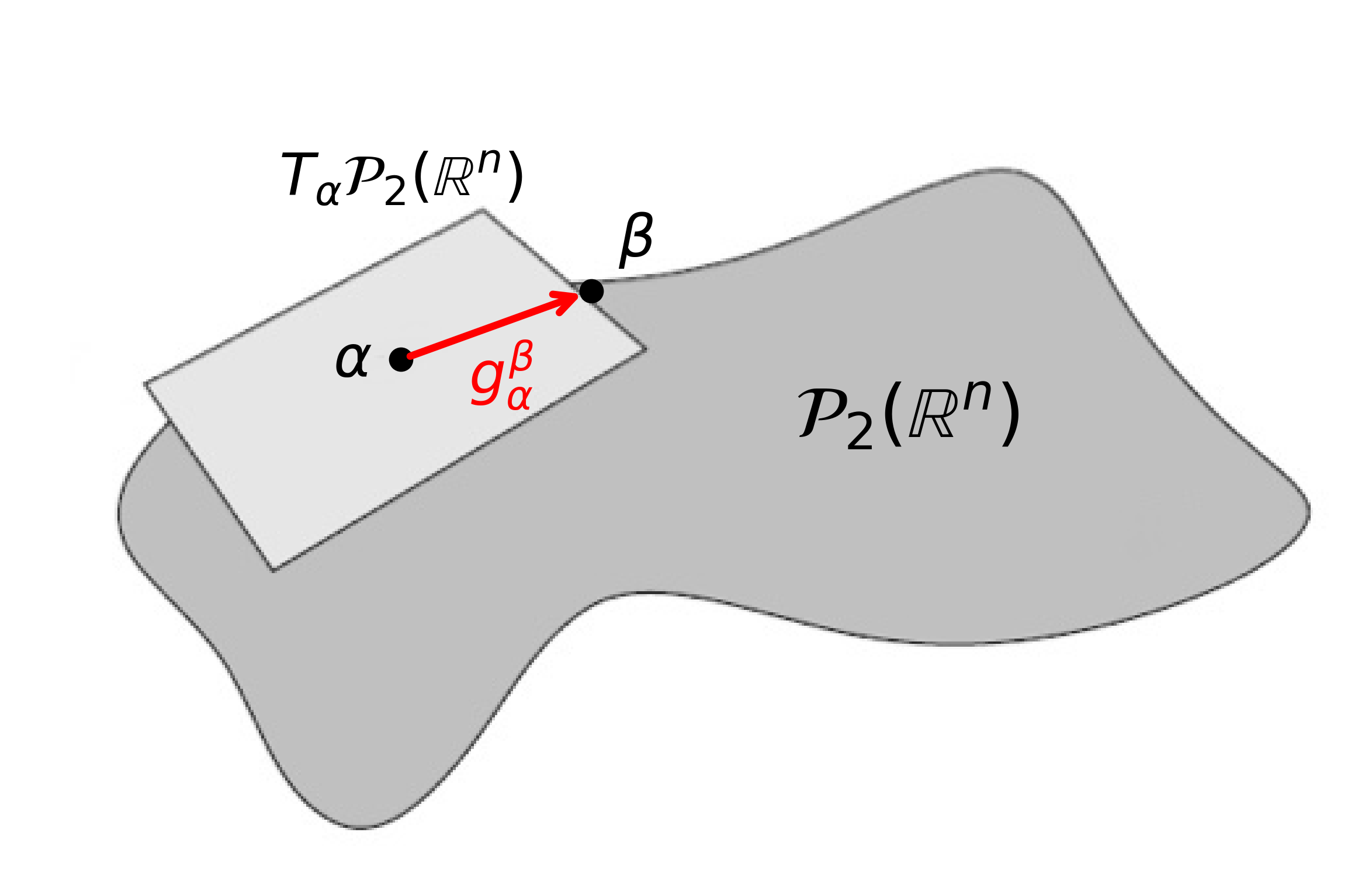}
    \caption{OT displacement function $g_\alpha^\beta = T_\alpha^\beta-\mathrm{id}$ viewed as a tangent vector in $T_\alpha \mathcal P_2(\mathbb R^n)$, the tangent space at $\alpha$.}
    \label{fig:tangent vector}
\end{figure}

\section{Mahalanobis distance in Wasserstein spaces}\label{sec:Mahalanobis distance in Wasserstein spaces}

In \Cref{sec:preliminaries:mahalanobis}, the Mahalanobis distance $d_M$ \eqref{symetric locally centered Mahalanobis distance} in the Euclidean space,
where the distance $x_i-x_j$ is measured relative to the local covariance matrices
$C_i$ and $C_j$ built from neighborhoods $N_{x_i}$ and $N_{x_j}$. In the Wasserstein setting, ``points'' become measures in $\mathcal P_2(\mathbb R^n)$, and the natural analog
of a displacement vector is the OT displacement function \eqref{eq:displacement_def}. Consequently, the form of an empirical covariance matrix will become a finite rank operator
acting on the displacement function in the appropriate $L^2$-space.

Throughout this section, we work in the Wasserstein space $\wassspace$ and introduce a Wasserstein version of Mahalanobis distance, denoted by $\dwm$. Let $\alpha\in\mathcal P_2(\mathbb R^n)$, and $\{\alpha_i\}_{i=1}^K\subset\mathcal P_2(\mathbb R^n)$
be a set of measures in the small neighborhood of $\alpha$, where neighborhoods are understood with respect to the topology of the 2-Wasserstein distance $W_2$. For each neighbor measure $\alpha_i$, denote the OT displacement from $\alpha$ to $\alpha_i$ by 
\begin{equation}\label{eq:displacement_mu_mui}
g_\alpha^{\alpha_i}=T_\alpha^{\alpha_i}-\mathrm{id}\in L^2(\alpha),
\qquad i=1,\dots,K.
\end{equation}
We collect these local displacement functions 
\begin{equation}\label{eq:def:displacement set}
\Nalpha = \{g_\alpha^{\alpha_i}\}_{i=1}^K \subset L^2(\alpha).
\end{equation}
Similarly to $N_{x_j}$ \eqref{euclidean neiboorhood} is used to construct the empirical covariance matrix \eqref{local coavriance matrix},
the set $\Nalpha$ will be used to describe the local structure around $\alpha$ and to construct an empirical covariance operator.
To define such an operator, we use the $L^2(\alpha)$-inner product,
\begin{equation*}
\langle \psi,\varphi\rangle_\alpha = \int_{\mathbb R^n} \psi(x)\cdot \varphi(x)  d\alpha(x), \qquad \psi,\varphi \in L^2(\alpha),
\end{equation*}
where $\cdot$ denotes the Euclidean inner product on $\mathbb R^n$. Similar to the construction in \cite{Hamm2025ManifoldLearningW2}, we define the empirical covariance operator $\FNa:L^2(\alpha)\to L^2(\alpha)$ by
\begin{equation}\label{eq:FValpha_def}
\FNa
=
\frac{1}{K}\sum_{i=1}^K g_\alpha^{\alpha_i}  \langle g_\alpha^{\alpha_i},\cdot\rangle_\alpha.
\end{equation}
\begin{remark}
$\FNa$ is finite-rank. Moreover, 
   every self-adjoint finite-rank linear operator in a Hilbert space can be expressed in the form of \eqref{eq:FValpha_def}. 
\end{remark}
As in the Euclidean case,
$\FNa$ may not be invertible (for example, when the displacements in $\Nalpha$ lie in a lower-dimensional subspace). Therefore, we work with its pseudo-inverse.
To make the pseudo-inverse explicit, form the symmetric positive semi-definite matrix $B\in\mathbb R^{K\times K}$ with entries
\begin{equation}\label{eq:B_def}
B_{ij}=\big\langle \FNa g_\alpha^{\alpha_j},  g_\alpha^{\alpha_i}\big\rangle_\alpha,
\qquad i,j=1,\dots,K,
\end{equation}
and denote by $B^\dagger\in\mathbb R^{K\times K}$ the  pseudo-inverse of $B$.
Then the pseudo-inverse operator $\FdagNa$ can be defined as
\begin{equation}\label{eq:Fdagger_Nalpha}
\FdagNa
=
\sum_{i=1}^K\sum_{j=1}^K B^\dagger_{ij}  g_\alpha^{\alpha_i} \langle g_\alpha^{\alpha_j},\cdot\rangle_\alpha,
\end{equation}
where $B^\dagger_{ij}$ denotes the $(i,j)$ entry of $B^\dagger$.

\begin{remark}

By choosing an orthonormal basis $U = \{u_i\}_{i=1}^p \subseteq L^2(\alpha)$, $p\leq K$, for the subspace spanned by $\Nalpha$, $\FdagNa$ \eqref{eq:Fdagger_Nalpha} can equivalently be defined by \begin{equation}\label{eq:Finv_U}
    F^\dagger_{U} = \sum_{i=1}^p \sum_{j=1}^p A^{-1}_{ij} u_i \langle u_j, \cdot \rangle_\alpha,
\end{equation}
where 
\begin{equation}\label{eq:Def_Aij}
    A_{ij} = \langle \FNa u_j, u_i \rangle_\alpha,
\end{equation}
is symmetric positive definite.
\end{remark}

\begin{remark}
The finite-rank operator $\FdagNa$ defined on $L^2(\alpha)$ has properties analogous to its matrix counterparts on
$\mathbb R^n$. In particular $\FdagNa$ behaves like the Moore--Penrose pseudo-inverse matrix appearing in the Euclidean Mahalanobis distance \eqref{symetric locally centered Mahalanobis distance}. By standard arguments, it is invariant under change of basis, satisfies the Moore pseudo-inverse properties, and is also the solution to the least squares problem; see, for example, \cite[Section~2]{Groetsch1977inverse}.
\end{remark}

We now define the Mahalanobis distance between two measures.
\begin{definition}[Wasserstein Mahalanobis distance]\label{def:Wass-Maha}
 Let $\alpha,\beta\in\mathcal P_2(\mathbb R^n)$, and let $\{\alpha_i\}_{i=1}^{N}$ and $\{\beta_j\}_{j=1}^{K}$
be neighborhood measures near $\alpha$ and $\beta$, respectively. Construct $\Nalpha$ \eqref{eq:def:displacement set}, $\FNa$ \eqref{eq:FValpha_def}, and $\FdagNa$ \eqref{eq:Fdagger_Nalpha}
as above (and similarly, do so for $\Nbeta$, $\FNb$, and $\FdagNb$). Let $g_\alpha^\beta=T_\alpha^\beta-\mathrm{id}\in L^2(\alpha)$. Then the \emph{Wasserstein Mahalanobis distance} $\dwm$ is defined by

\begin{equation}\label{Mahalanobis in Wasserstein}
\dwm^2(\alpha, \beta)
=
\frac{1}{2}\left(
\langle g_\alpha^\beta, \FdagNa g_\alpha^\beta \rangle_\alpha
+
\langle g_\beta^\alpha, \FdagNb g_\beta^\alpha \rangle_\beta
\right).
\end{equation}   
\end{definition}

\Cref{def:Wass-Maha} is designed to mirror the Euclidean Mahalanobis distance $d_M$ \eqref{symetric locally centered Mahalanobis distance}, with local covariance matrices replaced by
finite-rank covariance operators built from OT displacement functions. We refer to $\dwm$ as a Wasserstein Mahalanobis distance, although, for the same reason as $d_M$, it should be understood as a semimetric rather than a genuine metric. A key motivation for introducing the Wasserstein Mahalanobis distance $\dwm$
\eqref{Mahalanobis in Wasserstein} is the same one that motivates $d_M$ in Euclidean space, which is to build a distance that can recover meaningful latent information from transformed data. We explore this
motivation in the next subsection through several special approximation results for Gaussian data.

\section{Approximation results on a Gaussian submanifold}\label{Special approximation results}
In the Euclidean setting, \Cref{singer's result} shows that when we have observations
$y_i=f(x_i)$ from an unknown transformation $f$ of latent points $\{x_i\}$, $d_M(y_i,y_j)$ can approximate $\|x_i-x_j\|$ under certain local
assumptions. In this section, we consider a similar approximation result in the Wasserstein space. Here, each latent parameter is associated with a probability measure, and the observed objects are pushforwards of these measures under an unknown map. The question is whether $\dwm$ between the observed pushforward measures can recover the Euclidean distance between the underlying latent parameters. We set up the problem as follows.

For a positive definite matrix $\Sigma$, we consider the submanifold
\begin{equation}\label{eq:Gaussian_sub}
    \mathcal{M}_{\Sigma} = \{\mathcal{N}(m,\Sigma): m\in \mathbb{R}^n\}
\end{equation}
in the Wasserstein space. We are motivated by \cite{CarlenGangbo2003} to study the Wasserstein Mahalanobis distance on a Gaussian submanifold. 

$\mathcal{M}_{\Sigma}$ naturally carries the Euclidean geometry:
$W_2(\mathcal{N}(m_i,\Sigma),\mathcal{N}(m_j,\Sigma)) = \|m_i-m_j\|$. Our goal is to show that $\dwm(f_{\sharp}\alpha,f_{\sharp}\beta) \approx d_M(f(m_{\alpha}),f(m_{\beta})),$
where $\alpha,\beta \in \mathcal{M}_{\Sigma}$ with means $m_{\alpha}$ and $m_{\beta}$, respectively.

\subsection{Affine transformation}

We first consider the case where $f$ is an affine map. This choice is motivated by the fact affine maps preserve Gaussian structure, so
$f_\sharp\alpha$ and $f_\sharp\beta$ remain Gaussian, thus
$\dwm (f_\sharp \alpha, f_\sharp \beta)$ can be computed explicitly.

\begin{theorem}\label{Gaussian Linear Theorem}
    Consider an affine function $f: \mathbb{R}^n \to \mathbb{R}^n$. Let 
$\alpha = \mathcal{N}(m_\alpha, \Sigma)$ and $\beta = \mathcal{N}(m_\beta, \Sigma)$ be Gaussian distributions on $\mathbb{R}^n$. Let $\alpha_i=\mathcal N(m_{\alpha_i},\Sigma)$, $i=1,\dots,N$ and $\beta_j=\mathcal N(m_{\beta_j},\Sigma)$, $j=1,\dots,K$, where the means $m_{\alpha_i}$, $m_{\beta_j}$ are uniformly distributed in ${B}(m_\alpha,\delta)$ and ${B}(m_\beta,\delta)$, respectively. Then
 \begin{equation*}
          d_M(f(m_\alpha),f(m_\beta))   = \dwm(f_{\sharp}\alpha,f_{\sharp}\beta),
 \end{equation*}
 where the distances are computed with respect to the neighborhoods $\{{f(m_{\alpha_i})}\}_{j=1}^N,\{{f(m_{\beta_i})}\}_{j=1}^K$ and $\{{f_\sharp}\alpha_i\}_{i=1}^N,\{{f_\sharp}\beta_j\}_{j=1}^K$, respectively.
\end{theorem}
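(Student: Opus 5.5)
The plan is to reduce everything to finite-dimensional linear algebra. Under an affine map, all pushforward measures stay Gaussian with a common covariance. The OT displacement fields are then \emph{constant} vector fields, and $L^2$ inner products of constants reduce to Euclidean dot products.

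\textbf{Step 1: the pushforward measures.} Write $f(x)=Ax+b$. I will assume $A$ is invertible, as is implicit in the requirement that OT maps exist. Then $f_\sharp\mathcal N(m,\Sigma)=\mathcal N(Am+b,\,A\Sigma A^T)$. Hence $f_\sharp\alpha$, $f_\sharp\beta$, $f_\sharp\alpha_i$ and $f_\sharp\beta_j$ all share the covariance $\Sigma'=A\Sigma A^T$ and have means $f(m_\alpha)$, $f(m_\beta)$, $f(m_{\alpha_i})$ and $f(m_{\beta_j})$.

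\textbf{Step 2: the displacement fields are constants.} For two Gaussians with equal covariance, the optimal map is the translation $T(x)=x+(m'-m)$. This translation is the gradient of the convex function $\tfrac12\|x\|^2+(m'-m)\cdot x$ and pushes one measure to the other, so it is optimal by Brenier. Consequently
\[
g_{f_\sharp\alpha}^{f_\sharp\alpha_i}\equiv v_i:=f(m_{\alpha_i})-f(m_\alpha),
\qquad
g_{f_\sharp\alpha}^{f_\sharp\beta}\equiv \Delta:=f(m_\beta)-f(m_\alpha),
\]
and symmetrically at $f_\sharp\beta$ with the vectors $w_j$ and $-\Delta$.

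\textbf{Step 3: identify the covariance operator with a matrix.} Let $J:\mathbb R^n\to L^2(f_\sharp\alpha)$ send $c$ to the constant field $c$. Since $f_\sharp\alpha$ is a probability measure, $\langle Jc,Jd\rangle=c\cdot d$, so $J$ is an isometry. Its adjoint is $J^*\varphi=\int\varphi\,d(f_\sharp\alpha)$. Because $\langle v_i,\varphi\rangle=v_i\cdot J^*\varphi$, we get
\[
F_{N_{f_\sharp\alpha}}=J\,C\,J^*,\qquad C=\tfrac1N\sum_i v_iv_i^T,
\]
and $C$ is exactly the Euclidean local covariance matrix at $f(m_\alpha)$ built from the neighbors $f(m_{\alpha_i})$. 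I would flag a normalization mismatch here: \eqref{local coavriance matrix} uses $1/(K-1)$ while \eqref{eq:FValpha_def} uses $1/K$. The identity holds exactly only when both use the same normalization; otherwise the two sides differ by a fixed factor. I would adopt a common normalization in the proof.

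\textbf{Step 4: the pseudo-inverse (main obstacle).} This step is where care is needed. The preceding remark states that $F^\dagger_{N_\alpha}$ defined via $B^\dagger$ in \eqref{eq:Fdagger_Nalpha} is the Moore--Penrose pseudo-inverse of $F_{N_\alpha}$. Granting that, the pseudo-inverse of $JCJ^*$ with $J$ an isometry is $JC^\dagger J^*$. To see this, verify the four Penrose identities using $J^*J=I$, and note that $JC^\dagger J^*$ is self-adjoint. If one prefers not to rely on the remark, a direct check works instead. With $V=[v_1,\dots,v_N]$, one has $B=\tfrac1N V^TV V^TV$. Substituting this into \eqref{eq:Fdagger_Nalpha} gives $J\,V B^\dagger V^T J^*$, and the SVD of $V$ shows that $VB^\dagger V^T=(\tfrac1N VV^T)^\dagger=C^\dagger$.

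\textbf{Step 5: conclude.} From Step 4,
\[
\langle \Delta, F^\dagger\Delta\rangle_{f_\sharp\alpha}=\Delta^T C_\alpha^\dagger\Delta,
\]
and similarly at $f_\sharp\beta$ we get $(-\Delta)^T C_\beta^\dagger(-\Delta)$. Averaging the two terms reproduces \eqref{symetric locally centered Mahalanobis distance} for the points $f(m_\alpha)$ and $f(m_\beta)$. This gives $\dwm(f_\sharp\alpha,f_\sharp\beta)=d_M(f(m_\alpha),f(m_\beta))$. The uniform sampling of the means plays no role in this exact identity.
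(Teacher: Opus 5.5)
Your proposal is correct and is essentially the paper's proof: the paper likewise reduces to constant translation displacements, computes $B=\frac{1}{K}(V^\top V)^2$, and uses the SVD identity $VB^\dagger V^\top=(\frac{1}{K}VV^\top)^\dagger$, with your isometry $J$ simply repackaging the paper's factor $\int\phi\,d(f_\sharp\beta)$. Your normalization flag is well taken: the paper's proof silently uses the $1/K$ convention of \eqref{eq:emp_cov_beta} rather than the $1/(K-1)$ of \eqref{local coavriance matrix}. Your invertibility assumption on $A$ is harmless but not needed, since a translation is optimal between equal-covariance Gaussians even when they are degenerate.
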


\begin{proof}
Since $f$ is an affine transformation, $f_\sharp$ applied to Gaussian distributions will produce Gaussian distributions. Therefore, one can obtain a closed form of the OT displacement functions. The result therefore follows from direct computations. See \Cref{Proof: Gaussian linear theorem} for details. 
\end{proof}

\begin{remark}\label{remark:affine_mahalanobis_and_euclidean}
Consider the same setup as in \Cref{Gaussian Linear Theorem}. Then \Cref{remark:singer_special} implies
    \begin{equation}
         \|m_\alpha - m_\beta\|^2 = \frac{\delta^2}{n+2}\dwm^2({{f}}_{\sharp}\alpha,{{f}}_{\sharp}\beta).
    \end{equation}
This means that in the setting of affine functions, the Wasserstein Mahalanobis distance exactly recovers the means that were used to create the latent Gaussian data.
\end{remark}

\subsection{General transformations}\label{sec:nonlinear transformation}

We now move beyond the affine case and consider a general map $f:\R^n\to\R^n$, which may not preserve
Gaussian structure. As a consequence, the OT displacement $g_{f_\sharp\alpha}^{f_\sharp\beta}$ generally does not have a closed form, and therefore, the explicit calculation of $\dwm(f_\sharp\alpha,f_\sharp\beta)$ is difficult. 

Our goal is to prove the main theorem (\Cref{theorem:main}), which shows an approximation of $\dwm$ by $d_M$ up to second order on the Gaussian submanifold $\mathcal{M}_{\Sigma}$. The theorem builds on four technical assumptions, which are discussed in more detail in \Cref{sec:tech_assump}.

\begin{theorem}[Main Result]\label{theorem:main}
     Let $\alpha = \mathcal{N}(m_\alpha, \Sigma)$, $\beta = \mathcal{N}(m_\beta, \Sigma)$ be Gaussian distributions on $\mathbb R^n$. For $\delta>0$ let 
 \begin{equation*}
     \alpha_i=\mathcal N(m_{\alpha_i},\Sigma),
\qquad i=1,\dots,N,
 \end{equation*}
and
\begin{equation*}
    \beta_j=\mathcal N(m_{\beta_j},\Sigma),
\qquad j=1,\dots,K,
\end{equation*}
where the means $m_{\alpha_i}$ and $m_{\beta_j}$  are  uniformly distributed in ${B}(m_\alpha,\delta)$ and ${B}(m_\beta,\delta)$,  respectively. Let $f:\mathbb{R}^n\to\mathbb{R}^n$ satisfy \Cref{assump:regularity}, and let $N$, $K$, and $\delta$ satisfy \Cref{Assumption: empirical sampling}. Denote by $\tilde f^{m_\beta}$ and $\tilde f^{m_\alpha}$ the linear approximations of $f$ at $m_\beta$ and $m_\alpha$, respectively, i.e.\
\begin{equation*}
\tilde f^{m_\beta}(x)
=
f(m_\beta)+J_f(m_\beta)(x-m_\beta) \quad \text{and}
\quad
\tilde f^{m_\alpha}(x)
=
f(m_\alpha)+J_f(m_\alpha)(x-m_\alpha).
\end{equation*}
Suppose that there exists $p\geq 2$ such that \Cref{displacement function assumption} holds for $\alpha$ and $\beta$ with $r_\alpha, r_\beta \geq \max\{\|m_\alpha - m_\beta\|, \delta\}$. 
If $p=2$, suppose in addition that  \Cref{assuption:dispacement_local} holds for both $\alpha$ and $\beta$. Then
\begin{align*}
\dwm^2({{f}}_{\sharp}\alpha,{{f}}_{\sharp}\beta) 
 = & d_M^2(f(m_\alpha),f(m_\beta))  + O \left(\frac{ \|m_\beta-m_\alpha\|^{2}}{\delta^{3-\frac{p}{2}}}\right)\\&
+
O\left(\frac{\|m_\alpha - m_\beta\|^2}{\delta^2}\left[H(\tilde f^{m_\beta}_\sharp\beta,\ f_\sharp\beta)+H(\tilde f^{m_\alpha}_\sharp\alpha,\ f_\sharp\alpha) \right]\right),
\end{align*}
where the distances are constructed
 with respect to the neighborhoods $\{{f_\sharp}\alpha_i\}_{i=1}^N,\{{f_\sharp}\beta_j\}_{j=1}^K$ and $\{{f(m_{\alpha_i})}\}_{i=1}^N,\{{f(m_{\beta_j})}\}_{j=1}^K$. Here,
 $H$ denotes the Hellinger distance; see \Cref{def:hellinger_standard}.
\end{theorem}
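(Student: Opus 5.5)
The plan is to compare the nonlinear quantity $\dwm^2(f_\sharp\alpha,f_\sharp\beta)$ with the same quantity computed for the affine surrogates, and then invoke \Cref{Gaussian Linear Theorem}. Since $\dwm^2$ is a sum of two symmetric half-terms, I treat only the term at $f_\sharp\beta$; the term at $f_\sharp\alpha$ is identical with roles exchanged. For the affine map $\tilde f^{m_\beta}$ everything is explicit. The pushforwards are Gaussians with common covariance $J\Sigma J^T$, where $J=J_f(m_\beta)$. The displacement from $\tilde f^{m_\beta}_\sharp\beta$ to $\tilde f^{m_\beta}_\sharp\beta_j$ is the constant field $J(m_{\beta_j}-m_\beta)$. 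The covariance operator $F$ therefore acts on constant vector fields exactly like the Euclidean local covariance matrix of $\{\tilde f^{m_\beta}(m_{\beta_j})\}$. Combined with \Cref{Gaussian Linear Theorem}, the affine surrogate term equals $(f(m_\alpha)-f(m_\beta))^T \tilde C_\beta^{\dagger}(f(m_\alpha)-f(m_\beta))$ up to replacing $f$ by its linearization. The Euclidean quantities $d_M^2$ computed with $f$ and with $\tilde f^{m_\beta}$ differ by $O(\|m_\alpha-m_\beta\|^2\delta^{-1})$-type terms, from a Taylor expansion as in \Cref{singer's result}. These are absorbed into the first error term because $\delta^{-1}\le \delta^{-(3-p/2)}$ for small $\delta$ and $p\ge 2$.

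The core step is a perturbation estimate for the quadratic form $\langle g,F^\dagger_{N}g\rangle$ under perturbation of both the displacement $g$ and the neighborhood displacements. I would write the true displacements as $g_{f_\sharp\beta}^{f_\sharp\beta_j}=J(m_{\beta_j}-m_\beta)+e_j$. By \Cref{displacement function assumption}, the remainders satisfy $\|e_j\|_{L^2}=O(\|m_{\beta_j}-m_\beta\|^{p/2+\cdots})$, with the order $p$ quantifying how close the OT displacement is to the linearized one within radius $r_\beta$. The same decomposition applies to $g_{f_\sharp\beta}^{f_\sharp\alpha}$, which is admissible since $r_\beta\ge\|m_\alpha-m_\beta\|$. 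A further issue is that these functions live in $L^2(f_\sharp\beta)$ rather than $L^2(\tilde f^{m_\beta}_\sharp\beta)$. I would pass between the two inner products using $|\int \phi\,d\mu-\int\phi\,d\nu|\lesssim \|\phi\|\,H(\mu,\nu)$ for suitably bounded integrands; this is where the Hellinger term $H(\tilde f^{m_\beta}_\sharp\beta,f_\sharp\beta)$ enters. Its factor $\|m_\alpha-m_\beta\|^2/\delta^2$ arises because $\|g\|^2\sim\|m_\alpha-m_\beta\|^2$ and $\|F^\dagger\|\sim\delta^{-2}$.

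Next, I would use the $B$-matrix representation \eqref{eq:B_def}–\eqref{eq:Fdagger_Nalpha} (or the $A$ form \eqref{eq:Finv_U}). This reduces the problem to perturbation of the pseudo-inverse of a finite Gram-type matrix. \Cref{Assumption: empirical sampling} guarantees that the empirical covariance of the sampled means is comparable to $\frac{\delta^2}{n+2}I$, so its smallest nonzero eigenvalue is of order $\delta^2$. For $B+E$ with $\|E\|\ll\delta^2$, I would use the standard bound $\|(B+E)^\dagger-B^\dagger\|\lesssim\|B^\dagger\|^2\|E\|$, valid under a rank-preservation condition. With $\|E\|\lesssim\delta\cdot\delta^{p/2}$-type cross terms, this yields the relative error $\delta^{-(3-p/2)}$ multiplying $\|m_\alpha-m_\beta\|^2$.

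The main obstacle is the rank condition. The true displacements $e_j$ may span directions outside the $n$-dimensional space of constant fields, so the pseudo-inverse can blow up: tiny eigenvalues appear from the $e_j$ components, and $F^\dagger$ is discontinuous. For $p>2$, I expect the sampling assumption and the size of the $e_j$ to keep $g$'s projection onto those spurious directions negligible. For $p=2$, the perturbation is of the same order as $\delta^2$, and I would need \Cref{assuption:dispacement_local} to ensure that the neighborhood displacements span a space containing $g_{f_\sharp\beta}^{f_\sharp\alpha}$ up to a controlled residual. The approach is to first project $g$ onto $\mathrm{span}\,N_\beta$ and then control the resulting quadratic form with the perturbation bound restricted to that span.
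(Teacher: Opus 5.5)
Your overall route is the paper's. You linearize $f$ at $m_\beta$, reduce the affine surrogate term to $(J_f(m_\beta)(m_\alpha-m_\beta))^\top M^\dagger_{N_{\tilde f^{m_\beta}_\sharp\beta}}(J_f(m_\beta)(m_\alpha-m_\beta))$, and compare it with $d_M^2$ at cost $O(\|m_\alpha-m_\beta\|^2/\delta)$. You then write the true neighbor displacements as constant fields plus remainders controlled by \Cref{displacement function assumption}, perturb the pseudo-inverse with a Stewart-type bound, and pay a Hellinger price to move between $L^2(f_\sharp\beta)$ and $L^2(\tilde f^{m_\beta}_\sharp\beta)$.

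The gap is the step you flag yourself and leave open. A bound of the form $\|(F+E)^\dagger-F^\dagger\|\lesssim\|F^\dagger\|^2\|E\|$ needs an a priori bound $\|F^\dagger_{N_{f_\sharp\beta}}\|_{op}=O(\delta^{-2})$ on the \emph{perturbed} pseudo-inverse. In the paper's coefficient form this is $\|B^\dagger\|_2=O(\delta^{-4})$, and your proposed remedies do not supply it.
\begin{itemize}
\item Projecting $g^{f_\sharp\alpha}_{f_\sharp\beta}$ onto $\mathrm{span}\,N_{f_\sharp\beta}$ changes nothing. $F^\dagger_{N_{f_\sharp\beta}}$ already vanishes on the orthogonal complement of that span and maps into it.
\item The danger lies \emph{inside} the span, along extra eigendirections generated by the non-constant remainders $e_j$. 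Their eigenvalues are small but have no lower bound, so a small component of $g$ along such a direction, divided by an uncontrolled eigenvalue, need not be small. This also undercuts your $p>2$ hope.
\item \Cref{assuption:dispacement_local} is not a spanning condition. Its threshold on $c_\xi$ is calibrated so that $\|B-C\|_2\|C^\dagger\|_2<1$ when $p=2$; for $p>2$ this product is already $O(\delta^{p/2-1})$. Via Weyl's inequality this yields $\sigma_r(B)\ge(1-\epsilon)\sigma_r(C)$ for $r=\mathrm{rank}\,C\le n$.
\end{itemize}

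Weyl's inequality protects only those $r$ leading singular values and says nothing about additional nonzero singular values of $B$. The paper handles this with an extra hypothesis, stated only in a footnote in the proof of \Cref{lem:BC_comparison}: $B$ is assumed to have the same rank as $C$. That is what gives $\|B^\dagger\|_2=1/\sigma_r(B)=O(\delta^{-4})$. After that, \Cref{pseudo inverse lemma} yields $\|B^\dagger-C^\dagger\|_2=O(\delta^{p/2-5})$, and the bookkeeping you describe produces the stated rates.

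None of the listed assumptions implies this rank condition. Generically the nonlinear displacements span many more than $n$ directions, and $g^{f_\sharp\alpha}_{f_\sharp\beta}$ has nonzero components along the higher-order ones. Second-order directions alone can plausibly contribute terms of order $\|m_\alpha-m_\beta\|^4/\delta^4$, which the stated error terms do not dominate once $\|m_\alpha-m_\beta\|\gtrsim\delta$. So to complete your argument you must adopt the equal-rank hypothesis explicitly, or replace $F^\dagger$ by a rank-$n$ truncated pseudo-inverse, as the paper's numerical algorithm does.

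Two smaller points:
\begin{itemize}
\item \Cref{displacement function assumption} gives exactly $\|e_j\|_{L^2(f_\sharp\beta)}=O(\|m_{\beta_j}-m_\beta\|^{p/2})$, with nothing further in the exponent.
\item Absorbing $O(\|m_\alpha-m_\beta\|^2/\delta)$ into $O(\|m_\alpha-m_\beta\|^2\delta^{p/2-3})$ requires $p\le 4$, not merely $p\ge 2$. The paper's final step makes the same tacit absorption.
\end{itemize}
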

\begin{remark}
The details of the required assumptions are discussed in \Cref{sec:tech_assump}. In summary,
\Cref{assump:regularity,Assumption: empirical sampling} in \Cref{theorem:main} are basic regularity and sampling requirements. While the displacement conditions in \Cref{displacement function assumption,assuption:dispacement_local} are more restrictive, we demonstrate in \Cref{sec:tech_assump} that they can be satisfied in nontrivial settings.
\end{remark}
\begin{remark}\label{remark:dwm_and_euclidean}
    In \Cref{remark: displacement function} we show that \Cref{theorem:main} always holds for $p=2$. In combination with \Cref{singer's result}, this allows for the following conclusion
\begin{equation*}
\|m_\alpha - m_\beta\|^2 =  \frac{\delta^2}{n+2}\dwm^2({{f}}_{\sharp}\alpha,{{f}}_{\sharp}\beta)  + O(\|m_\alpha - m_\beta\|^2).
  \end{equation*}
    We provide numerical results for this identity in \Cref{experiment:3A}.
\end{remark}
We next consider a limiting case $\Sigma\to0_{n\times n}$ of \Cref{remark:dwm_and_euclidean}, in which Gaussian distributions collapse to point masses. In this limit, we expect our approximation to agree with  \Cref{singer's result}.
\begin{corollary}\label{cor:point_mass_limit}
Under the same setting and notation as in \Cref{theorem:main}, we have
\begin{equation*}
\frac{\delta^2}{n+2}
\lim_{\quad\Sigma\to 0_{n\times n}}
\dwm^2(f_\sharp\alpha,f_\sharp\beta)
=
\|m_\alpha-m_\beta\|^2
+
O\left(
\delta\|m_\alpha-m_\beta\|^2
\right)+
O\left(
\|m_\alpha-m_\beta\|^4
\right).
\end{equation*}
\begin{proof}
    As $\Sigma\to 0_{n\times n}$,  the Hellinger terms in \Cref{theorem:main} vanish and \Cref{displacement function assumption} holds with $p=4$. The result then follows from \Cref{theorem:main} and \Cref{singer's result}. See \Cref{Proof:point_mass_limit} for details.
\end{proof}
\end{corollary}
\begin{remark}
    Since $\delta$ can be chosen arbitrarily small, we may take $\delta \leq \|m_\alpha-m_\beta\|^2$. Then $O(\delta\|m_\alpha-m_\beta\|^2)$ is absorbed into $O(\|m_\alpha-m_\beta\|^4)$, and the result agrees with \Cref{singer's result}. A corresponding numerical experiment is presented in \Cref{experiment:3B}.
\end{remark}

\subsubsection{Technical assumptions}\label{sec:tech_assump}
To prove our main result (\Cref{theorem:main}), we need four technical assumptions, which we now discuss.

\begin{assumption}\label{assump:regularity}
    $f:\mathbb{R}^n\to\mathbb{R}^n$ is a smooth diffeomorphism  with uniformly bounded first and
second derivatives.
\end{assumption}
\begin{assumption}\label{Assumption: empirical sampling}
Let $f:\mathbb{R}^n\to\mathbb{R}^n$, let $m\in\mathbb{R}^n$ and let $\delta>0$. Let $\{m_{i}\}_{i=1}^K$ be uniformly distributed in $B(m,\delta)$. Define
\begin{equation*}
C_{f(m)}
=
\frac{1}{K}
\sum_{j=1}^{K}
\big(f(m_{j})-f(m)\big)
\big(f(m_{j})-f(m)\big)^\top.
\end{equation*}
Let $X$ be a random variable uniformly distributed in $B(m,\delta)$, and define
\begin{equation*}
\mathrm{Cov}(f(X))
=
\mathbb E\Big[
(f(X)-f(m))
(f(X)-f(m))^\top
\Big].
\end{equation*}
We assume that the number of empirical samples $K$ is always sufficiently large and $\delta$ is small, so the sampling error between the covariance matrices can be arbitrarily small and satisfies
\begin{equation}\label{eq:sampling_error_small_enough}
\big\|C_{f(m)}-\mathrm{Cov}(f(X))\big\|_2
\le \frac{1}{2}\lambda_{\min}^+\big(\mathrm{Cov}(f(X))\big),
\end{equation}
where $\lambda_{\min}^+(\mathrm{Cov}(f(X)))$ denotes the smallest positive eigenvalue of $\mathrm{Cov}(f(X))$. 
\end{assumption}
\begin{assumption}\label{displacement function assumption}
Let $f:\mathbb R^n\to\mathbb R^n$ be differentiable and let $p\geq 2$. For all $\nu=\mathcal N(m_\nu,\Sigma)\in\mathcal M_\Sigma$ \eqref{eq:Gaussian_sub}, there exists $r_\nu>0 $ such that for every $\rho=\mathcal N(m_\rho,\Sigma)\in\mathcal M_\Sigma$
satisfying
\begin{equation*}
\|m_\rho-m_\nu\|\leq r_\nu,
\end{equation*}
we have
\begin{equation}
\label{eq:disp_error_assump}
\left\|
g^{f_\sharp\rho}_{f_\sharp\nu}
-
g^{\tilde f_\sharp\rho}_{\tilde f_\sharp\nu}
\right\|_{L^2(f_\sharp\nu)}^2
= O(
\|m_\rho-m_\nu\|^p),
\end{equation}
where $\tilde f$ denotes the linear approximation of $f$ at $m_\nu$ \eqref{eq:affine-approx}.
\end{assumption}

The next Lemma shows that the displacement error bound \eqref{eq:disp_error_assump} always holds with exponent $p=2$ when $f$ satisfy  \Cref{assump:regularity}.
\begin{lemma}\label{remark: displacement function}
    Using the same notation as in \Cref{displacement function assumption} and assume that $f$ satisfies  \Cref{assump:regularity}, the displacement function error satisfies
    \begin{equation*}
\left\|
g^{f_\sharp\rho}_{f_\sharp\nu}
-
g^{\tilde f_\sharp\rho}_{\tilde f_\sharp\nu}
\right\|_{L^2(f_\sharp\nu)}^2
=
O\left(\|m_\rho-m_\nu\|^2\right).
    \end{equation*}
\end{lemma}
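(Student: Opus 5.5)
The bound follows from the triangle inequality in $L^2(f_\sharp\nu)$: we show separately that each displacement function has $L^2(f_\sharp\nu)$-norm $O(\|m_\rho-m_\nu\|)$. Write $\varepsilon=\|m_\rho-m_\nu\|$. Then
\begin{equation*}
\left\|g^{f_\sharp\rho}_{f_\sharp\nu}-g^{\tilde f_\sharp\rho}_{\tilde f_\sharp\nu}\right\|_{L^2(f_\sharp\nu)}^2
\le 2\left\|g^{f_\sharp\rho}_{f_\sharp\nu}\right\|_{L^2(f_\sharp\nu)}^2+2\left\|g^{\tilde f_\sharp\rho}_{\tilde f_\sharp\nu}\right\|_{L^2(f_\sharp\nu)}^2 ,
\end{equation*}
and it suffices to bound each term by $O(\varepsilon^2)$.

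For the first term, $g^{f_\sharp\rho}_{f_\sharp\nu}=T^{f_\sharp\rho}_{f_\sharp\nu}-\mathrm{id}$ is the optimal displacement. Its squared $L^2(f_\sharp\nu)$-norm is therefore exactly $W_2^2(f_\sharp\nu,f_\sharp\rho)$. The map $f\circ\tau\circ f^{-1}$, with $\tau(x)=x+(m_\rho-m_\nu)$, pushes $f_\sharp\nu$ to $f_\sharp\rho$, so it is an admissible competitor. Since $f$ is a diffeomorphism, this competitor is well defined, and we get
\begin{equation*}
W_2^2(f_\sharp\nu,f_\sharp\rho)\le\int\|f(x+m_\rho-m_\nu)-f(x)\|^2\,d\nu(x)\le \|J_f\|_\infty^2\,\varepsilon^2 .
\end{equation*}
Here the uniform first-derivative bound of \Cref{assump:regularity} and the mean value theorem were used.

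The second term is the delicate one: $g^{\tilde f_\sharp\rho}_{\tilde f_\sharp\nu}$ is the optimal displacement between $\tilde f_\sharp\nu$ and $\tilde f_\sharp\rho$, but it must be measured in $L^2(f_\sharp\nu)$ rather than in $L^2(\tilde f_\sharp\nu)$. I would use the explicit Gaussian structure to handle this mismatch. Since $\tilde f$ is affine with matrix $A=J_f(m_\nu)$, the measures $\tilde f_\sharp\nu$ and $\tilde f_\sharp\rho$ are Gaussians with the same covariance $A\Sigma A^\top$ and means differing by $A(m_\rho-m_\nu)$. Their optimal map is therefore a translation: the displacement $g^{\tilde f_\sharp\rho}_{\tilde f_\sharp\nu}$ is the constant vector $A(m_\rho-m_\nu)$, as in the computation behind \Cref{Gaussian Linear Theorem}. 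If $A$ is singular the pushforwards degenerate, but \Cref{assump:regularity} gives invertibility of $J_f$. A constant function has the same norm in every $L^2$ space of a probability measure, so
\begin{equation*}
\left\|g^{\tilde f_\sharp\rho}_{\tilde f_\sharp\nu}\right\|_{L^2(f_\sharp\nu)}^2=\|A(m_\rho-m_\nu)\|^2\le\|J_f\|_\infty^2\varepsilon^2 .
\end{equation*}
Combining the two bounds gives the claim with constant $4\|J_f\|_\infty^2$, uniformly in $\nu$ and $\rho$ and with any $r_\nu$. The only step requiring care is justifying that the displacement between the affine images is the constant translation, which needs the equal-covariance structure; the remaining steps are direct estimates.
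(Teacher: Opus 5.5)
Your proposal is correct and follows the paper's proof essentially step for step: the bound $\|a-b\|^2\le 2\|a\|^2+2\|b\|^2$, the competitor map $f\circ\tau\circ f^{-1}$ for the nonlinear displacement, and the observation that the linearized displacement is the constant $J_f(m_\nu)(m_\rho-m_\nu)$. The only difference is that you use the mean value inequality with the uniform Jacobian bound, whereas the paper uses a Taylor expansion with an $O(\|m_\rho-m_\nu\|^3)$ remainder; your version gives a slightly cleaner global constant with no remainder term.
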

\begin{proof}
Since the OT map 
between the original Gaussians is a simple translation between their means, pushing this translation through 
$f$ yields a transport map whose displacement takes the form 
$f(x+m_\rho-m_\nu)-f(x)$, which is an upper bound of $g^{f_\sharp\rho}_{f_\sharp\nu}$ in the $L^2$-norm. The rest of the proof follows from using Taylor expansions and the triangle inequality. See \Cref{proof:remark: displacement function} for details. 
\end{proof}

We now give a simple one dimensional example where the OT map can be found explicitly. This allows us to compute the displacement error directly and verify \Cref{remark: displacement function}, namely, that the second order leading term is sharp.
\begin{example}\label{example:displacement_function}
    Let $f(x)=x+x^3$, $\alpha=\mathcal N(m_\alpha,\sigma^2)$, $\beta=\mathcal N(m_\beta,\sigma^2)$, and $h=m_\beta-m_\alpha$. Denote by $\tilde f^{m_\alpha}$ the linear
approximation of $f$ at $m_\alpha$ \eqref{eq:affine-approx}. Since $f$ is increasing and defined on $\mathbb{R}$, the OT map between $f_\sharp\alpha$ and $f_\sharp\beta$ can be found explicitly. A direct computation gives
\begin{equation*}
\|g_{f_\sharp\alpha}^{f_\sharp\beta}
-g_{\tilde f^{m_\alpha}_\sharp\alpha}^{\tilde f^{m_\alpha}_\sharp\beta}\|_{L^2(f_\sharp\alpha)}^2
=
9(4m_\alpha^2\sigma^2+3\sigma^4)h^2
+54m_\alpha\sigma^2h^3
+(9m_\alpha^2+15\sigma^2)h^4
+6m_\alpha h^5
+h^6.
\end{equation*}
Thus the leading error is $O(h^2)$. The detailed calculation for this example is in \Cref{computation_of_example_1}.

Note that if $\sigma\to0$, the measures collapse to point masses and the leading nonzero term becomes $O(h^4)$, matching an equivalent Euclidean example which we discuss in \Cref{example: euclidean example} in \ref{sec: Appendix B}. 
\end{example}

\begin{assumption}\label{assuption:dispacement_local}
Let $\nu=\mathcal N(m_\nu,\Sigma)\in\mathcal M_\Sigma$ \eqref{eq:Gaussian_sub}, let $\delta > 0$ and denote by
\begin{equation*}
\nu_i=\mathcal N(m_{\nu_i},\Sigma) \in \mathcal M_\Sigma, \quad i=1,\dots,K,
\end{equation*}
the neighboring Gaussians of $\nu$,
where the means $m_{\nu_i}$ are
uniformly distributed in $B(m_\nu,\delta)$. Let $f:\mathbb R^n\to\mathbb R^n$ be differentiable with non-singular Jacobian. Denote by
$\tilde f$ the linear approximation of $f$ at $m_\nu$ \eqref{eq:affine-approx}. We assume that there exists $c_\xi$ such that
\begin{equation}\label{eq:local_displacement_bound_p=2}
\left\|
g^{f_\sharp\nu_i}_{f_\sharp\nu}
-
g^{\tilde f_\sharp\nu_i}_{\tilde f_\sharp\nu}
\right\|_{L^2(f_\sharp\nu)}^2
\leq
c_\xi\|m_{\nu_i}-m_\nu\|^2,
\end{equation}
where
\begin{equation*}
0<c_\xi<
\min\left\{
1,
\left[
\frac{\|J_f(m_\nu)\|_2^4}
{15(n+2)^2\kappa_\nu^4}
\right]^2,
\left[
\frac{\|J_f(m_\nu)\|_2}
{15(n+2)^2\kappa_\nu^4}
\right]^2
\right\}
\end{equation*}
and $\kappa_\nu
=
\frac{\|J_f(m_\nu)\|_2}
{\sigma_{\min}(J_f(m_\nu))}$
denotes the local condition number of $J_f$ at $m_\nu$.
\end{assumption}
\begin{remark}
    When $f$ satisfies \Cref{assump:regularity}, \Cref{remark: displacement function} guarantees that \eqref{eq:local_displacement_bound_p=2} holds for some constant $c_\xi>0$; here \Cref{assuption:dispacement_local} imposes an additional smallness requirement on the constant $c_\xi$. This condition is only used in the proof of \Cref{lem:BC_comparison}, and its purpose is to ensure that within the local $\delta$ neighborhood and in the case $p=2$, the nonlinear displacement functions are sufficiently close to their linearized version. The
assumption becomes less restrictive when the condition number $\kappa_\beta$ is small and $\|J_f(m_\nu)\|_2$ is not too small (not strongly contractive), since in this case a larger local displacement error is allowed.
\end{remark}
The following example shows that the smallness condition in \Cref{assuption:dispacement_local} can be satisfied for a nonlinear transformation. We modify the function $f(x,y)=(x+y^3,y-x^3)$ of \cite{singer08}, for which \Cref{assuption:dispacement_local} does not hold, by forcing it to be less ``nonlinear''.

\begin{example}\label{exp:assuption_D}
Fix $\delta>0$ and let $\nu=\mathcal N(m_\nu,I_2)$, $\nu_i=\mathcal N(m_{\nu_i},I_2)$, where $m_\nu,m_{\nu_i}\in[0,1]^2$ and $m_{\nu_i}\in B(m_\nu,\delta)$. Let 
\begin{equation*}
f(x,y)
=
\left(
x+\epsilon y^3,
y-\epsilon x^3
\right).
\end{equation*}
Then \Cref{assuption:dispacement_local} is satisfied when
\begin{equation*}
\epsilon^2
\Bigl[
99
+54\sqrt{2}\delta
+33\delta^2
+6\sqrt{2}\delta^3
+\delta^4
\Bigr] <\frac{
\left(
1-
\frac{1}{2}
\sqrt{
81\epsilon^4+36\epsilon^2
}
\right)^4
}
{
57600
\left(
1+9\epsilon^2
+
\frac{1}{2}
\sqrt{
81\epsilon^4+36\epsilon^2
}
\right)^3
}.
\end{equation*}
As $\epsilon\to0$, the left-hand side converges to $0$, whereas the right-hand side converges to $\frac{1}{57600}$. Therefore, \Cref{assuption:dispacement_local} can always be satisfied by choosing $|\epsilon|$ sufficiently small. The detailed calculation is in \Cref{computation_of_example_2}.
\end{example}

\subsubsection{Proof of the main result (\texorpdfstring{\Cref{theorem:main}}{Theorem~\ref{theorem:main}})}

We aim at approximating  $\dwm(f_\sharp\alpha,f_\sharp\beta)$ by
$d_M(f(m_{\alpha}),f(m_{\beta}))$. 
Specifically, in the following we compare
\begin{equation}\label{eq:dwm_def_transition}
\dwm^2(f_\sharp\alpha,f_\sharp\beta)
=\frac12\Big(
\big\langle g^{f_\sharp\alpha}_{f_\sharp\beta}, F^\dagger_{N_{f_\sharp\beta}}\big(g^{f_\sharp\alpha}_{f_\sharp\beta}\big)\big\rangle_{f_\sharp\beta}
+
\big\langle g^{f_\sharp\beta}_{f_\sharp\alpha}, F^\dagger_{N_{f_\sharp\alpha}}\big(g^{f_\sharp\beta}_{f_\sharp\alpha}\big)\big\rangle_{f_\sharp\alpha}
\Big),
\end{equation}
to
\begin{align}\label{eq:f-M-symm}
      &d_{M}^2(f(m_\alpha),f(m_\beta))=\frac{1}{2} [ f(m_\alpha) - f(m_\beta) ]^\top [C^\dagger_{f(m_\beta)} + C^\dagger_{f(m_\alpha)}][ f(m_\alpha) - f(m_\beta) ].
\end{align}

To make $\dwm$
analyzable, we consider the first order approximation of $f$ near the relevant means.  Applying Taylor's theorem to $f$ at $m_\beta$, we obtain 

\begin{equation*}
f(x)=f(m_\beta)+J_f(m_\beta)(x-m_\beta)+O(\|x-m_\beta\|^2),
\end{equation*}
and denote its affine approximation as
\begin{equation}\label{eq:affine-approx}
\tilde f^{m_\beta}(x)=f(m_\beta)+J_f(m_\beta)(x-m_\beta).
\end{equation}

We approximate the first inner product in \eqref{eq:dwm_def_transition} by replacing
$f$ with $\tilde f^{m_\beta}$ and aim to find the difference between { a Euclidean Mahalanobis term and a linearized Wasserstein Mahalanobis term:
}

\begin{equation}\label{eq:two_quadratic_forms_beta}
    [ f(m_\alpha) - f(m_\beta) ]^\top C^\dagger_{f(m_\beta)}[ f(m_\alpha) - f(m_\beta) ] \quad
\text{and} \quad
\big\langle g^{\tilde f^{m_\beta}_\sharp\alpha}_{\tilde f^{m_\beta}_\sharp\beta}, 
F^\dagger_{N_{\tilde f^{m_\beta}_\sharp\beta}}
\big(g^{\tilde f^{m_\beta}_\sharp\alpha}_{\tilde f^{m_\beta}_\sharp\beta}\big)
\big\rangle_{\tilde f^{m_\beta}_\sharp\beta},
\end{equation}
where \begin{equation}\label{eq:emp_cov_beta}
    C_{f(m_\beta)}
    =\frac{1}{K}\sum_{i=1}^K
    \big(f(m_{\beta_i})-f(m_\beta)\big)\big(f(m_{\beta_i})-f(m_\beta)\big)^\top, 
\end{equation}
\begin{equation}\label{eq:F_matrix_transition}
   F_{N_{\tilde f^{m_\beta}_\sharp\beta}} (h) = \frac{1}{K}\sum_{i=1}^K
    J_f(m_\beta)(m_{\beta_i}-m_\beta)\big(J_f(m_\beta)(m_{\beta_i}-m_\beta)\big)^\top\Biggr. \int_{\mathbb{R}^n}h(y)   d \tilde f^{m_\beta}_\sharp\beta (y).
\end{equation}
{A direct computation (cf. \Cref{lemma: WM-inner-covariance}) shows that the second term in \eqref{eq:two_quadratic_forms_beta} reduces to 
\begin{equation}\label{eq:inner product constant form}
    \big\langle g^{\tilde f^{m_\beta}_\sharp\alpha}_{\tilde f^{m_\beta}_\sharp\beta}, 
F^\dagger_{N_{\tilde f^{m_\beta}_\sharp\beta}}
\big(g^{\tilde f^{m_\beta}_\sharp\alpha}_{\tilde f^{m_\beta}_\sharp\beta}\big)
\big\rangle_{\tilde f^{m_\beta}_\sharp\beta} = (J_f(m_\beta)(m_\alpha-m_\beta))^\top M^\dagger_{N_{\tilde f^{m_\beta}_\sharp\beta}}(J_f(m_\beta)(m_\alpha-m_\beta)),
\end{equation}
where $  M_{N_{\tilde f^{m_\beta}_\sharp\beta}} = \frac{1}{K}\sum_{i=1}^K
    J_f(m_\beta)(m_{\beta_i}-m_\beta)\big(J_f(m_\beta)(m_{\beta_i}-m_\beta)\big)^\top$.
The key idea for comparing the terms in \eqref{eq:two_quadratic_forms_beta} is the following perturbation result for pseudo-inverses, with $A, B$ being the corresponding covariance matrices $C_{f(m_\beta)} $ and $M_{N_{\tilde f^{m_\beta}_\sharp\beta}}$ in  \eqref{eq:emp_cov_beta} and \eqref{eq:F_matrix_transition}, respectively.

\begin{lemma}\label{pseudo inverse lemma}\cite[Theorem 3.3]{stewart1977pseudo-inverse-purturbation}
For $A, B\in \mathbb{C}^{m \times n}$, we have $
    \|B^\dagger - A^\dagger\| \leq \mu \max\{ \| A^\dagger\|_2 \|B^\dagger\|_2 \} \|B-A\|,
$
where $\mu$ is a constant with $\mu \leq 3$, and $\|\cdot\|$ denotes any unitary invariant matrix norm.
\end{lemma}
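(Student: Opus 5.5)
The statement is Stewart's perturbation bound \cite[Theorem 3.3]{stewart1977pseudo-inverse-purturbation}; we read the constant as $\max\{\|A^\dagger\|_2^2,\|B^\dagger\|_2^2\}$, which bounds $\|A^\dagger\|_2\|B^\dagger\|_2$. The plan is to write $B^\dagger-A^\dagger$ exactly as a sum of three terms, each linear in $E=B-A$, and then bound each term separately.

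\textbf{Step 1: the decomposition.} Set $E=B-A$. I will verify the identity
\begin{equation*}
B^\dagger - A^\dagger = -B^\dagger E A^\dagger + B^\dagger (B^\dagger)^* E^* (I-AA^\dagger) + (I-B^\dagger B) E^* (A^\dagger)^* A^\dagger .
\end{equation*}
The verification uses only the Moore--Penrose identities for $A$ and $B$: $XX^\dagger X=X$, $X^\dagger XX^\dagger=X^\dagger$, and the self-adjointness of $XX^\dagger$ and $X^\dagger X$.

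\emph{First term.} It expands to $-B^\dagger BA^\dagger+B^\dagger AA^\dagger$.

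\emph{Second term.} Expand $E^*=B^*-A^*$. The $B^*$ part uses $B^\dagger(B^\dagger)^*B^*=B^\dagger(BB^\dagger)^*=B^\dagger BB^\dagger=B^\dagger$. The $A^*$ part vanishes because $A^*AA^\dagger=(AA^\dagger A)^*=A^*$, so $A^*(I-AA^\dagger)=0$. Hence the second term equals $B^\dagger-B^\dagger AA^\dagger$.

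\emph{Third term.} The $B^*$ part vanishes because $B^\dagger BB^*=(BB^\dagger B)^*=B^*$, so $(I-B^\dagger B)B^*=0$. For the $A^*$ part, $A^*(A^\dagger)^*A^\dagger=(A^\dagger A)^*A^\dagger=A^\dagger AA^\dagger=A^\dagger$. Hence the third term equals $-(I-B^\dagger B)A^\dagger=-A^\dagger+B^\dagger BA^\dagger$.

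Adding the three results, the $B^\dagger BA^\dagger$ and $B^\dagger AA^\dagger$ contributions cancel and exactly $B^\dagger-A^\dagger$ remains.

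\textbf{Step 2: bounding each term.} For any unitarily invariant norm $\|\cdot\|$ we have $\|XYZ\|\le\|X\|_2\|Y\|\|Z\|_2$, and $\|E^*\|=\|E\|$. The projectors $I-AA^\dagger$ and $I-B^\dagger B$ are orthogonal, so they have spectral norm at most $1$. This gives
\begin{equation*}
\|B^\dagger-A^\dagger\|\le \big(\|B^\dagger\|_2\|A^\dagger\|_2+\|B^\dagger\|_2^2+\|A^\dagger\|_2^2\big)\|E\|\le 3\max\{\|A^\dagger\|_2^2,\|B^\dagger\|_2^2\}\,\|B-A\|,
\end{equation*}
where the last step uses $\|A^\dagger\|_2\|B^\dagger\|_2\le\max\{\|A^\dagger\|_2^2,\|B^\dagger\|_2^2\}$. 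This is the claim with $\mu\le 3$.

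A smaller constant can be obtained by exploiting that the three terms have mutually orthogonal ranges or cokernels; this is how Stewart gets sharper values of $\mu$ for particular norms. For our purposes $\mu=3$ suffices.

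The main obstacle is Step 1: guessing and checking the three-term identity. Once it is in hand, Step 2 is routine norm bookkeeping.
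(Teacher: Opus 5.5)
Your proof is correct and complete. The paper does not prove this lemma but simply imports Stewart's Theorem~3.3, and your route---Wedin's exact three-term decomposition of $B^\dagger-A^\dagger$ (which you verify correctly from the Moore--Penrose identities), followed by $\|XYZ\|\le\|X\|_2\|Y\|\|Z\|_2$ and $\|I-AA^\dagger\|_2,\|I-B^\dagger B\|_2\le 1$---is precisely the standard argument behind that citation, giving $\mu=3$.

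Your reading of the constant as $\max\{\|A^\dagger\|_2^2,\|B^\dagger\|_2^2\}$ is also the right one. That is the form the paper actually invokes in the proofs of \Cref{rmk:cov inverse Eulidean} and \Cref{operator differece lemma}. The literal product $\|A^\dagger\|_2\|B^\dagger\|_2$ fails under a rank change: take $A=\mathrm{diag}(1,0)$ and $B=\mathrm{diag}(1,\epsilon)$ with small $\epsilon>0$; then $\|B^\dagger-A^\dagger\|_2=1/\epsilon$, while $3\|A^\dagger\|_2\|B^\dagger\|_2\|B-A\|_2=3$.
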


\begin{corollary}\label{rmk:cov inverse Eulidean}
Let $f:\mathbb R^n\to\mathbb R^n$ satisfy \Cref{assump:regularity}, and $C_{f(m_\beta)}$ in \eqref{eq:emp_cov_beta} satisfies \Cref{Assumption: empirical sampling}. Then 
\begin{equation*}
 \quad  \|C_{f(m_\beta)}^\dagger\|_2 = O \left(\delta^{-2}\right). 
\end{equation*}
Moreover,
\begin{equation*}
\big\|
C_{f(m_\beta)}
-
M_{N_{\tilde f^{m_\beta}_\sharp\beta}}
\big\|_2
=
O(\delta^3),
\end{equation*}
and consequently, by \Cref{pseudo inverse lemma},
\begin{equation*}
\big\|
C_{f(m_\beta)}^\dagger
-
M^\dagger_{N_{\tilde f^{m_\beta}_\sharp\beta}}
\big\|_2
=
O(\delta^{-1}).
\end{equation*}
\end{corollary}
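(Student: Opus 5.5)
The plan is to prove the three bounds in sequence: first the pseudo-inverse bound $\|C_{f(m_\beta)}^\dagger\|_2=O(\delta^{-2})$, then the perturbation estimate, and finally to combine them through \Cref{pseudo inverse lemma}. Throughout, write $J=J_f(m_\beta)$ and $h_i=m_{\beta_i}-m_\beta$, so that $\|h_i\|\le\delta$.

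\emph{Step 1: the pseudo-inverse of $C_{f(m_\beta)}$.} Let $X$ be uniform on $B(m_\beta,\delta)$. Taylor expansion under \Cref{assump:regularity} gives
\[
f(X)-f(m_\beta)=J(X-m_\beta)+R(X), \qquad \|R(X)\|\le \tfrac12\sup\|D^2f\|\,\delta^2 .
\]
Since $\mathbb E[(X-m_\beta)(X-m_\beta)^\top]=\frac{\delta^2}{n+2}I$, we get
\[
\mathrm{Cov}(f(X))=\frac{\delta^2}{n+2}JJ^\top+O(\delta^3).
\]
The cross terms are $O(\delta\cdot\delta^2)$ and the remainder term is $O(\delta^4)$. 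Because $f$ is a diffeomorphism, $J$ is nonsingular, so $\lambda_{\min}(JJ^\top)=\sigma_{\min}(J)^2>0$. Hence, for $\delta$ small, $\lambda_{\min}^+(\mathrm{Cov}(f(X)))\ge c\,\delta^2$ and the covariance has full rank. Weyl's inequality together with \eqref{eq:sampling_error_small_enough} then gives $\lambda_{\min}(C_{f(m_\beta)})\ge \frac12\lambda_{\min}^+(\mathrm{Cov}(f(X)))\ge \frac c2\delta^2$. Therefore $C_{f(m_\beta)}$ is invertible and $\|C_{f(m_\beta)}^\dagger\|_2=O(\delta^{-2})$.

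\emph{Step 2: comparison with $M_{N_{\tilde f^{m_\beta}_\sharp\beta}}$.} For each sample, $f(m_{\beta_i})-f(m_\beta)=Jh_i+r_i$ with $\|r_i\|\le C\delta^2$. Expanding,
\[
C_{f(m_\beta)}-M_{N_{\tilde f^{m_\beta}_\sharp\beta}}=\frac1K\sum_{i=1}^K\big(Jh_ir_i^\top+r_i(Jh_i)^\top+r_ir_i^\top\big).
\]
Each summand has spectral norm at most $2\|J\|_2\delta\cdot C\delta^2+C^2\delta^4$, so the average is $O(\delta^3)$ uniformly in $K$. Here we use only the uniform bounds on the first and second derivatives from \Cref{assump:regularity}.

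\emph{Step 3: the pseudo-inverse difference.} To apply \Cref{pseudo inverse lemma} we also need $\|M^\dagger_{N_{\tilde f^{m_\beta}_\sharp\beta}}\|_2=O(\delta^{-2})$. This is the main point requiring care, because a small perturbation could in principle change the rank and blow up the pseudo-inverse. By Step 2 and Weyl's inequality,
\[
\lambda_{\min}(M_{N_{\tilde f^{m_\beta}_\sharp\beta}})\ge \tfrac c2\delta^2-O(\delta^3)\ge \tfrac c4\delta^2
\]
for $\delta$ small enough. So $M_{N_{\tilde f^{m_\beta}_\sharp\beta}}$ is also full rank and its pseudo-inverse has norm $O(\delta^{-2})$. \Cref{pseudo inverse lemma} with the spectral norm then yields
\[
\|C^\dagger_{f(m_\beta)}-M^\dagger_{N_{\tilde f^{m_\beta}_\sharp\beta}}\|_2\le 3\cdot O(\delta^{-2})\,O(\delta^{-2})\cdot O(\delta^3)=O(\delta^{-1}).
\]
The implied constants depend only on $n$, $\sigma_{\min}(J)$, and the derivative bounds on $f$.
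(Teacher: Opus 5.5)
Your proposal is correct and follows the same three-step route as the paper: the Taylor and uniform-ball computation of $\mathrm{Cov}(f(X))$ combined with Weyl's inequality and \eqref{eq:sampling_error_small_enough} to bound $\|C_{f(m_\beta)}^\dagger\|_2$, then the term-by-term Taylor-remainder bound on $C_{f(m_\beta)}-M_{N_{\tilde f^{m_\beta}_\sharp\beta}}$, and finally \Cref{pseudo inverse lemma}. Two of your steps fill in points the paper only asserts: you use the nonsingularity of $J_f(m_\beta)$ to obtain full rank, so Weyl's inequality applies cleanly to $\lambda_{\min}$ rather than $\lambda_{\min}^+$, and you derive $\|M^\dagger_{N_{\tilde f^{m_\beta}_\sharp\beta}}\|_2=O(\delta^{-2})$ from Step 2 via Weyl.
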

}
\begin{proof}
    The proof is given in \Cref{proof:cov inverse Euclidean}. The same
estimates hold at the base point $m_\alpha$.
\end{proof}

\begin{proposition} \label{linear approx of f approx euclidean mahalanobis}

Let $ \alpha = \mathcal{N}(m_\alpha, \Sigma)$, $\beta = \mathcal{N}(m_\beta, \Sigma)$ be Gaussian distributions on $\mathbb{R}^n$. For $\delta>0$, let $\alpha_i=\mathcal N(m_{\alpha_i},\Sigma)$, $i=1,\dots,N$, and $ \beta_j=\mathcal N(m_{\beta_j},\Sigma)$, $j=1,\dots,K$, where the means $m_{\alpha_i}$, $m_{\beta_j}$  are uniformly distributed in ${B}(m_\alpha,\delta)$ and ${B}(m_\beta,\delta)$,  respectively. Let $f:\mathbb{R}^n\to\mathbb{R}^n$ satisfy \Cref{assump:regularity}, and assume $N,K$ and $\delta$ satisfy \Cref{Assumption: empirical sampling}.
Denote by $\tilde f^{m_\beta}$, $\tilde f^{m_\alpha}$ linear approximations of $f$ at $m_\beta$ and $m_\alpha$, respectively; see \eqref{eq:affine-approx}. Then
 \begin{align*}
        &d_M^2(f(m_\alpha),f(m_\beta)) \\
          =& \frac{1}{2}\left(\langle g^{\tilde{f}^{m_\beta}_\sharp\alpha}_{\tilde{f}^{m_\beta}_\sharp\beta}, F^\dagger_{N_{\tilde{f}^{m_\beta}_\sharp\beta}}(g^{\tilde{f}^{m_\beta}_\sharp\alpha}_{\tilde{f}^{m_\beta}_\sharp\beta}) \rangle_{\tilde{f}^{m_\beta}_\sharp\beta} + \langle g^{\tilde{f}^{m_\alpha}_{\sharp} \beta}_{\tilde{f}^{m_\alpha}_{\sharp} \alpha},   F^\dagger_{N_{\tilde{f}^{m_\alpha}_{\sharp} \alpha}} ( g^{\tilde{f}^{m_\alpha}_{\sharp} \beta}_{\tilde{f}^{m_\alpha}_{\sharp} \alpha} ) \rangle_{\tilde{f}^{m_\alpha}_{\sharp} \alpha}\right) + O\left(\frac{\|m_{\alpha} - m_\beta\|^2}{\delta}\right).
 \end{align*}
 \begin{proof}
 {
Recall that $d_{M}^2(f(m_\alpha),f(m_\beta))=\frac{1}{2} (f(m_\alpha) - f(m_\beta) )^\top (C^\dagger_{f(m_\beta)} + C^\dagger_{f(m_\alpha)})( f(m_\alpha) - f(m_\beta) )$. By \eqref{eq:inner product constant form}, it suffices to bound 
\begin{equation}\label{eq:difference for proposition}
   ( f(m_\alpha) - f(m_\beta) )^\top C^\dagger_{f(m_\beta)}( f(m_\alpha) - f(m_\beta) ) - (J_f(m_\beta)(m_\alpha-m_\beta))^\top M^\dagger_{N_{\tilde f^{m_\beta}_\sharp\beta}}(J_f(m_\beta)(m_\alpha-m_\beta)).
\end{equation}
The symmetric terms can be handled in the same way by linearizing $f$ at $m_\alpha$. The desired bound follows from \eqref{eq:difference for proposition}, which bounds the pseudo-inverse difference $\big\|C_{f(m_\beta)}^\dagger - M^\dagger_{N_{\tilde f^{m_\beta}_\sharp\beta}}
\big\|_2$; cf.\ \Cref{rmk:cov inverse Eulidean}. The details of this computation involve adding and subtracting an intermediate term and applying the triangle inequality, which is shown in \Cref{Proof of:linear approx of f approx euclidean mahalanobis}.
}
\end{proof}
\end{proposition}

\Cref{linear approx of f approx euclidean mahalanobis} finds the gap between the Euclidean Mahalanobis distance and the Wasserstein Mahalanobis distance applied to the linearization of $f$. To connect back to the true $\dwm(f_\sharp\alpha,f_\sharp\beta)$, we need to compare the following inner products:

\begin{equation}\label{eq:inner product between linear and nonlinear}
   \big\langle g^{f_\sharp\alpha}_{f_\sharp\beta}, F^\dagger_{N_{f_\sharp\beta}}\big(g^{f_\sharp\alpha}_{f_\sharp\beta}\big)\big\rangle_{f_\sharp\beta} \quad
\text{and} \quad
\big\langle g^{\tilde f^{m_\beta}_\sharp\alpha}_{\tilde f^{m_\beta}_\sharp\beta}, 
F^\dagger_{N_{\tilde f^{m_\beta}_\sharp\beta}}
\big(g^{\tilde f^{m_\beta}_\sharp\alpha}_{\tilde f^{m_\beta}_\sharp\beta}\big)
\big\rangle_{\tilde f^{m_\beta}_\sharp\beta}.
\end{equation}
We summarize the structure of the remaining argument in \Cref{fig:nonlinear_flowchart}.

\begin{figure}[H]
\centering
\begin{tikzpicture}[
    >=Latex,
    node distance=8mm,
    every node/.style={font=\small},
    box/.style={
        rectangle,
        rounded corners,
        draw,
        align=center,
        text width=0.8\linewidth,
        inner sep=6pt
    },
    arrow/.style={->, thick}
]

\node[box] (goal) {
\textbf{Goal}\\
Approximate the Wasserstein Mahalanobis distance
{\color{red}{$\dwm^2(f_\sharp\alpha,f_\sharp\beta)$}}
by the Euclidean Mahalanobis distance between the means
{\color{blue}{$d_M^2(f(m_\alpha),f(m_\beta))$}}.
};

\node[box, below=of goal] (step1) {
\textbf{linearize $f$ at the relevant means}\\
Use $\tilde f^{m_\beta}$ and $\tilde f^{m_\alpha}$ to approximate the nonlinear map $f$ by its linearizations at $m_\beta$ and $m_\alpha$, respectively.
};

\node[box, below=of step1] (prop) {
\textbf{\Cref{linear approx of f approx euclidean mahalanobis}}\\
Shows that the Euclidean Mahalanobis distance {\color{blue}{$$d_M^2(f(m_\alpha),f(m_\beta))$$}} is approximated by the
linearized Wasserstein Mahalanobis distance
\color{violet}{ \begin{align}\label{eq:flowchart_eq1}
          \frac{1}{2}\left(\langle g^{\tilde{f}^{m_\beta}_\sharp\alpha}_{\tilde{f}^{m_\beta}_\sharp\beta}, F^\dagger_{N_{\tilde{f}^{m_\beta}_\sharp\beta}}(g^{\tilde{f}^{m_\beta}_\sharp\alpha}_{\tilde{f}^{m_\beta}_\sharp\beta}) \rangle_{\tilde{f}^{m_\beta}_\sharp\beta} + \langle g^{\tilde{f}^{m_\alpha}_{\sharp} \beta}_{\tilde{f}^{m_\alpha}_{\sharp} \alpha},   F^\dagger_{N_{\tilde{f}^{m_\alpha}_{\sharp} \alpha}} ( g^{\tilde{f}^{m_\alpha}_{\sharp} \beta}_{\tilde{f}^{m_\alpha}_{\sharp} \alpha} ) \rangle_{\tilde{f}^{m_\alpha}_{\sharp} \alpha}\right).
 \end{align}}
};

\node[box, below=of prop] (disp) {
\textbf{\Cref{remark: displacement function} $\&$  \Cref{displacement function assumption}}\\
 Controls the gap between the nonlinear OT displacement $g^{{f}_\sharp\alpha}_{{f}_\sharp\beta}$
and the linearized displacement $g^{\tilde{f}^{m_\beta}_\sharp\alpha}_{\tilde{f}^{m_\beta}_\sharp\beta}$.
};

\node[box, below=of disp] (op) {
\textbf{\Cref{lem:BC_comparison,operator differece lemma}}\\
Use \Cref{displacement function assumption,assuption:dispacement_local} to control the difference between linear and nonlinear empirical covariance operators: $F^\dagger_{N_{\tilde{f}^{m_\beta}_\sharp\beta}}\quad \text{and} \quad F^\dagger_{N_{{f}_\sharp\beta}}.$
};

\node[box, below=of op] (main) {
\textbf{Theorem \ref{theorem:main} (main theorem)}\\
Apply \Cref{lem:BC_comparison,operator differece lemma} to find the difference between {\color{violet}{\eqref{eq:flowchart_eq1}}} and Wasserstein Mahalanobis distance
{\color{red}{$\dwm^2(f_\sharp\alpha,f_\sharp\beta)$}}. 

Connect back to \Cref{linear approx of f approx euclidean mahalanobis}, obtain difference between 
\begin{equation}
    {\color{red}\dwm^2(f_\sharp\alpha,f_\sharp\beta)} \quad \text{and} \quad {\color{blue}{d_M^2(f(m_\alpha),f(m_\beta))}}  .
\end{equation}
};

\draw[arrow] (goal) -- (step1);
\draw[arrow] (step1) -- (prop);
\draw[arrow] (prop) -- (disp);
\draw[arrow] (disp) -- (op);
\draw[arrow] (op) -- (main);

\end{tikzpicture}
\caption{Flow chart for nonlinear special approximation section.}
\label{fig:nonlinear_flowchart}
\end{figure}

Notice that the two inner products in \eqref{eq:inner product between linear and nonlinear} are taken in different $L^2$ spaces, however, in  \eqref{eq:inner product constant form}, the corresponding $L^2$ inner product reduces to the integral of a constant scalar. Since both $f_\sharp\beta$ and
$\tilde f^{m_\beta}_\sharp\beta$ are probability measures, the value of
\eqref{eq:inner product constant form} stays unchanged if we take the inner product with respect to $f_\sharp\beta$ instead of
$\tilde f^{m_\beta}_\sharp\beta$, that is
\begin{equation*}
\big\langle g^{\tilde f^{m_\beta}_\sharp\alpha}_{\tilde f^{m_\beta}_\sharp\beta}, 
F^\dagger_{N_{\tilde f^{m_\beta}_\sharp\beta}}
\big(g^{\tilde f^{m_\beta}_\sharp\alpha}_{\tilde f^{m_\beta}_\sharp\beta}\big)
\big\rangle_{\tilde f^{m_\beta}_\sharp\beta} = \big\langle g^{\tilde f^{m_\beta}_\sharp\alpha}_{\tilde f^{m_\beta}_\sharp\beta}, 
F^\dagger_{N_{\tilde f^{m_\beta}_\sharp\beta}}
\big(g^{\tilde f^{m_\beta}_\sharp\alpha}_{\tilde f^{m_\beta}_\sharp\beta}\big)
\big\rangle_{f_\sharp\beta}.
\end{equation*}
Consequently, the main difference between the two inner products in \eqref{eq:inner product between linear and nonlinear}
is driven primarily by how well the true OT displacement $g^{f_\sharp\alpha}_{f_\sharp\beta}$ is approximated by the OT displacement of linearized $f$, namely $g^{\tilde f^{m_\beta}_\sharp\alpha}_{\tilde f^{m_\beta}_\sharp\beta}$.
Moreover, finding the difference between 
\begin{equation}\label{eq:diff_dispalcement}
g^{f_\sharp\alpha}_{f_\sharp\beta} \quad \text{and}\quad g^{\tilde f^{m_\beta}_\sharp\alpha}_{\tilde f^{m_\beta}_\sharp\beta}
\end{equation}
extends to finding the difference between each
OT displacement pair in the neighborhood sets $\{g^{f_\sharp\beta_i}_{f_\sharp\beta}\}_{i=1}^K$ and $\{g^{\tilde f^{m_\beta}_\sharp\beta_i}_{\tilde f^{m_\beta}_\sharp\beta}\}_{i=1}^K,$
which can lead to the difference between
\begin{equation}\label{eq:operater_diff}
    F^\dagger_{N_{f_\sharp\beta}} \quad \text{ and} \quad F^\dagger_{N_{\tilde f^{m_\beta}_\sharp\beta}},
\end{equation} 
as they are constructed from those two sets. We need the general displacement error assumption (\Cref{displacement function assumption}) to first control the difference between \eqref{eq:diff_dispalcement}, which will be used to compute differences between the operators \eqref{eq:operater_diff}. To do that, we first compare the coefficient matrices used in the nonlinear and linearized pseudo-inverse covariance operators.

\begin{lemma}\label{lem:BC_comparison}
Let $\beta = \mathcal{N}(m_\beta, \Sigma)$ be a Gaussian distribution on $\mathbb R^n$, and let $\beta_i=\mathcal N(m_{\beta_i},\Sigma), i=1,\dots,K$, where the means $m_{\beta_i}$ are  uniformly distributed in ${B}(m_\beta,\delta)$. Let $f:\mathbb{R}^n\to\mathbb{R}^n$ satisfy \Cref{assump:regularity}, and assume that $K$, $\delta$ satisfy \Cref{Assumption: empirical sampling}. Let $\tilde{f}^{m_\beta}$ be linear approximation of $f$ at $m_\beta$, see \eqref{eq:affine-approx}. Denote by
\begin{equation*}
N_{\tilde f^{m_\beta}_\sharp\beta} = \Big\{g^{\tilde f^{m_\beta}_\sharp\beta_i}_{\tilde f^{m_\beta}_\sharp\beta}\Big\}_{i=1}^K,
\quad \text{and} \quad
N_{f_\sharp\beta} = \Big\{g^{f_\sharp \beta_i}_{f_\sharp \beta}\Big\}_{i=1}^K,
\end{equation*}
the two sets containing OT displacement functions \eqref{eq:displacement_mu_mui} from $\tilde f^{m_\beta}_\sharp\beta$ to $\tilde f^{m_\beta}_\sharp\beta_i$, and from ${f}_\sharp \beta$ to ${f}_\sharp \beta_i$, respectively. Let $B$ and $C$ be the matrices associated with ${N_{f_\sharp\beta}}$ and ${N_{\tilde f^{m_\beta}_\sharp\beta}}$, respectively, following \eqref{eq:B_def}:

\begin{equation*}
B_{kl}
=
\frac1K\sum_{i=1}^K
\Big\langle g^{f_\sharp\beta_i}_{f_\sharp\beta},\ g^{f_\sharp\beta_l}_{f_\sharp\beta}\Big\rangle_{f_\sharp\beta}
\Big\langle g^{f_\sharp\beta_i}_{f_\sharp\beta},\ g^{f_\sharp\beta_k}_{f_\sharp\beta}\Big\rangle_{f_\sharp\beta},
\end{equation*}
and
\begin{equation*}
C_{kl}
=
\frac1K\sum_{i=1}^K
\Big\langle g^{\tilde f^{m_\beta}_\sharp\beta_i}_{\tilde f^{m_\beta}_\sharp\beta},\ g^{\tilde f^{m_\beta}_\sharp\beta_l}_{\tilde f^{m_\beta}_\sharp\beta}\Big\rangle_{\tilde f^{m_\beta}_\sharp\beta}
\Big\langle g^{\tilde f^{m_\beta}_\sharp\beta_i}_{\tilde f^{m_\beta}_\sharp\beta},\ g^{\tilde f^{m_\beta}_\sharp\beta_k}_{\tilde f^{m_\beta}_\sharp\beta}\Big\rangle_{\tilde f^{m_\beta}_\sharp\beta}.
\end{equation*}
Suppose that there exists $p\geq 2$ such that \Cref{displacement function assumption} holds for $\beta$ with $r_\beta \geq \delta$. If $p=2$, suppose in addition that  \Cref{assuption:dispacement_local} holds for $\beta$. Then, we have the following,
\begin{equation*}
\|C^\dagger\|_2 = O(\delta^{-4}),\quad \|B-C\|_2 = O(\delta^{3+p/2}), \quad \|B^\dagger\|_2 = O(\delta^{-4}).
\end{equation*}

\begin{proof}
We prove the above three estimates separately in \Cref{Proof:lem_BC_comparison}. The main idea is as follows.  First, all displacement functions in $N_{\tilde f^{m_\beta}_\sharp\beta}$ are constant functions, so the matrix $C$ can be written explicitly. The uniform sampling of $m_{\beta_i}$ in $B(m_\beta,\delta)$ and \Cref{Assumption: empirical sampling} together give
\begin{equation*}
    \|C^\dagger\|_2=O(\delta^{-4}).
\end{equation*}
Second, using \Cref{lemma:bounds for B-C}, one can obtain
\begin{equation*}
    |(B-C)_{kl}|
    =
    O(\delta^{3}(\max_i\|g^{f_\sharp\beta_i}_{f_\sharp\beta} - g^{\tilde f^{m_\beta}_\sharp\beta_i}_{\tilde f^{m_\beta}_\sharp\beta}\|_{L^2(f_\sharp\beta)})).
\end{equation*}
Applying \Cref{displacement function assumption} gives
\begin{equation*}
    \|B-C\|_2=O(\delta^{3+p/2}).
\end{equation*}
Finally, since $B$ is a small perturbation of $C$, Weyl's inequality (\Cref{Weyl's inequality}) connects the lower singular value bound from $C$ to $B$, yielding
\begin{equation*}
    \|B^\dagger\|_2=O(\delta^{-4}).\qedhere
\end{equation*}
\end{proof}

\end{lemma}

The lemma above controls the coefficients in the pseudo-inverse operator representations. To compare the operators themselves, we must also account for the fact that they are defined on different $L^2$ spaces.
$F^\dagger_{N_{f_\sharp\beta}}$ is an operator on
$L^2(f_\sharp\beta)$, whereas the linearized 
$F^\dagger_{N_{\tilde f^{m_\beta}_\sharp\beta}}$ is an operator on
$L^2(\tilde f^{m_\beta}_\sharp\beta)$.
In order to obtain a bound on the difference between them,
we first unite these two $L^2$ spaces through a transformation. Define the Radon-Nikodym derivative
\begin{equation*}
w(y)=\frac{d(\tilde f^{m_\beta}_\sharp\beta)}{d(f_\sharp\beta)}(y),
\end{equation*}
which is well-defined (\Cref{lemma:Radon-Nikodym well define}, \ref{sec: Appendix B}). Then define the operator $U$ by
\begin{equation}\label{eq:U_def}
U:\ L^{2}(\tilde f^{m_\beta}_\sharp\beta)\ \longrightarrow\ L^{2}(f_\sharp\beta),
\qquad U\phi(y)=\phi\sqrt{w},
\end{equation}
which is an isometry with inverse
\begin{equation*}
U^{-1}:\ L^{2}(f_\sharp\beta)\ \longrightarrow\ L^{2}(\tilde f^{m_\beta}_\sharp\beta),
\qquad U^{-1}\psi=\frac{\psi}{\sqrt{w}}.
\end{equation*}
We then define the operator $\tilde F^\dagger_{N_{\tilde f^{m_\beta}_\sharp\beta}}$ acting on $L^2(f_\sharp\beta)$ by
\begin{equation}\label{eq:def_Ftilde_dagger}
\tilde F^\dagger_{N_{\tilde f^{m_\beta}_\sharp\beta}}
=
UF^\dagger_{N_{\tilde f^{m_\beta}_\sharp\beta}} U^{-1}
:
L^2(f_\sharp\beta)\to L^2(f_\sharp\beta).
\end{equation}
Since $U$ is an isometry, the operator norm is preserved,
\begin{equation}\label{eq:op_norm_invariance}
\bigl\|\tilde F^\dagger_{N_{\tilde f^{m_\beta}_\sharp\beta}}\bigr\|_{\mathrm{op}}
=
\bigl\|F^\dagger_{N_{\tilde f^{m_\beta}_\sharp\beta}}\bigr\|_{\mathrm{op}}.
\end{equation}
With this preparation, both $F^\dagger_{N_{f_\sharp\beta}}$ and
$\tilde F^\dagger_{N_{\tilde f^{m_\beta}_\sharp\beta}}$ act on the same space
$L^2(f_\sharp\beta)$, so their difference can be computed. 

\begin{lemma}\label{operator differece lemma}
Let $\beta = \mathcal{N}(m_\beta, \Sigma)$ be a Gaussian distribution on $\mathbb R^n$. For $\delta>0$, let $\beta_i=\mathcal N(m_{\beta_i},\Sigma), i=1,\dots,K$, where the means $m_{\beta_i}$ are uniformly distributed in ${B}(m_\beta,\delta)$. Let $f:\mathbb{R}^n\to\mathbb{R}^n$ satisfy \Cref{assump:regularity}, and assume that $K$, $\delta$ satisfy \Cref{Assumption: empirical sampling}. Let $\tilde{f}^{m_\beta}$ be linear approximation of $f$ at $m_\beta$; see \eqref{eq:affine-approx}. Denote by
\begin{equation*}
N_{\tilde f^{m_\beta}_\sharp\beta} = \Big\{g^{\tilde f^{m_\beta}_\sharp\beta_i}_{\tilde f^{m_\beta}_\sharp\beta}\Big\}_{i=1}^K,
\qquad
N_{f_\sharp\beta} = \Big\{g^{f_\sharp \beta_i}_{f_\sharp \beta}\Big\}_{i=1}^K,
\end{equation*}
the sets containing OT displacement functions \eqref{eq:displacement_mu_mui} from $\tilde f^{m_\beta}_\sharp\beta$ to $\tilde f^{m_\beta}_\sharp\beta_i$, and from ${f}_\sharp \beta$ to ${f}_\sharp \beta_i$, respectively. Let $F^\dagger_{N_{f_\sharp\beta}}$, $F^\dagger_{N_{\tilde f^{m_\beta}_\sharp\beta}}$
be the pseudo-inverse empirical covariance operators \eqref{eq:Fdagger_Nalpha} defined by $N_{f_\sharp\beta}$ and $N_{\tilde f^{m_\beta}_\sharp\beta}$, respectively. Let
$U$
be the operator defined in \eqref{eq:U_def} and let
\begin{equation*}
\tilde F^\dagger_{N_{\tilde f^{m_\beta}_\sharp\beta}}
=
UF^\dagger_{N_{\tilde f^{m_\beta}_\sharp\beta}}U^{-1}.
\end{equation*}
Suppose that there exists $p\geq 2$ such that \Cref{displacement function assumption} holds for $\beta$ with $r_\beta \geq \delta$. If $p=2$, suppose in addition that  \Cref{assuption:dispacement_local} holds for $\beta$. Then
\begin{equation*}
\|F^\dagger_{N_{f_\sharp \beta}} -\tilde F^\dagger_{N_{\tilde f^{m_\beta}_\sharp\beta}} \|_{op} 
  =
O(\delta^{p/2-3})
+
O\Big(\delta^{-2} H (\tilde f^{m_\beta}_\sharp\beta,\ f_\sharp\beta)\Big),
\end{equation*}
where $H$ denotes the Hellinger distance (\Cref{def:hellinger_standard}).
\begin{proof}
    From \Cref{Lemma:operater_defference_explicit}, $F^\dagger_{N_{f_\sharp \beta}} -\tilde F^\dagger_{N_{\tilde f^{m_\beta}_\sharp\beta}}$ can be explicitly written as in \eqref{eq:operator_diff_explicit}. Using \Cref{pseudo inverse lemma} and \Cref{lem:BC_comparison}, we obtain
    \begin{equation}
        \|B^\dagger\|_2,\|C^\dagger\|_2=O(\delta^{-4}),
\qquad
\|B^\dagger-C^\dagger\|_2=O(\delta^{p/2-5}).
    \end{equation}
The linearized displacement functions are constant,
\begin{equation}
    \left\|
g^{\tilde f^{m_\beta}_\sharp\beta_i}_{\tilde f^{m_\beta}_\sharp\beta}
\right\|_{L^2(f_\sharp\beta)}
= O(\delta),
\end{equation}
while by \Cref{displacement function assumption}, the displacement difference satisfies $$\|g^{f_\sharp \beta_i}_{f_\sharp \beta} - g^{\tilde f^{m_\beta}_\sharp\beta_i}_{\tilde f^{m_\beta}_\sharp\beta} \|_{L^2(f_\sharp\beta)}
=
O(\delta^{p/2}), \qquad i = 1,\ldots,K.$$ Substituting these into \eqref{eq:operator_diff_explicit} and applying the triangle inequality, the terms involving $B^\dagger-C^\dagger$ and $\xi_i$ are bounded by $O(\delta^{p/2 - 3})$. Finally, by the definition of the Hellinger distance (\Cref{def:hellinger_standard}), the term related to $U$ is bounded by $O(\delta^{-2}H\bigl(\tilde f^{m_\beta}_\sharp\beta,\ f_\sharp\beta\bigr))$.
The detailed computations are given in \Cref{Proof:operator differece lemma}.
\end{proof}
\end{lemma}

With the operator difference controlled in \Cref{operator differece lemma}, we can now compare the nonlinear and linearized inner products in \eqref{eq:inner product between linear and nonlinear}. Applying this comparison at both base measures $\alpha$ and $\beta$, and then averaging the two results, gives a bound on the difference between
$\dwm^2(f_\sharp\alpha,f_\sharp\beta)$
and
$d_M^2(f(m_\alpha),f(m_\beta))$,
which yields our main theorem.

\begin{proof}[Proof of the main result, \Cref{theorem:main}]
The key point here is that there are two main reasons the nonlinear inner product and its linearized
counterpart in \eqref{eq:inner product between linear and nonlinear} are different: the pseudo-inverse empirical covariance operators
are different, and the displacement functions themselves are also different. The operator difference is controlled by \Cref{operator differece lemma}, while the displacement difference is controlled by \Cref{displacement function assumption}. Combining these two gives \Cref{lemma:main_theorem_lemma}.

Applying \Cref{lemma:main_theorem_lemma} at the base measure $\beta$, we obtain
\begin{align*}
&\big\langle g^{f_\sharp\alpha}_{f_\sharp\beta}, F^\dagger_{N_{f_\sharp\beta}}\big(g^{f_\sharp\alpha}_{f_\sharp\beta}\big)\big\rangle_{f_\sharp\beta} \\
&= \big\langle g^{\tilde f^{m_\beta}_\sharp\alpha}_{\tilde f^{m_\beta}_\sharp\beta},  F^\dagger_{N_{\tilde f^{m_\beta}_\sharp\beta}} \big(g^{\tilde f^{m_\beta}_\sharp\alpha}_{\tilde f^{m_\beta}_\sharp\beta}\big) \big\rangle_{\tilde f^{m_\beta}_\sharp\beta}+ O(\delta^{p/2-3}\|m_\beta-m_\alpha\|^2)
+
O \left(\frac{\|m_\beta-m_\alpha\|^{2}}{\delta^2}  H(\tilde f^{m_\beta}_\sharp\beta,\ f_\sharp\beta)\right).
\end{align*}
The same argument at the base measure $\alpha$ gives
\begin{align*}
    &\langle g_{f_\sharp \alpha}^{f_\sharp \beta}, F^\dagger_{N_{f_\sharp \alpha}}(g_{f_\sharp \alpha}^{f_\sharp \beta})\rangle_{f_\sharp \alpha}\\ &= \langle g^{\tilde{f}_\sharp^{m_\alpha}\beta}_{\tilde{f}_\sharp^{m_\alpha}\alpha}, {F}^\dagger_{{N_{\tilde f^{m_\alpha}_\sharp\alpha}}}(g^{\tilde{f}_\sharp^{m_\alpha}\beta}_{\tilde{f}_\sharp^{m_\alpha}\alpha})\rangle_{\tilde f^{m_\alpha}_\sharp\alpha}+ O(\delta^{p/2-3}\|m_\beta-m_\alpha\|^2)
+
O \left(\frac{\|m_\beta-m_\alpha\|^{2}}{\delta^2}  H(\tilde f^{m_\alpha}_\sharp\alpha,\ f_\sharp\alpha) \right).
\end{align*}
It follows that 
\begin{align*}
\dwm^2({{f}}_{\sharp}\alpha,{{f}}_{\sharp}\beta) &= \frac{1}{2} (\langle g_{f_\sharp \alpha}^{f_\sharp \beta}, F^\dagger_{N_{f_\sharp \alpha}}(g_{f_\sharp \alpha}^{f_\sharp \beta})\rangle_{f_\sharp \alpha} +  \langle g^{f_\sharp \alpha}_{f_\sharp \beta}, F^\dagger_{N_{f_\sharp \beta}}(g^{f_\sharp \alpha}_{f_\sharp \beta})\rangle_{f_\sharp \beta})\\
& = \frac{1}{2}(\langle g^{\tilde{f}_\sharp^{m_\alpha}\beta}_{\tilde{f}_\sharp^{m_\alpha}\alpha}, F^\dagger_{N_{\tilde f^{m_\alpha}_\sharp\alpha}}(g^{\tilde{f}_\sharp^{m_\alpha}\beta}_{\tilde{f}_\sharp^{m_\alpha}\alpha})\rangle_{\tilde f^{m_\alpha}_\sharp\alpha} + \langle g^{\tilde{f}^{m_\beta}_\sharp\alpha}_{\tilde{f}^{m_\beta}_\sharp\beta}, F^\dagger_{N_{\tilde f^{m_\beta}_\sharp\beta}}(g^{\tilde{f}^{m_\beta}_\sharp\alpha}_{\tilde{f}^{m_\beta}_\sharp\beta})\rangle_{\tilde f^{m_\beta}_\sharp\beta} )\\
&+ O \left(\frac{ \|m_\beta-m_\alpha\|^{2}}{\delta^{3-\frac{p}{2}}}\right)
+
O\left(\frac{\|m_\alpha - m_\beta\|^2}{\delta^2} \left[H(\tilde f^{m_\beta}_\sharp\beta,\ f_\sharp\beta)+H(\tilde f^{m_\alpha}_\sharp\alpha,\ f_\sharp\alpha) \right]\right).
\end{align*}
Finally, by \Cref{linear approx of f approx euclidean mahalanobis}, we obtain the desired result,
\begin{align*}
\dwm^2({{f}}_{\sharp}\alpha,{{f}}_{\sharp}\beta) 
& = d_M^2(f(m_\alpha),f(m_\beta)) \\&+ O \left(\frac{ \|m_\beta-m_\alpha\|^{2}}{\delta^{3-\frac{p}{2}}}\right)
+
O\left(\frac{\|m_\alpha - m_\beta\|^2}{\delta^2} \left[H(\tilde f^{m_\beta}_\sharp\beta,\ f_\sharp\beta)+H(\tilde f^{m_\alpha}_\sharp\alpha,\ f_\sharp\alpha) \right]\right). \qedhere
\end{align*}
\end{proof}

\section{Numerical Experiments}\label{sec:numerics}
Our goal is to numerically test the result from \Cref{theorem:main}. For two Gaussian measures
$\alpha=\mathcal{N}(m_\alpha,\Sigma)$ and 
$\beta=\mathcal{N}(m_\beta,\Sigma)$
we compare the quantities
\begin{equation*}
\dwm^2(f_\sharp\alpha,f_\sharp\beta) \qquad \text{and} \qquad d_M^2(f(m_\alpha),f(m_\beta)).
\end{equation*}
Recall that in \Cref{theorem:main} the difference is controlled by two terms 
\begin{equation}\label{eq:Numerical_error}
Err
= O \left(\frac{ \|m_\beta-m_\alpha\|^{2}}{\delta^{3-\frac{p}{2}}}\right)
+
O\left(\frac{\|m_\alpha - m_\beta\|^2}{\delta^2}\left[H(\tilde f^{m_\beta}_\sharp\beta,\ f_\sharp\beta)+H(\tilde f^{m_\alpha}_\sharp\alpha,\ f_\sharp\alpha) \right]\right).
\end{equation}
In our previous analysis, we established that the case $p=2$ always holds (see \Cref{remark: displacement function}), so that both terms have the same order
\begin{equation}\label{eq:order_p=2}
    Err = O\Big(
\frac{\|m_\beta-m_\alpha\|^2}{\delta^2}
\Big).
\end{equation}
To verify \eqref{eq:order_p=2}, we organize the numerical study into two groups of experiments: \hyperref[sec:exp1A]{Experiment 1A} -- \hyperref[sec:exp1B]{1B} examine the dependence on $\|m_\beta-m_\alpha\|$, while experiments \hyperref[sec:exp2A]{Experiment 2A} -- \hyperref[sec:exp2B]{2B} examine the dependence on $\delta$.

In the first group of  experiments, we fix $\delta$ and let $m_\alpha$ and $m_\beta$ be neighboring grid points from a uniform grid with size $h$ on the unit square. For each such pair, we construct empirical Gaussian clouds $\widehat\alpha$ and $\widehat\beta$ centered at $m_\alpha$ and $m_\beta$, respectively, and denote their empirical means by $\widehat m_\alpha$ and $\widehat m_\beta$.
The two target quantities are then computed are
\begin{equation}\label{eq:numerical_comparison}
\dwmhat^2(f_\sharp\widehat\alpha, f_\sharp\widehat\beta)
\qquad \text{and} \qquad
\hat d_{M}^2(f(\widehat m_\alpha), f(\widehat m_\beta)),
\end{equation}
here $\dwmhat^2(f_\sharp\widehat\alpha, f_\sharp\widehat\beta)$ and $\hat d_{M}^2(f(\widehat m_\alpha), f(\widehat m_\beta))$ are the finite sample approximation of $\dwm^2(f_\sharp\alpha,f_\sharp\beta)$ and  $d_M^2(f(m_\alpha),f(m_\beta))$, respectively.
We then define the numerical error by
\begin{equation*}
\mathrm{Err}(\delta, h)
=
\left|
\dwmhat^2(f_\sharp\widehat\alpha, f_\sharp\widehat\beta)
-
\hat d_{M}^2(f(\widehat m_\alpha), f(\widehat m_\beta))
\right|.
\end{equation*}
To estimate the observed order, we compare two successive grid sizes as the following, for some constant $C>0$,
\begin{equation*}
\frac{\mathrm{Err}(\delta, h)}{\mathrm{Err}(\delta, h/2)} \approx \frac{C h^q}{C (h/2)^q}
=
2^q,
\end{equation*}
and therefore
\begin{equation*}
\mathrm{order}(h)
=
\log_2\left(
\frac{\mathrm{Err}(\delta, h)}{\mathrm{Err}(\delta, h/2)}
\right) \approx q.
\end{equation*}
From \eqref{eq:order_p=2}, we expect $q=2$.

In the second group of experiments (\hyperref[sec:exp2A]{Experiment 2A}), we fix the grid size $h$ and vary $\delta$. We compare two successive values of $\delta$ and write
\begin{equation*}
\frac{\mathrm{Err}(\delta, h)}{\mathrm{Err}(\delta/2, h)}
\approx
\frac{C\delta^{-r}}{C(\delta/2)^{-r}}
=
2^{-r},
\end{equation*}
so that
\begin{equation*}
\mathrm{order}(\delta)
=
\log_2\left(
\frac{\mathrm{Err}(\delta, h)}{\mathrm{Err}(\delta/2, h)}
\right)
\approx -r.
\end{equation*}
From \eqref{eq:order_p=2}, we expect $r=2$. Furthermore, instead of only examining the error
\begin{equation*}
\mathrm{Err}(\delta,h),
\end{equation*}
we directly study \eqref{eq:numerical_comparison} individually in \hyperref[sec:exp2B]{Experiment 2B}. This allows us to check whether both quantities themselves exhibit the predicted $\delta^{-2}$ scaling. In this way, even when the error in \hyperref[sec:exp2A]{Experiment 2A} is noisy, we can still verify whether the main $\delta$-dependence predicted by the theory is reflected in the individual terms.

\subsection{Experiment Design}
 This section describes the detailed numerical setup used to empirically test \Cref{theorem:main}. As explained above, our numerical study consists of two separate groups of experiments. In both groups of experiments, we work on the unit square and discretize it with a uniform $n\times n$ grid. Let
\begin{equation*}
\mathcal{M}_n
=
\Bigl\{
m_{k}\in[0,1]^2
\Bigr\}_{k=1}^{n^2},
\qquad
h(n)
=
\frac{1}{n-1},
\end{equation*}
where $m_k$ are the grid points. For each $n$, we form a set of neighbor pairs (vertical/horizontal grid neighbors) and randomly sample a fixed number of such pairs for testing. 

Fix a covariance matrix $\Sigma \in \mathbb{R}^{2\times 2}$. Instead of sampling an independent Gaussian cloud at each grid point, we first generate a single prototype empirical cloud at the origin,
\begin{equation*}
X^{(0)}
=
\{x_r^{(0)}\}_{r=1}^{N_{c}},
\qquad
x_r^{(0)} \sim \mathcal{N}(0,\Sigma),
\end{equation*}
and define its empirical mean by
\begin{equation*}
\bar x
=
\frac{1}{N_{c}}
\sum_{r=1}^{N_{c}} x_r^{(0)}.
\end{equation*}
We then recenter the cloud by setting
\begin{equation*}
u_r
=
x_r^{(0)} - \bar x,
\qquad r=1,\dots,N_{c},
\end{equation*}
so that
\begin{equation*}
\frac{1}{N_{c}}
\sum_{r=1}^{N_{c}} u_r
=
0.
\end{equation*}
Thus the prototype empirical cloud is
$
U^{(0)}
=
\{u_r\}_{r=1}^{N_{c}}
$ and the empirical mean is exactly equal to zero.   For each grid point $m_k \in \mathcal{M}_n$, we define the empirical point cloud only by translation
\begin{equation*}
U_k
=
\{m_k + u_r\}_{r=1}^{N_{c}},
\end{equation*}
by construction, the empirical mean of $U_k$ is exactly $m_k$.

Fix a neighborhood radius $\delta>0$.
Around each grid point $m_k$, we sample $M$ centers uniformly in the Euclidean ball $B(m_k,\delta)$,
\begin{equation*}
c_{k,\ell}\sim \mathrm{Unif}\bigl(B(m_k,\delta)\bigr),
\qquad
\ell=1,\dots,M.
\end{equation*}
For each center $c_{k,\ell}$, we define a point cloud again by translating the prototype cloud
\begin{equation*}
V_{k,\ell}
=
\{c_{k,\ell} + u_r\}_{r=1}^{N_{c}},
\qquad
\ell=1,\dots,M.
\end{equation*}
We consider the same nonlinear map $f:\mathbb{R}^2\to\mathbb{R}^2$ as in \cite{singer08}, namely
\begin{equation}\label{eq:nonlinear_transformation_numerical}
f(x,y)=(x+y^3,  y-x^3).
\end{equation} 
We note that this map does not satisfy \Cref{assuption:dispacement_local}, but a less ``nonlinear'' version of it does (see \Cref{exp:assuption_D}). We decided to still use \eqref{eq:nonlinear_transformation_numerical} to demonstrate that the numerical results extend beyond \Cref{theorem:main}.

We apply $f$ to transform the point cloud
\begin{equation*}
\widetilde U_k
=
f(U_k),
\qquad
\widetilde V_{k,\ell}
=
f(V_{k,\ell}),
\end{equation*}
and to transform the mean
$
z_k
=
f(m_k).
$
Denote the empirical covariance of the transformed  centers $\{f(c_{k,\ell})\}_{\ell=1}^M$ by
\begin{equation*}
\widehat \Sigma^{f}_k
=
\mathrm{Cov}\Bigl(
\{f(c_{k,\ell})\}_{\ell=1}^{M}
\Bigr)
\in\mathbb{R}^{2\times 2}.
\end{equation*}
We compute the  symmetric locally centered Mahalanobis distance between transformed means by
\begin{equation}\label{eq:true_dm_def}
\widehat d_M^2\bigl(z_i,z_j\bigr)
=
\frac{1}{2} 
(z_i-z_j)^\top
\Bigl(
\bigl(\widehat \Sigma^{f}_i\bigr)^{\dagger}
+
\bigl(\widehat \Sigma^{f}_j\bigr)^{\dagger}
\Bigr)
(z_i-z_j).
\end{equation}

At each index $k$, we build an empirical covariance operator $F_k^{\dagger}$ from the family of transformed clouds $\{\widetilde U_k,\widetilde V_{k,\ell}\}_{\ell=1}^M$ (\Cref{algo:Pesudo_Inverse_Operater}). Given a neighbor pair $(i,j)$, compute the Mahalanobis distance between transformed clouds 
\begin{equation}\label{eq:approx_dm_def}
\dwmhat^2(\widetilde U_i,\widetilde U_j)
=
\frac{1}{2}
\Bigl[
v_{ij}^\top  F_i^{\dagger} v_{ij}
+
v_{ji}^\top  F_j^{\dagger} v_{ji}
\Bigr],
\end{equation}
where $v_{ij}$ denotes the discrete OT displacement vector (\Cref{Algo:OT_displacement}) from $\widetilde U_i$ to $\widetilde U_j$. Let $\mathcal{P}_n$ denote the set of tested neighbor pairs at grid size $n$, we measure the difference between \eqref{eq:true_dm_def} and \eqref{eq:approx_dm_def} by
\begin{equation}\label{eq:error_def}
\mathrm{Err}(\delta, h(n))
=
\frac{1}{|\mathcal{P}_n|}
\sum_{(i,j)\in\mathcal{P}_n}
\Bigl|
\hat d_M^2(z_i,z_j)
-
\dwmhat^2(\widetilde U_i,\widetilde U_j)
\Bigr|.
\end{equation}
In \hyperref[sec:exp1A]{Experiment 1A} and \hyperref[sec:exp1B]{1B}, where $\delta$ is fixed and $h(n)$ varies, we estimate the observed order by
\begin{equation}\label{eq:rate_def_h}
\log_2\left(\frac{\mathrm{Err}(\delta, h(n))}{\mathrm{Err}(\delta, h(2n-1))}\right)\approx 2.
\end{equation}
In \hyperref[sec:exp2A]{Experiment 2A}, we fix the grid size $n$ and hence fix $h$, while varying $\delta$. We expect
\begin{equation}\label{eq:rate_def_delta}
\log_2\left(
\frac{\mathrm{Err}(\delta, h(n))}{\mathrm{Err}(\delta/2, h(n))}
\right)
\approx -2.
\end{equation}
In \hyperref[sec:exp2B]{Experiment 2B}, instead of considering only the error \eqref{eq:error_def}, we directly examine the two quantities
$
\hat d_M^2(z_i,z_j) \text{ and }
\dwmhat^2(\widetilde U_i,\widetilde U_j)
$
separately, and test whether each of them individually shows the predicted $\delta^{-2}$ scaling. Thus the first group of experiments tests the second-order behavior in $h$, while the second group of experiments tests the $\delta^{-2}$ scaling directly.

\subsection[Numerical Results]{Numerical Results \footnotemark}
\footnotetext{The code for all numerical experiments is available at this
\href{https://github.com/ChuxiangboWang/ChuxiangboWang/tree/main/Wasserstein\%20Mahalanobis\%20Distances\%20for\%20Recovering\%20Latent\%20Geometry}
{GitHub repository}.}
We first test the $h$-dependence of the numerical error while keeping the neighborhood radius $\delta$ fixed.
\subsubsection{Experiment 1A: fixed \texorpdfstring{$\delta$}{delta}, varying \texorpdfstring{$h$}{h}} \label{sec:exp1A}

In this experiment, we work on the unit square $[0,1]^2$ and consider uniform grids with
\begin{equation*}
n\in\{5,9,17,33,65\},
\qquad
h(n)=\frac{1}{n-1}.
\end{equation*}
For each grid, we randomly sample $16$ horizontal or vertical neighbor pairs for testing. The neighborhood radius is fixed at
$
\delta=0.05,
$
and around each grid point we sample $M=100$ centers uniformly in the ball of radius $\delta$. The prototype cloud is fixed throughout the experiment, with covariance
$
\Sigma=(0.01)^2 I
$
chosen so that the standard deviation of the prototype cloud, $0.01$, is smaller than the smallest grid spacing considered, $h\approx 0.0156$ for $n=65$. The cloud size is
$
N_{c}=100.
$
We use the nonlinear map \eqref{eq:nonlinear_transformation_numerical}.
Since $\delta$ is fixed and neighboring grid points satisfy $\|m_\beta-m_\alpha\|=h$, \eqref{eq:order_p=2} predicts that the numerical error should decay quadratically in $h$. Therefore, we plot $\mathrm{Err}(\delta,h(n))$ against $h$ on a log-log scale and compute the observed order using \eqref{eq:rate_def_h}. 

The experiment is repeated $5$ times and  the average errors are reported. As shown in \Cref{fig:Experiment1} below, the mean error decreases steadily as the grid spacing $h$ shrinks and closely follows the slope-$2$ reference line over all tested grid sizes. The table of \Cref{fig:Experiment1} shows that when $h$ is reduced by half, the mean error decreases by approximately a factor of $4$, the observed $h$-orders range from approximately $1.94$ to $2.05$, agree with our predicted $O(h^2)$ behavior.
\begin{figure}[H]
    \centering
    \includegraphics[width=0.97\linewidth]{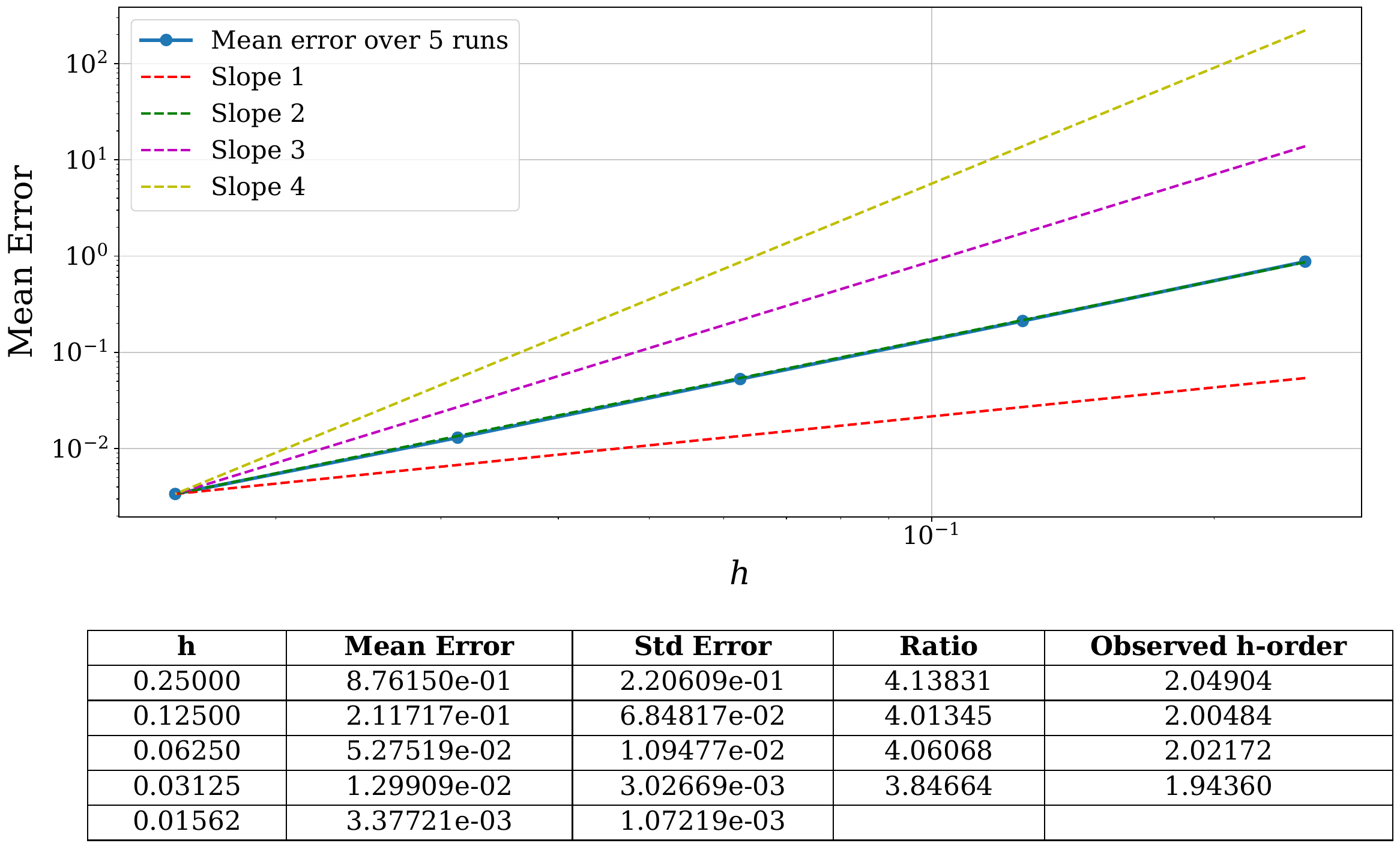}
    \caption{Log-log plot of $\mathrm{Err}(\delta,h(n))$ versus $h$ for \hyperref[sec:exp1A]{Experiment 1A} with fixed $\delta=0.05$. The dashed lines indicate reference slopes $1$, $2$, $3$, and $4$. The computed error is closest to the slope $2$, consistent with the predicted $O(h^2)$ behavior.}
    \label{fig:Experiment1}
\end{figure}
To further check that this observed behavior is robust under different choices of empirical sampling parameters, we study the sensitivity of the computation to $M$. In particular, we examine how the error changes when varying the number of nearby centers $M$, while keeping the same experimental setting as above.

\subsubsection{Experiment 1B: sensitivity to \texorpdfstring{$M$}{M}} \label{sec:exp1B}

Using the same setting as in \hyperref[sec:exp1A]{Experiment 1A}, we vary the empirical sampling parameters $M$ while keeping all other parameters fixed. Since $M$ only affects the finite-sample approximation, and not the theoretical $h$ dependence in \eqref{eq:order_p=2}, we expect the observed order in $h$ to remain approximately unchanged.

In this experiment, we compare two choices of the number of nearby centers,
$M=400$
and
$M=1600.$
For each choice of $M$, we compute $\mathrm{Err}(\delta,h(n))$ from \eqref{eq:error_def} and estimate the observed order by
\begin{equation*}
\log_2\left(
\frac{\mathrm{Err}(\delta,h(n))}{\mathrm{Err}(\delta,h(2n-1))}
\right).
\end{equation*}
We expect that increasing $M$ should improve the stability of the empirical covariance approximation and may reduce the absolute error, while the convergence trend in $h$ should remain close to second order. Again, we repeat the experiment $5$ times and report the averaged error at each grid size.
\begin{figure}[H]
    \centering

    \begin{minipage}{0.49\textwidth}
        \centering
        \includegraphics[width=\linewidth, trim={0 6.5cm 0 0}, clip]{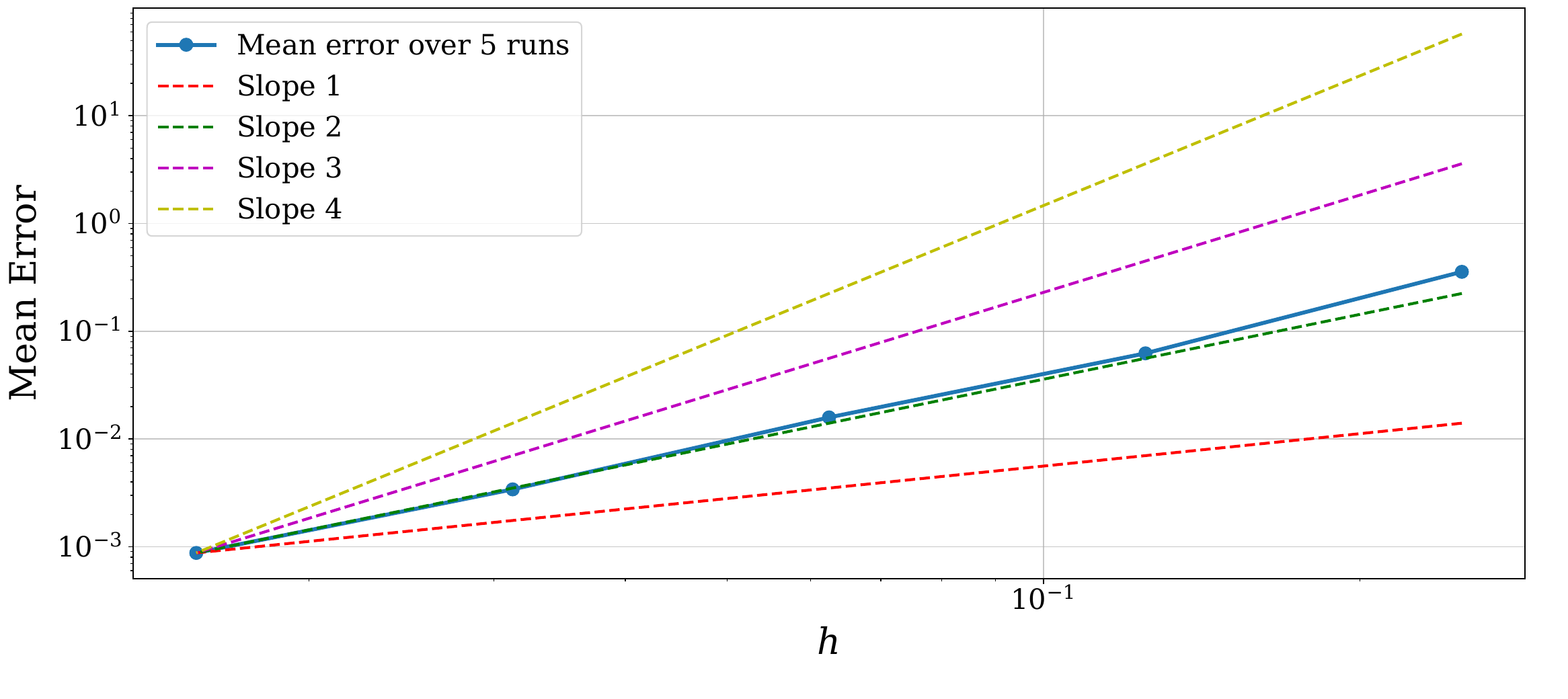}
    \end{minipage}
    \hfill
    \begin{minipage}{0.49\textwidth}
        \centering
        \includegraphics[width=\linewidth, trim={0 6.5cm 0 0}, clip]{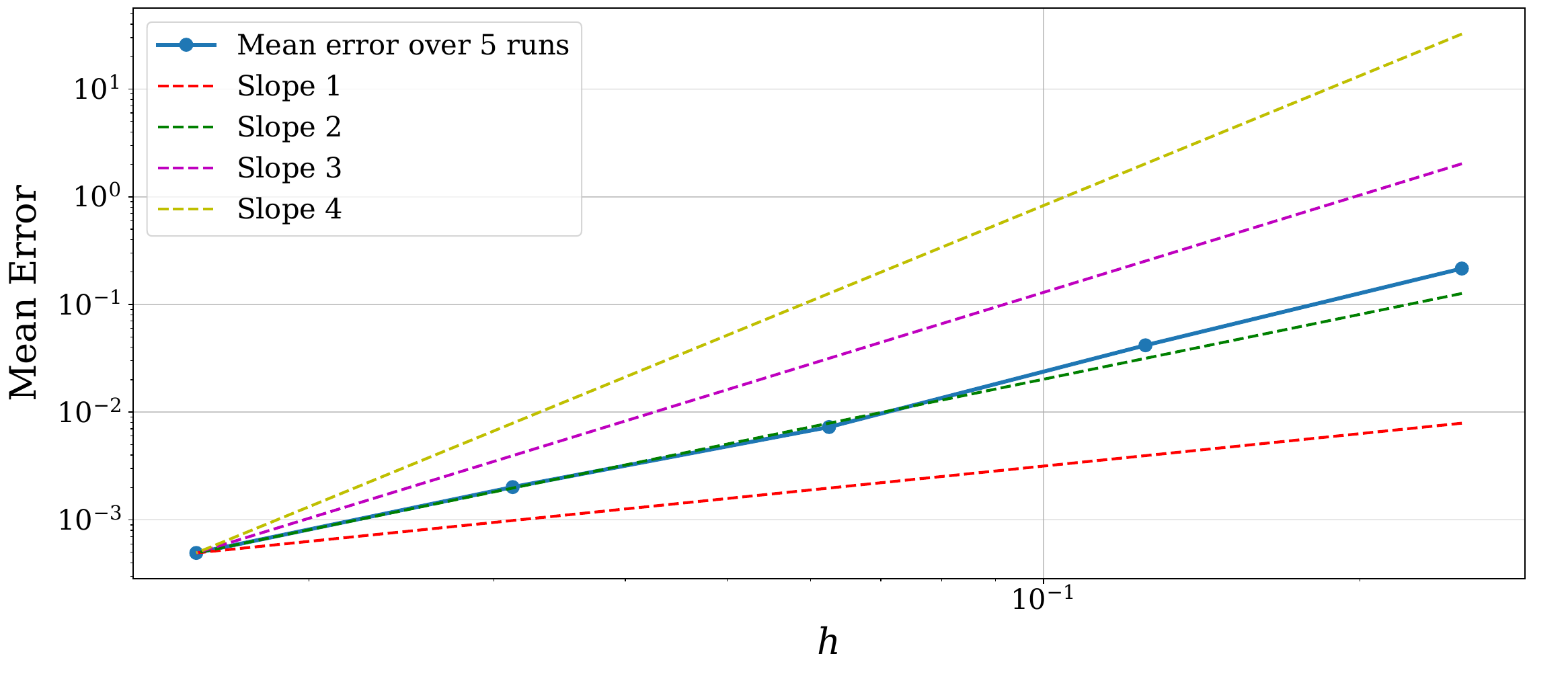}
    \end{minipage}

    \vspace{0.2cm}

    {\scriptsize
    \setlength{\tabcolsep}{3pt}
    \renewcommand{\arraystretch}{1.08}
    \resizebox{\textwidth}{!}{
    \begin{tabular}{c|cccc|cccc}
    \hline
    & \multicolumn{4}{c|}{$M=400$}
    & \multicolumn{4}{c}{$M=1600$} \\
    \cline{2-9}
    $h$
    & Mean Error & Std Error & Ratio & Observed $h$-order
    & Mean Error & Std Error & Ratio & Observed $h$-order \\
    \hline
    0.25000 & $3.56285\mathrm{e}{-01}$ & $1.17580\mathrm{e}{-01}$ & 5.70745 & 2.51285
            & $2.15494\mathrm{e}{-01}$ & $5.35671\mathrm{e}{-02}$ & 5.15754 & 2.36668 \\
    0.12500 & $6.24246\mathrm{e}{-02}$ & $1.48628\mathrm{e}{-02}$ & 3.93466 & 1.97624
            & $4.17823\mathrm{e}{-02}$ & $5.92121\mathrm{e}{-03}$ & 5.73757 & 2.52044 \\
    0.06250 & $1.58653\mathrm{e}{-02}$ & $2.68526\mathrm{e}{-03}$ & 4.65530 & 2.21887
            & $7.28223\mathrm{e}{-03}$ & $2.32371\mathrm{e}{-03}$ & 3.60828 & 1.85131 \\
    0.03125 & $3.40801\mathrm{e}{-03}$ & $9.78880\mathrm{e}{-04}$ & 3.88961 & 1.95962
            & $2.01820\mathrm{e}{-03}$ & $5.53029\mathrm{e}{-04}$ & 4.08515 & 2.03039 \\
    0.01562 & $8.76184\mathrm{e}{-04}$ & $2.51434\mathrm{e}{-04}$ & -- & --
            & $4.94033\mathrm{e}{-04}$ & $1.50506\mathrm{e}{-04}$ & -- & -- \\
    \hline
    \end{tabular}
    }
    }

    \caption{Log-log plots of $\mathrm{Err}(\delta,h(n))$ versus $h$ for \hyperref[sec:exp1B]{Experiment 1B}. Left: $M=400$. Right: $M=1600$. The dashed lines indicate reference slopes $1$, $2$, $3$, and $4$. The numerical values are shown in the table below the plots. In both cases, the computed error remains close to the slope $2$, showing approximate second-order behavior in $h$.}
    \label{fig:Experiment1B-1}
\end{figure}
As shown in \Cref{fig:Experiment1B-1}, both error curves decrease steadily as $h$ decreases and remain close to the slope-$2$ reference line over our grid scales. The error curve for $M=1600$ lies consistently below that for $M=400$, showing a reduction in the absolute error across all grid sizes. Table of \Cref{fig:Experiment1B-1} also shows that increasing $M$ reduces the absolute error at each grid size. For example, at the coarsest grid size $h=0.25$, the error decreases from about $3.5\times 10^{-1}$ when $M=400$ to about $2.1 \times 10^{-1}$ when $M=1600$. Similar reductions exist at the other grid sizes as well, consistent with our expectation.

\subsubsection{Experiment 2A: fixed \texorpdfstring{$h$}{h}, varying \texorpdfstring{$\delta$}{delta}}\label{sec:exp2A}
We now fix the grid size and study how the numerical error depends on the neighborhood radius $\delta$. In this experiment, we again use a uniform grid on $[0,1]^2$ but with
$
n=33,
h=\frac{1}{n-1}=\frac{1}{32},
$
and choose $16$ horizontal or vertical neighbor pairs. The neighborhood radius varies over
\begin{equation*}
\delta\in\{0.1, 0.05, 0.025, 0.0125\}.
\end{equation*}
For each value of $\delta$, we sample
$
M
=100
$
centers uniformly in the ball with radius $\delta$ around each grid point.
The prototype cloud is fixed throughout the experiment, with covariance
$
\Sigma=(0.01)^2 I
$
and cloud size
$
N_{c}=100.
$
The nonlinear map is taken to be the same as \eqref{eq:nonlinear_transformation_numerical}. We repeat the experiment
5 times and plot \eqref{eq:rate_def_delta}, see \Cref{fig:Experiment2}.

\begin{figure}[H]
    \centering
    \includegraphics[width=0.97\linewidth]{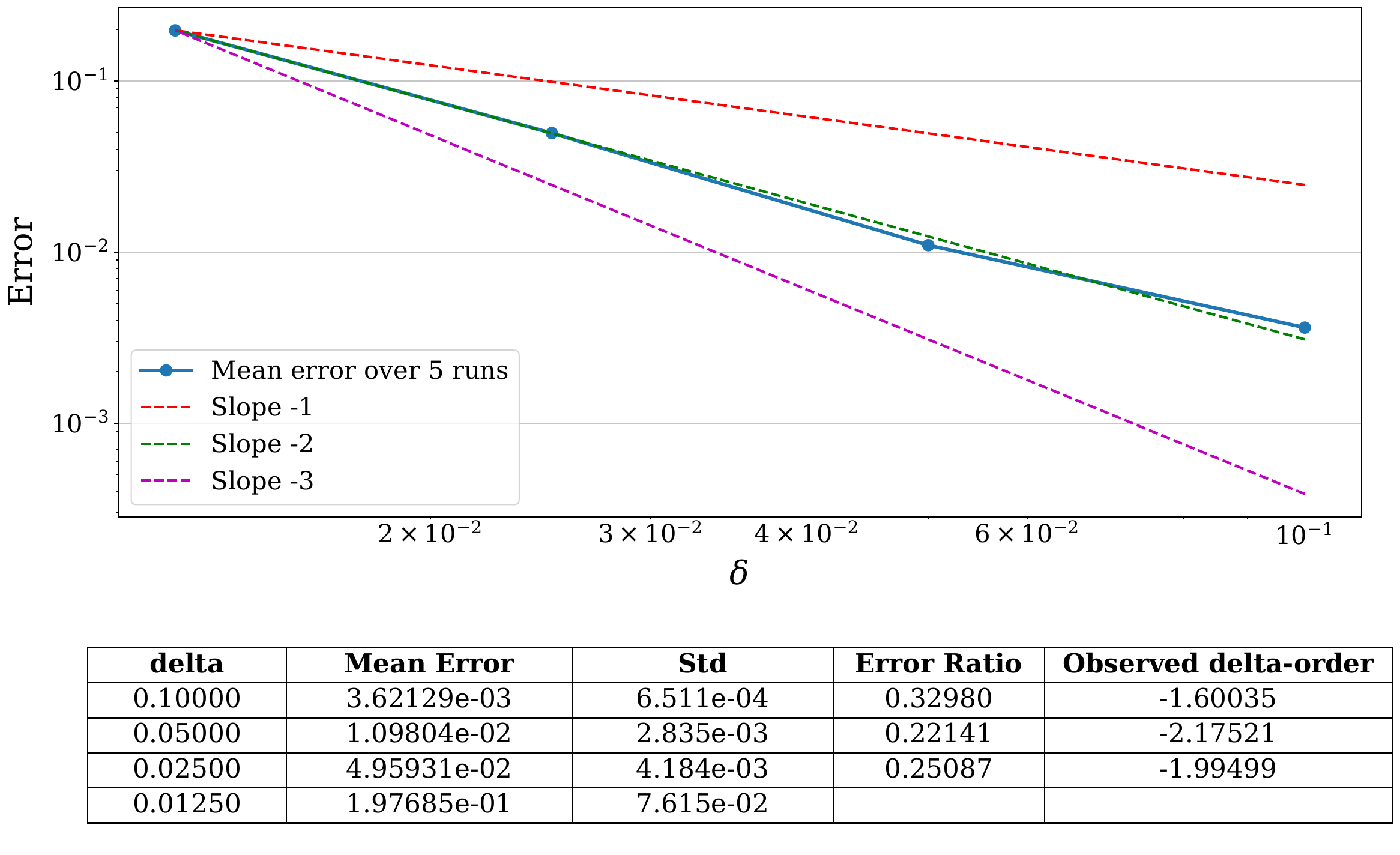}
    \caption{Log-log plot of the mean error versus $\delta$ for \hyperref[sec:exp2A]{Experiment 2A} with fixed $h=1/32$. The dashed lines indicate reference slopes $-1$, $-2$, and $-3$. The computed error is close to the slope $-2$, consistent with the predicted $O(\delta^{-2})$ behavior.}
    \label{fig:Experiment2}
\end{figure}
Table of \Cref{fig:Experiment2} shows that the numerical error increases as $\delta$ decreases, which is consistent with the prediction that the error should scale like a negative power of $\delta$. The observed $\delta$ orders are approximately in the range of the predicted value $-2$, but they are not always fully stable. Therefore, in the \hyperref[sec:exp2B]{Experiment 2B} below, rather than looking only at the error
$
\mathrm{Err}(\delta,h),
$
we directly examine the two quantities being compared,
\begin{equation*}
\hat d_M^2(z_i,z_j)
\qquad\text{and}\qquad
\dwmhat^2(\widetilde U_i,\widetilde U_j),
\end{equation*}
to check whether they themselves exhibit the expected $\delta^{-2}$ scaling.

\subsubsection{Experiment 2B: \texorpdfstring{$\delta$}{delta} order of the two compared distances} \label{sec:exp2B}

In this experiment, we further examine the $\delta$-dependence as mentioned above. With the exact same setting as in \hyperref[sec:exp2A]{Experiment 2A} , we fix the grid size and vary $\delta$. For each tested neighbor pair $(i,j)$, we compute both
$
\hat d_M^2(z_i,z_j)$
and $
\dwmhat^2(\widetilde U_i,\widetilde U_j),
$
and average these two quantities separately over the tested pairs. We repeat the experiment 5 times and report the mean values. We plot the averaged Euclidean Mahalanobis distance and the averaged Wasserstein Mahalanobis distance versus $\delta$.

\begin{figure}[H]
    \centering
    \begin{minipage}{0.48\textwidth}
        \centering
        \includegraphics[width=\linewidth, trim={0 6.5cm 0 0}, clip]{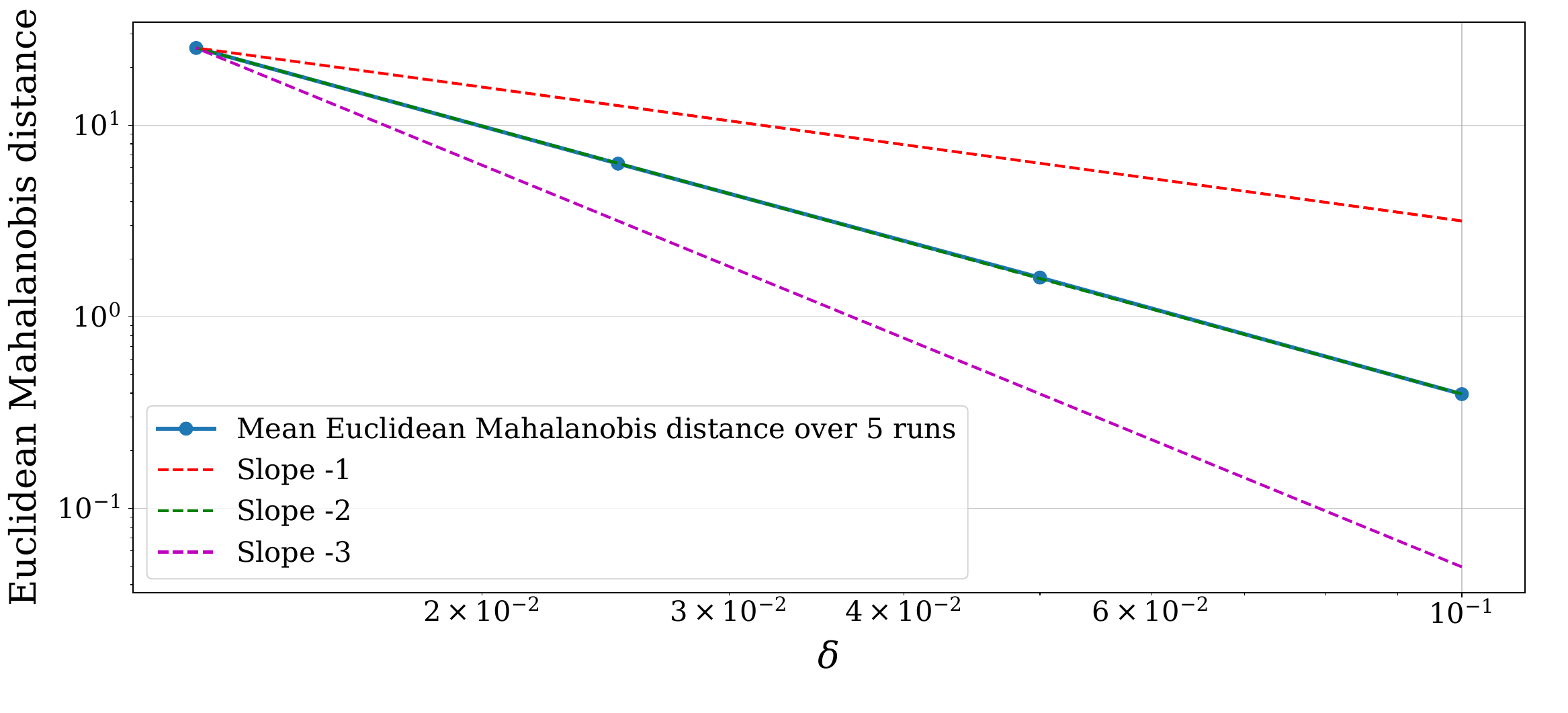}
    \end{minipage}
    \hfill
    \begin{minipage}{0.48\textwidth}
        \centering
        \includegraphics[width=\linewidth, trim={0 6.5cm 0 0}, clip]{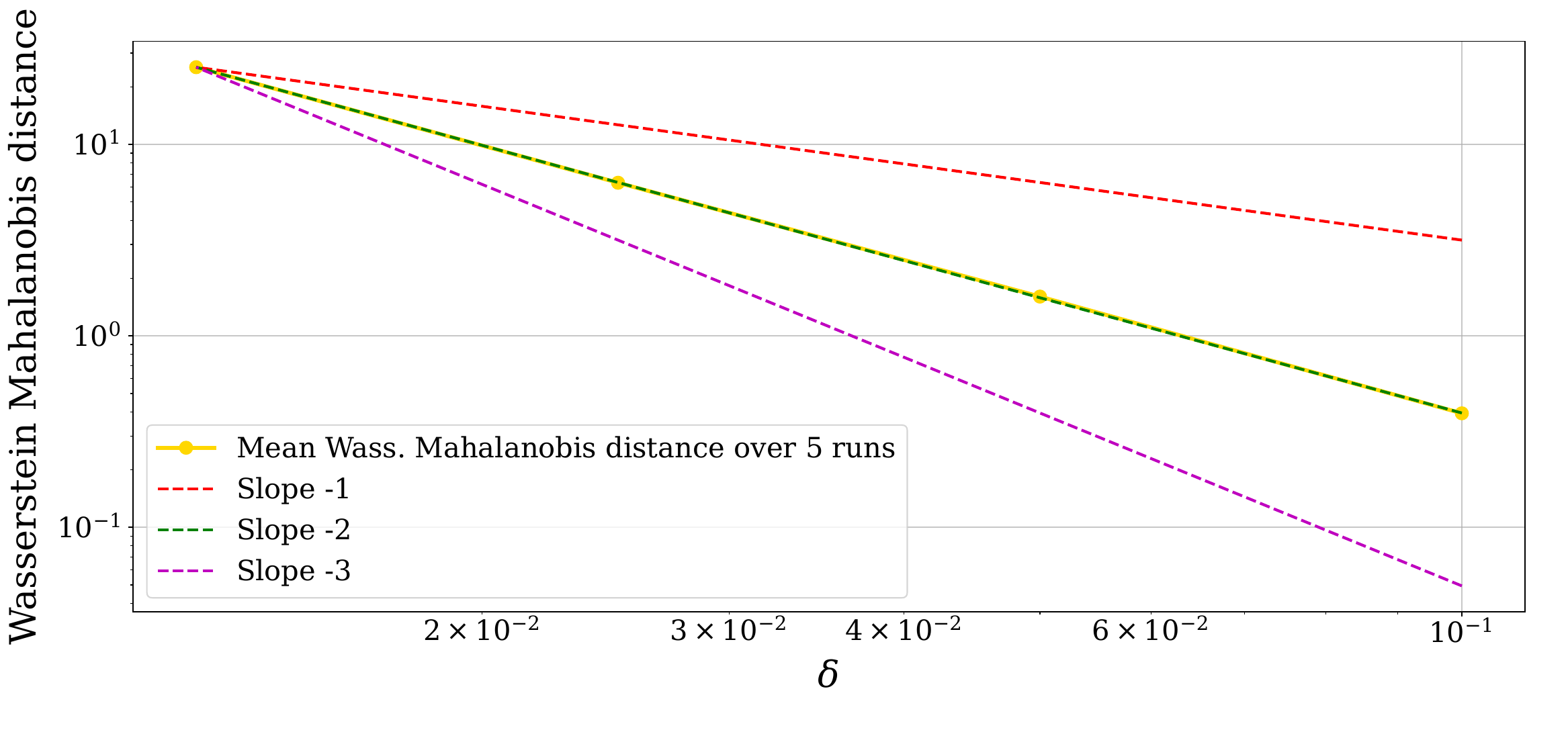}
    \end{minipage}

        \vspace{0.2cm}

    {\scriptsize
    \setlength{\tabcolsep}{3pt}
    \renewcommand{\arraystretch}{1.08}
    \resizebox{\textwidth}{!}{
\begin{tabular}{c|cc|cc|cc|cc}
\hline
$\delta$
& \multicolumn{4}{c|}{Euclidean Mahalanobis}
& \multicolumn{4}{c}{Wasserstein Mahalanobis} \\
\hline
& Mean & Std & Ratio & $\delta$-order
& Mean & Std & Ratio & $\delta$-order \\
\hline
0.10000 
& $3.95050\mathrm{e}{-01}$ & $5.080\mathrm{e}{-03}$ & 0.24640 & -2.02095
& $3.93823\mathrm{e}{-01}$ & $4.319\mathrm{e}{-03}$ & 0.24560 & -2.02560 \\

0.05000
& $1.60331\mathrm{e}{+00}$ & $4.113\mathrm{e}{-02}$ & 0.25411 & -1.97648
& $1.60350\mathrm{e}{+00}$ & $3.926\mathrm{e}{-02}$ & 0.25438 & -1.97492 \\

0.02500
& $6.30952\mathrm{e}{+00}$ & $8.748\mathrm{e}{-02}$ & 0.24896 & -2.00604
& $6.30345\mathrm{e}{+00}$ & $8.945\mathrm{e}{-02}$ & 0.24915 & -2.00489 \\

0.01250
& $2.53440\mathrm{e}{+01}$ & $6.522\mathrm{e}{-01}$ &  & 
& $2.52993\mathrm{e}{+01}$ & $6.355\mathrm{e}{-01}$ &  &  \\
\hline
\end{tabular}
}
}
    \caption{Log-log plots of the averaged Euclidean Mahalanobis distance (left) and the averaged Wasserstein Mahalanobis distance (right) versus $\delta$ for \hyperref[sec:exp2B]{Experiment 2B}. The dashed lines indicate reference slopes $-1$, $-2$, and $-3$. The computed quantities are very close to the slope $-2$.}
    \label{fig:experiment3-1}
\end{figure}
Tables of \Cref{fig:experiment3-1} show that for both averaged quantities, the observed $\delta$ orders are very close to $-2$ and exhibit very similar behavior. Combined with the less stable behavior observed in Experiment 2, this suggests that the main $\delta^{-2}$ scaling is indeed present in both compared terms, but their difference can be more sensitive to finite sampling and possible cancellation effects.

\subsection{Additional Experiment for \texorpdfstring{\Cref{remark:dwm_and_euclidean}}{Remark DWM and Euclidean} and \texorpdfstring{\Cref{cor:point_mass_limit}}{corollary point mass}}\label{subsec:Additional Experiment}
In this subsection, we provide additional numerical experiments for
\Cref{remark:dwm_and_euclidean} and \Cref{cor:point_mass_limit}. Recall that in \Cref{remark:dwm_and_euclidean}, when $p=2$, 
the squared
Wasserstein Mahalanobis distance should approximate the squared Euclidean distance
between the original means:
\begin{equation}\label{eq:remark_numerical_expectation}
\frac{\delta^2}{n+2}
\dwm^2(f_\sharp\alpha,f_\sharp\beta)
=
\|m_\alpha-m_\beta\|^2
+
O(\|m_\alpha-m_\beta\|^2).
\end{equation}
In the limiting case where $\Sigma\to 0_{n\times n}$, the Gaussian measures
collapse to point masses, and \Cref{cor:point_mass_limit} gives
\begin{equation}\label{eq:limit_Gaussian_pointmass}
\frac{\delta^2}{n+2}
\lim_{\quad\Sigma\to 0_{n\times n}}
\dwm^2(f_\sharp\alpha,f_\sharp\beta)
=
\|m_\alpha-m_\beta\|^2
+
O\left(
\delta\|m_\alpha-m_\beta\|^2
\right)+
O\left(
\|m_\alpha-m_\beta\|^4
\right).
\end{equation}
Therefore, we design two additional experiments. The first experiment directly
tests the second-order approximation in \eqref{eq:remark_numerical_expectation}.
The second experiment decreases $\Sigma$ toward
$0_{n\times n}$ and examines \eqref{eq:limit_Gaussian_pointmass}.

\subsubsection{Experiment 3A: comparison with the original Euclidean distance}\label{experiment:3A}

In this experiment, we use the same numerical setting as in \hyperref[sec:exp1A]{Experiment 1A}. The only difference from \hyperref[sec:exp1A]{Experiment 1A} is the quantity being compared.  Instead of
comparing
$
\dwmhat^2(\widetilde U_i,\widetilde U_j)$ with 
$\hat d_M^2(z_i,z_j),$
we compare the scaled Wasserstein Mahalanobis distance with the squared
Euclidean distance between the original grid centers. For each
tested pair $(i,j)$, we compute
\begin{equation*}
\|m_i-m_j\|^2
\qquad\text{and}\qquad
\frac{\delta^2}{4}
\dwmhat^2(\widetilde U_i,\widetilde U_j).
\end{equation*}
We define the numerical error by
\begin{equation}\label{eq:err_euc}
\mathrm{Err}_{\mathrm{Euc}}(\delta,h(n))
=
\frac{1}{|\mathcal P_n|}
\sum_{(i,j)\in\mathcal P_n}
\left|
\|m_i-m_j\|^2
-
\frac{\delta^2}{4}
\dwmhat^2(\widetilde U_i,\widetilde U_j)
\right|,
\end{equation}
and expect
\begin{equation*}
\log_2\left(
\frac{
\mathrm{Err}_{\mathrm{Euc}}(\delta,h(n))
}{
\mathrm{Err}_{\mathrm{Euc}}(\delta,h(2n-1))
}
\right) \approx2.
\end{equation*}

The results in \Cref{fig:Experiment3} show that the Euclidean error decreases consistently as h becomes smaller. The error curve follows the slope $2$ reference line closely over most grid scales, and the observed h-orders are generally around 2, aside from the coarsest grid. This is consistent with the predicted behavior $O(h^2)$.

\begin{figure}[H]
    \centering
    \includegraphics[width=0.97\linewidth]{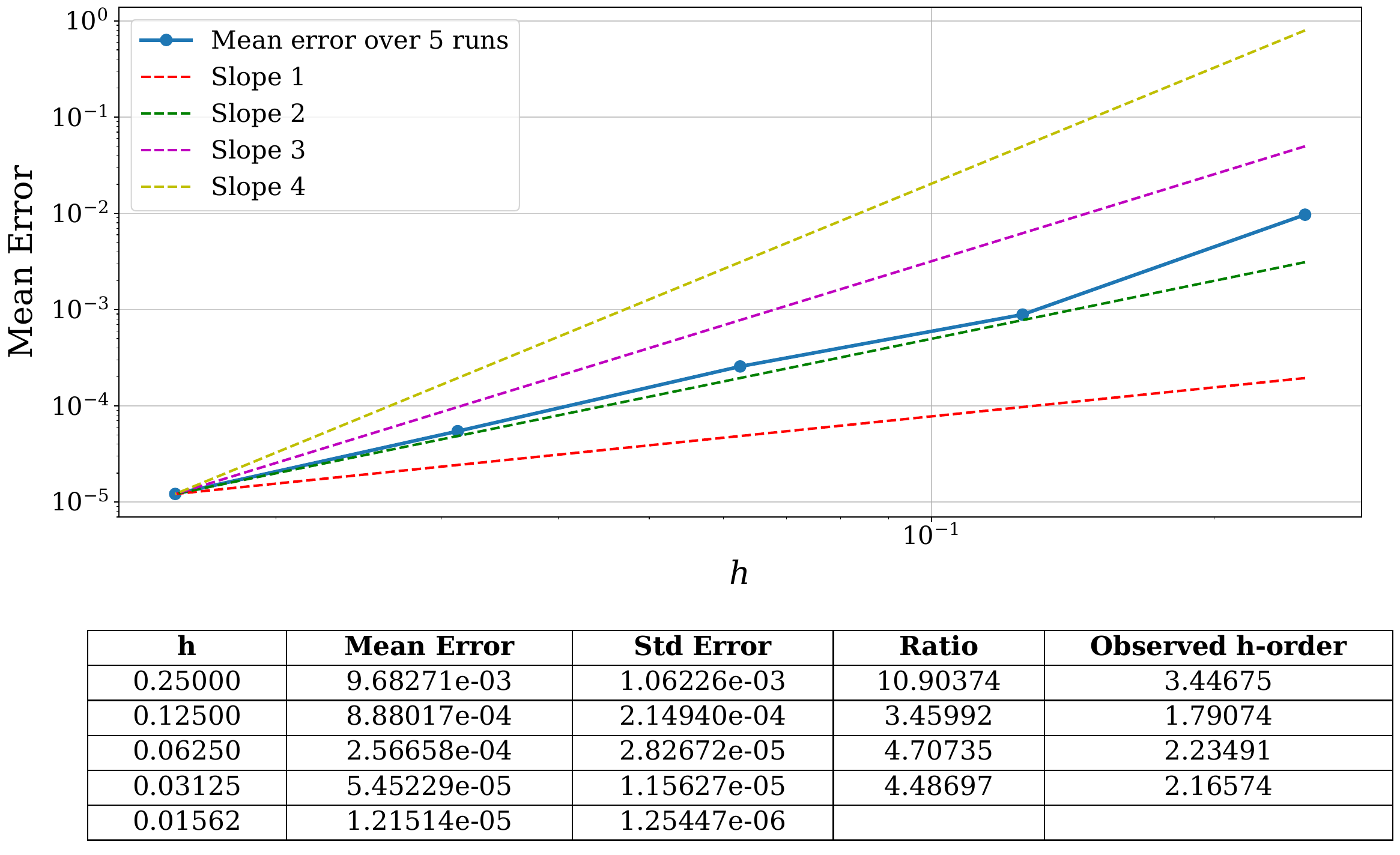}
    \caption{Log-log plot of
    $\mathrm{Err}_{\mathrm{Euc}}(\delta,h(n))$ versus $h$ for \hyperref[experiment:3A]{Experiment 3A}. The dashed lines indicate reference slopes $1$, $2$, $3$, and $4$. The computed error is closest to the slope $2$, consistent with the predicted $O(h^2)$ behavior. }
    \label{fig:Experiment3}
\end{figure}
\subsubsection{Experiment 3B: point-mass limit}\label{experiment:3B}
We next numerically examine the point-mass limit in
\Cref{cor:point_mass_limit}. We use the same experiment setting as in \hyperref[experiment:3A]{Experiment 3A} but set $\Sigma = 10^{-150} I$ and $N_c=1$. To reduce the finite sampling error we increase $M$ to $320000$. For neighboring grid points, $\|m_\alpha-m_\beta\|=h(n)$, so
\eqref{eq:err_euc} predicts
\begin{equation*}
\mathrm{Err}_{\mathrm{Euc}}(\delta,h(n))
=
O\!\left(\delta h^2(n)\right)
+
O\!\left(h^4(n)\right).
\end{equation*}

We consider two choices of the neighborhood radius. First, we set $\delta=10^{-8}$, this choice satisfies $\delta<h^2(n)$ for all tested grid sizes, the fourth-order term dominates, and we expect
\begin{equation*}
\log_2\left(
\frac{
\mathrm{Err}_{\mathrm{Euc}}(10^{-8},h(n))
}{
\mathrm{Err}_{\mathrm{Euc}}(10^{-8},h(2n-1))
}
\right)
\approx 4.
\end{equation*}
We then set $\delta=0.3$. Since $\delta>h(n)$ for all tested grid sizes, the second-order term dominates, and we expect
\begin{equation*}
\log_2\left(
\frac{
\mathrm{Err}_{\mathrm{Euc}}(0.3,h(n))
}{
\mathrm{Err}_{\mathrm{Euc}}(0.3,h(2n-1))
}
\right)
\approx 2.
\end{equation*}

As in \Cref{fig:Experiment3B_1,fig:Experiment3B_2}, the two choices of $\delta$ show the expected behavior. For $\delta=10^{-8}$ (\Cref{fig:Experiment3B_1}),
the observed $h$-orders show nearly fourth-order convergence, with some deviation at the
finer grid. For $\delta=0.3$ (\Cref{fig:Experiment3B_2}), the observed orders remain close to second order.
These results are consistent with \Cref{cor:point_mass_limit}.
\begin{figure}[H]
    \centering
    \includegraphics[width=0.97\linewidth]{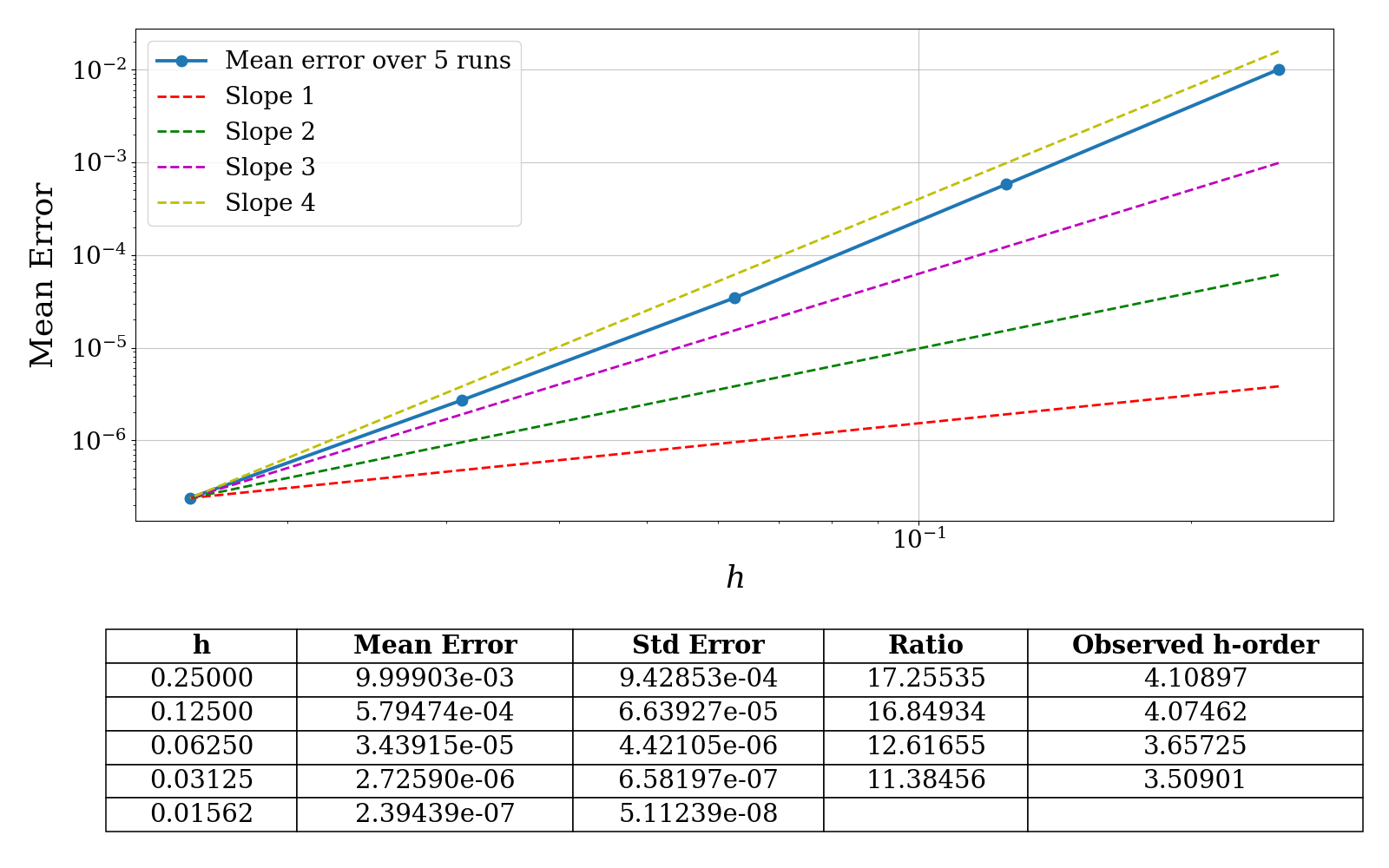}
    \caption{Log-log plot of $\mathrm{Err}_{\mathrm{Euc}}(\delta,h(n))$ versus $h$ for \hyperref[experiment:3B]{Experiment 3B} with $\delta=10^{-8}$. Since $\delta<h^2(n)$ for all tested grid sizes, the $O\left(\|m_\alpha-m_\beta\|^4\right)$ term dominates, and the computed error exhibits approximately fourth-order behavior. The dashed lines indicate reference
 slopes $1$, $2$, $3$, and $4$. }
    \label{fig:Experiment3B_1}
\end{figure}

\begin{figure}[H]
    \centering
    \includegraphics[width=0.97\linewidth]{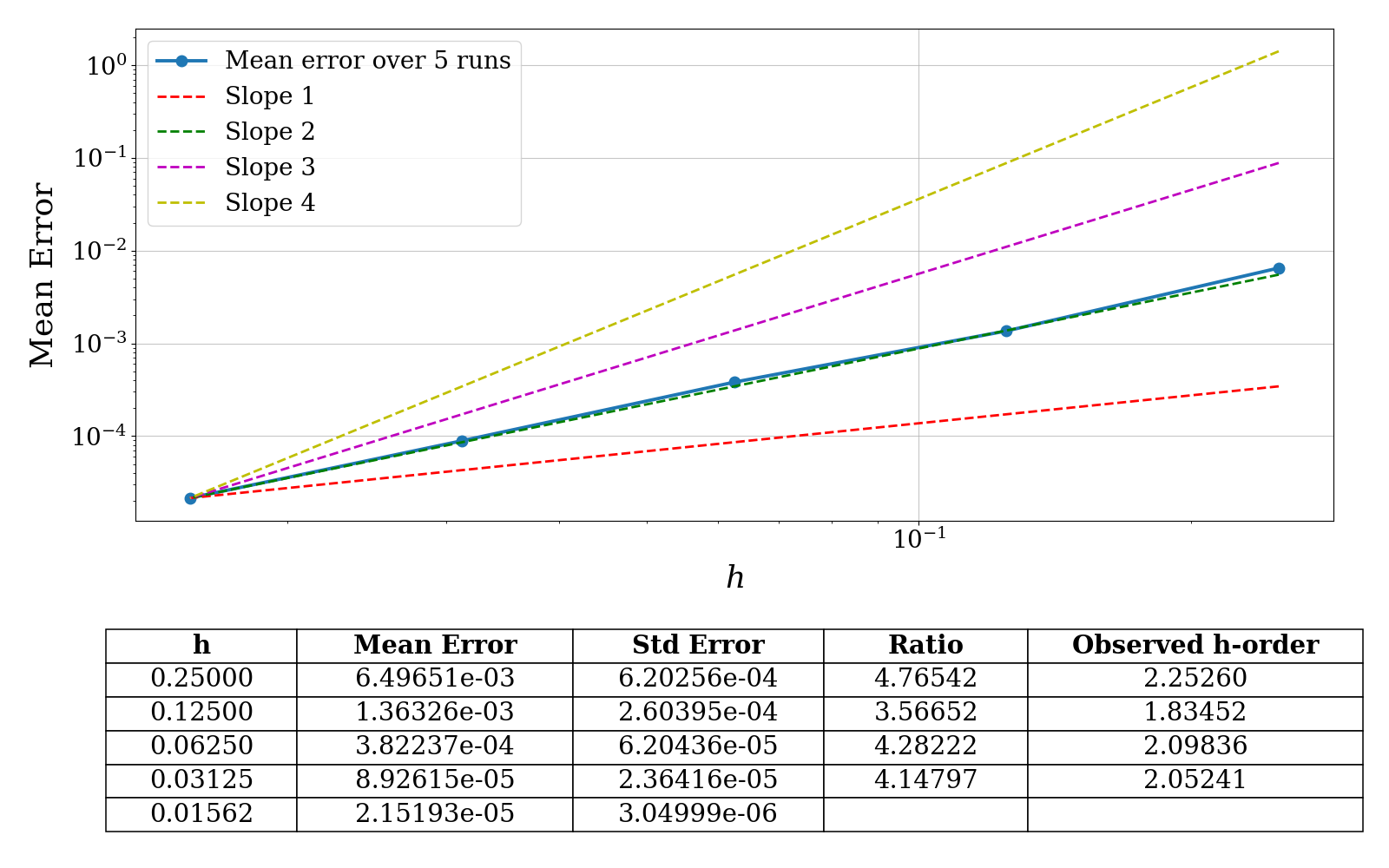}
    \caption{Log-log plot of $\mathrm{Err}_{\mathrm{Euc}}(\delta,h(n))$ versus $h$ for \hyperref[experiment:3B]{Experiment 3B} with $\delta=0.3$. Since $\delta>h(n)$ for all tested grid sizes, the   $O\left(\delta\|m_\alpha-m_\beta\|^2\right)$ term dominates, and the computed error exhibits approximately second-order behavior. The dashed lines indicate reference
 slopes $1$, $2$, $3$, and $4$. }
    \label{fig:Experiment3B_2}
\end{figure}

\section*{Acknowledgments}
The authors were partially supported by NSF award DMS-2410140.

\bibliographystyle{abbrv}
\bibliography{OT-references}

@article{xing2002distance,
  title={Distance metric learning with application to clustering with side-information},
author = {Xing, E. and Jordan, M. and Russell, S. J. and Ng, A.},
  journal={Advances in neural information processing systems},
  volume={15},
  year={2002}
}

@article{rousseeuw1990unmasking,
  title={Unmasking multivariate outliers and leverage points},
 author = {Rousseeuw, P. J. and Van Zomeren, B. C.},
  journal={Journal of the American Statistical association},
  volume={85},
  number={411},
  pages={633--639},
  year={1990},
  publisher={Taylor \& Francis}
}

@article{leys2018detecting,
  title={Detecting multivariate outliers: Use a robust variant of the Mahalanobis distance},
author = {Leys, C. and Klein, O. and Dominicy, Y. and Ley, C.},
  journal={Journal of experimental social psychology},
  volume={74},
  pages={150--156},
  year={2018},
  publisher={Elsevier}
}

@article{weinberger2009distance,
  title={Distance metric learning for large margin nearest neighbor classification.},
 author = {Weinberger, K. Q. and Saul, L. K.},
  journal={Journal of machine learning research},
  volume={10},
  number={2},
  year={2009}
}

@article{talmon2013empirical,
  title={Empirical intrinsic geometry for nonlinear modeling and time series filtering},
  author = {Talmon, R. and Coifman, R. R.},
  journal={Proceedings of the National Academy of Sciences},
  volume={110},
  number={31},
  pages={12535--12540},
  year={2013},
  publisher={National Academy of Sciences}
}

@article{CarlenGangbo2003,
  author = {Carlen, E. A. and Gangbo, W.},
  title     = {Constrained Steepest Descent in the 2-{W}asserstein Metric},
  journal   = {Annals of Mathematics},
  volume    = {157},
  number    = {3},
  pages     = {807--846},
  year      = {2003},
  doi       = {10.4007/annals.2003.157.807}
}

@inproceedings{zhang2024mahalanobis,
  title={Mahalanobis distance-based multi-view optimal transport for multi-view crowd localization},
  author = {Zhang, Q. and Zhang, K. and Chan, A. B. and Huang, H.},
  booktitle={European Conference on Computer Vision},
  pages={19--36},
  year={2024},
  organization={Springer}
}

@article{Hamm2025ManifoldLearningW2,
author = {Hamm, Keaton and Moosm\"{u}ller, Caroline and Schmitzer, Bernhard and Thorpe, Matthew},
title = {Manifold Learning in {W}asserstein Space},
journal = {SIAM Journal on Mathematical Analysis},
volume = {57},
number = {3},
pages = {2983-3029},
year = {2025},
doi = {10.1137/23M1617540}
}

@incollection{LMW2025linear,
  title={Linear Independent Component Analysis in {W}asserstein Space},
  author={Li, Shiying and Moosm{\"u}ller, Caroline and Wang, Chuxiangbo},
  booktitle={Advances in Data Science: Women in Data Science and Mathematics (WiSDM) 2023},
  pages={131--150},
  year={2025},
  publisher={Springer}
}

@inproceedings{kerdoncuff2020metric,
  title={Metric learning in optimal transport for domain adaptation},
  author={Kerdoncuff, Tanguy and Emonet, R{\'e}mi and Sebban, Marc},
  booktitle={International joint conference on artificial intelligence},
  pages={2162--2168},
  year={2020},
  organization={IJCAI}
}

@article{bonet2025sliced,
  title={Sliced-{W}asserstein distances and flows on {C}artan-{H}adamard manifolds},
  author={Bonet, Cl{\'e}ment and Drumetz, Lucas and Courty, Nicolas},
  journal={Journal of Machine Learning Research},
  volume={26},
  number={32},
  pages={1--76},
  year={2025}
}

@article{lin2024small,
  title={Small sample behavior of {W}asserstein projections, connections to empirical likelihood, and other applications},
  author={Lin, Sirui and Blanchet, Jose and Glynn, Peter and Nguyen, Viet Anh},
  journal={arXiv preprint arXiv:2408.11753},
  year={2024}
}

@Article{singer08,
  Title                    = {Non-linear independent component analysis with diffusion maps},
  Author                   = {Singer, A. and Coifman, R.~R.},
  Journal                  = {Applied and Computational Harmonic Analysis},
  Year                     = {2008},
  Number                   = {2},
  Pages                    = {226--239},
  Volume                   = {25},
  Doi                      = {10.1016/j.acha.2007.11.001}
}

@article{rubner2000earth,
  title={The earth mover's distance as a metric for image retrieval},
  author = {Rubner, Y. and Tomasi, C. and Guibas, L. J.},
  journal={Int J Comput Vis},
  volume={40},
  number={2},
  pages={99--121},
  year={2000},
  publisher={Springer}
}

@article{mathews18,
	author = {J. Mathews and M. Pouryahya and C. Moosm\"uller and I.~G. Kevrekidis and J. Deasy and A. Tannenbaum},
	title = {Molecular phenotyping using networks, diffusion, and topology: soft-tissue sarcoma},
	journal = {Scientific Reports}, 
    doi = {10.1038/s41598-019-50300-2},
    year = {2019},
}

@article{zhang2010understanding,
  title={Understanding bag-of-words model: a statistical framework},
 author = {Zhang, Y. and Jin, R. and Zhou, Z.-H.},
  journal={International Journal of Machine Learning and Cybernetics},
  volume={1},
  number={1-4},
  pages={43--52},
  year={2010},
  publisher={Springer}
}

@book{Villani2009,
	Author = {Villani, C.},
	Publisher = {Springer-Verlag Berlin Heidelberg},
	Series = {Grundlehren der mathematischen Wissenschaften},
	Title = {Optimal Transport: Old and New},
	Volume = {338},
	Year = {2009}}

@book{villani2003topics,
  title={Topics in Optimal Transportation},
  author={Villani, C.},
  isbn={9780821833124},
  lccn={2003040350},
  series={Graduate studies in mathematics},
  url={https://books.google.com/books?id=idyFAwAAQBAJ},
  year={2003},
  publisher={American Mathematical Society}
}

@article{peyre19,

    year = {2019},

    volume = {11},

    journal = {Foundations and Trends in Machine Learning},

    title = {Computational Optimal Transport},

    doi = {10.1561/2200000073},

    number = {5-6},

    pages = {355-607},

    author = {G. Peyr{\'e} and M. Cuturi}

}

@article{Roberto2013LocalMahalanobis,
title = {Locally centred Mahalanobis distance: A new distance measure with salient features towards outlier detection},
journal = {Analytica Chimica Acta},
volume = {787},
pages = {1-9},
year = {2013},
issn = {0003-2670},
doi = {https://doi.org/10.1016/j.aca.2013.04.034},
url = {https://www.sciencedirect.com/science/article/pii/S0003267013005618},
author = {Todeschini, R. and Ballabio, D. and Consonni, V. and Sahigara, F. and Filzmoser, P.}
}

@article{python2021pot,
author = {Flamary, R. and
          Courty, N. and
          Gramfort, A. and
          Alaya, M. Z. and
          Boisbunon, A. and
          Chambon, S. and
          Chapel, L. and
          Corenflos, A. and
          Fatras, K. and
          Fournier, N. and
          Gautheron, L. and
          Gayraud, N. T. H. and
          Janati, H. and
          Rakotomamonjy, A. and
          Redko, I. and
          Rolet, A. and
          Schutz, A. and
          Seguy, V. and
          Sutherland, D. J. and
          Tavenard, R. and
          Tong, A. and
          Vayer, T.},
  title   = {POT: Python Optimal Transport},
  journal = {Journal of Machine Learning Research},
  year    = {2021},
  volume  = {22},
  number  = {78},
  pages   = {1-8},
  url     = {http://jmlr.org/papers/v22/20-451.html}
}

@article{stewart1977pseudo-inverse-purturbation,
 ISSN = {00361445, 10957200},
 URL = {http://www.jstor.org/stable/2030248},
 author = {G. W. Stewart},
 journal = {SIAM Review},
 number = {4},
 pages = {634--662},
 publisher = {Society for Industrial and Applied Mathematics},
 title = {On the Perturbation of Pseudo-Inverses, Projections and Linear Least Squares Problems},
 urldate = {2024-12-05},
 volume = {19},
 year = {1977}
}

@misc{Tao2010Eigenvalues,
  author = {Tao, T.},
  title        = {254A, Notes 3a: Eigenvalues and sums of Hermitian matrices},
  year         = {2010},
  month        = jan,
  day          = {13},
  howpublished = {\url{https://terrytao.wordpress.com/2010/01/13/254a-notes-3a-eigenvalues-and-sums-of-hermitian-matrices/}},
  note         = {Accessed: 2015-05-25}
}

@article{Weyl1912Das_asymptotische,
	author = {Weyl, H.},
	date = {1912/12/01},
	doi = {10.1007/BF01456804},
	id = {Weyl1912},
	isbn = {1432-1807},
	journal = {Mathematische Annalen},
	number = {4},
	pages = {441--479},
	title = {Das asymptotische {V}erteilungsgesetz der {E}igenwerte linearer partieller {D}ifferentialgleichungen (mit einer {A}nwendung auf die Theorie der {H}ohlraumstrahlung)},
	url = {https://doi.org/10.1007/BF01456804},
	volume = {71},
	year = {1912}}

@article{mahalanobis1936generalized,
 ISSN = {0976836X, 09768378},
 URL = {https://www.jstor.org/stable/48723335},
 author = {P. C. Mahalanobis},
 journal = {Sankhyā: The Indian Journal of Statistics, Series A (2008-)},
 number = {},
 pages = {pp. S1--S7},
 publisher = {[Springer, Indian Statistical Institute]},
 title = {ON THE GENERALIZED DISTANCE IN STATISTICS},
 urldate = {2026-05-13},
 volume = {80},
 year = {2018}
}

@article{de2000mahalanobis,
  title={The {M}ahalanobis distance},
  author = {De Maesschalck, R. and Jouan-Rimbaud, D. and Massart, D. L.},
  journal={Chemometrics and intelligent laboratory systems},
  volume={50},
  number={1},
  pages={1--18},
  year={2000},
  publisher={Elsevier}
}

@InProceedings{verde2008comparing,
author = {Verde, R. and Irpino, A.},
editor="Brito, Paula",
title="Comparing Histogram Data Using a {M}ahalanobis--{W}asserstein Distance",
booktitle="COMPSTAT 2008",
year="2008",
publisher="Physica-Verlag HD",
address="Heidelberg",
pages="77--89",
isbn="978-3-7908-2084-3"
}

@INPROCEEDINGS{lima2025robust,
  author = {Lima, D. and Sampaio, G. and Rocha, C. and Viana, J. and Gouveia, C.},
  booktitle={2025 IEEE International Conference on Systems, Man, and Cybernetics (SMC)}, 
  title={A Robust Phase Mapping Approach Using the {M}ahalanobis-{W}asserstein Distance}, 
  year={2025},
  volume={},
  number={},
  pages={7035-7040},
  doi={10.1109/SMC58881.2025.11343320}}

@article{otto2000geometry,
author = {Otto, F.},
year = {2000},
month = {04},
pages = {},
title = {The Geometry of Dissipative Evolution Equations: The Porous Medium Equation},
volume = {26},
journal = {Comm Partial Differential Equations},
doi = {10.1081/PDE-100002243}
}

@book{Groetsch1977inverse,
  author    = {Groetsch, C. W.},
  title     = {Generalized Inverses of Linear Operators: Representation and Approximation},
  series    = {Monographs and Textbooks in Pure and Applied Mathematics},
  volume    = {37},
  publisher = {Marcel Dekker, Inc.},
  address   = {New York and Basel},
  year      = {1977},
  isbn      = {0-8247-6615-6}
}

@article{Stuart2010inverse, 
title={Inverse problems: A Bayesian perspective}, volume={19}, 
DOI={10.1017/S0962492910000061}, 
journal={Acta Numerica}, 
author={Stuart, A. M.}, 
year={2010}, 
pages={451–559}}

\appendix
\renewcommand{\thesection}{Appendix \Alph{section}}
\addtocontents{toc}{\protect\setlength{\cftsecnumwidth}{7em}}

\renewcommand{\thesubsection}{\Alph{section}.\arabic{subsection}}
\titleformat{\subsection}
  {\normalfont\large\bfseries}
  {\Alph{section}.\arabic{subsection}}
  {1em}
  {}

\section{Proofs}\label{sec:AppendixA}
\subsection{Proof of \texorpdfstring{\Cref{Gaussian Linear Theorem}}{}} \label{Proof: Gaussian linear theorem}

\begin{proof}[Proof of \Cref{Gaussian Linear Theorem}]
        Since $f$ is an affine transformation, we have
\begin{align*}
    f_{\sharp} \alpha = \mathcal{N}(A m_\alpha + b, \Sigma_{f_{\sharp} \alpha}), \quad
    f_{\sharp} \beta = \mathcal{N}(A m_\beta + b, \Sigma_{f_{\sharp} \beta}),
\end{align*}
where the covariance matrices are
$
    \Sigma_{f_{\sharp} \alpha} = \Sigma_{f_{\sharp} \beta} = A \Sigma A^\top.
$
The optimal transport map between $f_{\sharp} \alpha$ and $f_{\sharp} \beta$ is
$    T^{f_{\sharp} \alpha}_{f_{\sharp} \beta}(y) = y + A ( m_\alpha - m_\beta  ).
$
This leads to constant OT displacements
\begin{equation*}
    g^{f_{\sharp} \alpha}_{f_{\sharp} \beta} = T^{f_{\sharp} \alpha}_{f_{\sharp} \beta}(y) - y = A ( m_\alpha - m_\beta  ).
\end{equation*}

Let  $N_{f_\sharp\beta} = \{g^{f_{\sharp} \beta_j}_{f_{\sharp} \beta} \}_{j=1}^K$ be the set that contains the displacement functions from $f_{\sharp} \beta$ to its neighboring Gaussians $ f_{\sharp} \beta_j$, where
$
    f_{\sharp} \beta_j = \mathcal{N} \left( A m_{\beta_j} + b, \Sigma_{f_{\sharp} \beta_j}\right),
$
with $\Sigma_{f_{\sharp} \beta_j} = A \Sigma A^\top$.
It follows that each displacement function is
\begin{equation*}
    g^{f_{\sharp} \beta_j}_{f_{\sharp} \beta}(y) = T^{f_{\sharp} \beta_j}_{f_{\sharp} \beta}(y) - y = A (m_{\beta_j} - m_\beta ).
\end{equation*}
By definition we have
$
  F_{N_{f_\sharp\beta}} =\frac{1}{K}\sum_{k=1}^K g^{f_{\sharp} \beta_k}_{f_{\sharp} \beta} \langle g^{f_{\sharp} \beta_k}_{f_{\sharp} \beta}, \cdot \rangle_{f_\sharp \beta}.
$
Let $\phi \in L^2(f_\sharp\beta)$ and let $ x\in \mathbb{R}^n$. Applying $F_{N_{f_\sharp\beta}}$ to $\phi$, we get

\begin{align*}
      F_{N_{f_\sharp\beta}} (\phi)(x)  &= \frac{1}{K}\sum_{k=1}^K g^{f_{\sharp} \beta_k}_{f_{\sharp} \beta}(x) \langle g^{f_{\sharp} \beta_k}_{f_{\sharp} \beta}, \phi \rangle_{f_\sharp \beta} 
      = \frac{1}{K}\sum_{k=1}^K g^{f_{\sharp} \beta_k}_{f_{\sharp} \beta}(x) \int_{\mathbb{R}^n} (g^{f_{\sharp} \beta_k}_{f_{\sharp} \beta}(y))^\top \phi(y) d f_\sharp \beta (y)\\
      & = \frac{1}{K}\sum_{k=1}^K [A (m_{\beta_k} - m_\beta)] \int_{\mathbb{R}^n} [A (m_{\beta_k} - m_\beta)]^\top \phi(y) d f_\sharp \beta (y)\\
    & = \frac{1}{K}\sum_{k=1}^K [A (m_{\beta_k} - m_\beta)]  [A (m_{\beta_k} - m_\beta)]^\top  \int_{\mathbb{R}^n}\phi(y) d f_\sharp \beta (y)\\
    & = \frac{1}{K}VV^\top  \int_{\mathbb{R}^n}\phi(y)  d f_\sharp \beta (y),
\end{align*}
where $V = [A(m_{\beta_1} - m_\beta), \cdots ,A(m_{\beta_K} - m_\beta)]$. 

The pseudo-inverse of $F_{N_{f_\sharp\beta}}$ is defined by
\begin{equation}\label{pesudo iverse linear case}
  F_{N_{f_\sharp\beta}}^\dagger = \sum_{k=1}^K \sum_{\ell=1}^K (B^\dagger )_{k\ell} 
   g^{f_{\sharp} \beta_k}_{f_{\sharp} \beta}
    \langle g^{f_{\sharp} \beta_\ell}_{f_{\sharp} \beta} , \cdot\rangle_{f_\sharp \beta},
\end{equation}
where $B_{kl}= \langle F_{N_{f_\sharp\beta}}(g^{f_{\sharp} \beta_\ell}_{f_{\sharp} \beta} ), g^{f_{\sharp} \beta_k}_{f_{\sharp} \beta}  \rangle _ {f_\sharp \beta}$. To find $B$, we first compute
\begin{equation*}
  F_{N_{f_\sharp\beta}}(g^{f_{\sharp} \beta_\ell}_{f_{\sharp} \beta} )
   = 
  \frac{1}{K} V  V^\top \int_{\mathbb{R}^n}g^{f_{\sharp} \beta_\ell}_{f_{\sharp} \beta} (y) d f_\sharp \beta (y) =   \frac{1}{K} V  V^\top [A (m_{\beta_l} - m_\beta)].
\end{equation*}
We then get the following.
\begin{align*}
  B_{k\ell}
   &= 
  \langle F_{N_{f_\sharp\beta}}(g^{f_{\sharp} \beta_\ell}_{f_{\sharp} \beta}), g^{f_{\sharp} \beta_k}_{f_{\sharp} \beta}\rangle_{f_\sharp \beta}
   = 
  \langle 
    \tfrac{1}{K} V  V^\top [A (m_{\beta_l} - m_\beta)] ,
     [A (m_{\beta_k} - m_\beta)]
  \rangle_{f_\sharp \beta}\\
   &= 
  \frac{1}{K} 
  \langle 
     V^\top [A (m_{\beta_l} - m_\beta)], 
     V^\top [A (m_{\beta_k} - m_\beta)]
  \rangle_{f_\sharp \beta}
   = 
  \frac{1}{K}
    ( V^\top [A (m_{\beta_l} - m_\beta)] )^\top
    ( V^\top [A (m_{\beta_k} - m_\beta)] )\\
    &=\frac{1}{K}\sum_{i=1}^K ( V^\top [A (m_{\beta_l} - m_\beta)])_i( V^\top [A (m_{\beta_k} - m_\beta)])_i.
\end{align*}
We also have $( V^\top [A (m_{\beta_k} - m_\beta)])_i = \langle g^{f_{\sharp} \beta_i}_{f_{\sharp} \beta},g^{f_{\sharp} \beta_k}_{f_{\sharp} \beta}\rangle_{f_\sharp \beta} = [A (m_{\beta_i} - m_\beta)]^\top [A (m_{\beta_k} - m_\beta)]$, and thus 
$
  B_{k\ell} =
  \frac{1}{K} ( V^\top V)^2_{k\ell}.
$
The Moore--Penrose inverse of $B$ can be expressed as $B^\dagger = (\frac{1}{K}( V^\top V)^2)^\dagger$.
Applying $F_{N_{f_\sharp\beta}}^\dagger$ to $\phi $ yields
\begin{align*}
  F_{N_{f_\sharp\beta}}^\dagger(\phi)(x)
   &= 
  \sum_{k=1}^K 
   \sum_{\ell=1}^K 
      B^\dagger_{k\ell} 
      g^{f_{\sharp} \beta_k}_{f_{\sharp} \beta}(x)  \langle g^{f_{\sharp} \beta_\ell}_{f_{\sharp} \beta},\phi\rangle_{f_\sharp \beta}
  =
 \sum_{k=1}^K 
   \sum_{\ell=1}^K g^{f_{\sharp} \beta_k}_{f_{\sharp} \beta} (x)B^\dagger_{k\ell} \left[  \int_{\mathbb{R}^n} (g^{f_{\sharp} \beta_\ell}_{f_{\sharp} \beta}(y))^\top\phi(y) d f_\sharp \beta (y)\right] \\
  &=
 \sum_{k=1}^K 
   \sum_{\ell=1}^K [A (m_{\beta_k} - m_\beta)]B^\dagger_{k\ell}   [A (m_{\beta_l} - m_\beta)]^\top\left[\int_{\mathbb{R}^n} \phi(y) d f_\sharp \beta (y)\right] \\
   &= VB^\dagger V^\top \int_{\mathbb{R}^n}\phi(y) d f_\sharp \beta (y).
\end{align*}
To further simplify, we apply the SVD to $V$: 
$V = U S W^T$. This implies $V^T V = W S^2 W^T$ and $(V^T V)^2 = W S^4 W^T$. Taking the pseudo-inverse yields $((V^T V)^2)^\dagger = W S^{-4} W^T$.
It then follows that
\begin{equation*}
V  ((V^T V)^2)^\dagger V^T
= (U S W^T)(W S^{-4} W^T)(U S W^T)^T
= U S^{-2} U^T
= (V V^T)^\dagger.
\end{equation*}
Recall that
\begin{equation*}
B^\dagger
= (\frac{1}{K}(V^T V)^2)^\dagger
= K ((V^T V)^2)^\dagger.
\end{equation*}
Therefore,
\begin{equation*}
V B^\dagger V^T
= K V ((V^T V)^2)^\dagger V^T
= K(V V^T)^\dagger
= \left(\frac{1}{K} V V^T\right)^\dagger
\end{equation*}
and
\begin{align*}
  F_{N_{f_\sharp\beta}}^\dagger(\phi)(x) &= (\frac{1}{K} V V^\top)^\dagger \int_{\mathbb{R}^n}\phi(y) d f_\sharp \beta (y).
\end{align*}
We know $\int_{\mathbb{R}^n} g^{f_{\sharp} \alpha}_{f_{\sharp} \beta}(y) d  f_\sharp \beta (y) = \int_{\mathbb{R}^n} \left[ A ( m_\alpha - m_\beta  ) \right] d  f_\sharp \beta (y) = A ( m_\alpha - m_\beta  )$ and
therefore we have
\begin{equation*}
    F^\dagger_{N_{f_\sharp\beta}} \left( g^{f_{\sharp} \alpha}_{f_{\sharp} \beta}  \right) = \left[ \frac{1}{K} \sum_{j=1}^K \left[ A (m_{\beta_j} - m_\beta) \right] \left[ A (m_{\beta_j} - m_\beta) \right]^\top \right]^\dagger \left[ A (  m_\alpha - m_\beta ) \right].
\end{equation*}
Recognizing that
\begin{equation*}
    C_{f(m_\beta)} = \frac{1}{K} \sum_{j=1}^K \left[ A (m_{\beta_j} - m_\beta) \right] \left[ A (m_{\beta_j} - m_\beta) \right]^\top
\end{equation*}
is the local covariance matrix of $\{f(m_{\beta_j})\}_{j=1^K}$, we can rewrite $F^\dagger_{N_{f_\sharp\beta}} \left( g^{f_{\sharp} \alpha}_{f_{\sharp} \beta}  \right)$ as
\begin{equation*}
    F^\dagger_{N_{f_\sharp\beta}} \left( g^{f_{\sharp} \alpha}_{f_{\sharp} \beta} \right) = C_{f(m_\beta)}^\dagger \left[ A ( m_\alpha - m_\beta  ) \right].
\end{equation*}
This leads to the first part inner product of the Wasserstein Mahalanobis distance:
\begin{align*}
    \left\langle g^{f_{\sharp} \alpha}_{f_{\sharp} \beta} ,   F^\dagger_{N_{f_\sharp\beta}} \left( g^{f_{\sharp} \alpha}_{f_{\sharp} \beta}  \right) \right\rangle_{f_{\sharp} \beta}
    &= \left[A ( m_\alpha - m_\beta  ) \right]^\top \left[ C_{f(m_\beta)}^\dagger \left(A ( m_\alpha - m_\beta  ) \right) \right] \\
    &= \left[ A ( m_\alpha -m_\beta  ) \right]^\top C_{f(m_\beta)}^\dagger \left[ A ( m_\alpha -m_\beta  ) \right].
\end{align*}
Since $A ( m_\alpha -m_\beta  )  =  f(m_\alpha) -f(m_\beta) $, we have
\begin{equation*}
    \left\langle g^{f_{\sharp} \alpha}_{f_{\sharp} \beta} ,   F^\dagger_{N_{f_\sharp\beta}} \left( g^{f_{\sharp} \alpha}_{f_{\sharp} \beta}  \right) \right\rangle_{f_{\sharp} \beta} = \left[  f(m_\alpha) -f(m_\beta)  \right]^\top C_{f(m_\beta)}^\dagger \left[  f(m_\alpha) -f(m_\beta)  \right].
\end{equation*}
Similarly, considering the neighbors $\{ \alpha_i \}_{i=1}^N$ of $\alpha$, we can derive the corresponding inner product

\begin{equation*}
    \left\langle g^{f_{\sharp} \beta}_{f_{\sharp} \alpha} ,   F^\dagger_{N_{f_{\sharp} \alpha}} \left( g_{f_{\sharp} \alpha}^{f_{\sharp} \beta}  \right) \right\rangle_{f_{\sharp} \alpha} = \left[ f(m_\beta) - f(m_\alpha) \right]^\top C_{f(m_\alpha)}^\dagger \left[ f(m_\beta) - f(m_\alpha) \right].
\end{equation*}
It follows that
\begin{equation*}
    \dwm^2(f_{\sharp} \alpha, f_{\sharp} \beta) =  d_M^2(f(m_\alpha), f(m_\beta)),
\end{equation*}
with respect to the neighborhoods $\{\alpha_i\}_{i=1}^N$, $\{\beta_j\}_{j=1}^K$ and $\{m_{\alpha_i}\}_{i=1}^N$, $\{m_{\beta_j}\}_{j=1}^K$. 
\end{proof}

\subsection{Proof of \texorpdfstring{\Cref{remark: displacement function}}{}} \label{proof:remark: displacement function}

\begin{proof}[Proof of \Cref{remark: displacement function}]
        Let $T^{\rho}_{\nu}$ be the OT map from $\nu$ to $\rho$, namely, 
$
T^{\rho}_{\nu}(x) = x + ( m_\rho - m_\nu ).
$
Define $\widetilde{T} = f \circ T^{\rho}_{\nu} \circ f^{-1}$, which implies
$
\widetilde{T}_{\sharp}(f_{\sharp}\nu) = f_{\sharp}\rho.
$
Therefore $\widetilde{T}$ is a map from $f_{\sharp}\nu$ to $f_{\sharp}\rho$. By definition of the OT displacement function we obtain
\begin{align*}
\int_{\mathbb{R}^n} \| g_{ f_{\sharp}\nu}^{f_{\sharp}\rho}(y) \|^2   d(f_{\sharp}\nu)(y)
= \int_{\mathbb{R}^n} \| T_{ f_{\sharp}\nu}^{f_{\sharp}\rho}(y) - y \|^2   d(f_{\sharp}\nu)(y)
\le \int_{\mathbb{R}^n} \| \widetilde{T}(y) - y \|^2   d(f_{\sharp}\nu)(y).
\end{align*}
Consider a change of variables $y = f(x)$. Since $x \sim \nu$, we have
\begin{align*}
\widetilde{T}(y) = f \left(T^{\rho}_{\nu}(f^{-1}(y))\right) 
= f \left(T^{\rho}_{\nu}(x)\right)
= f \left(x +  m_\rho - m_\nu \right).
\end{align*}
This implies
\begin{equation*}
\int_{\mathbb{R}^n} \| \widetilde{T}(y) - y \|^2   d(f_{\sharp}\nu)(y)
= \int_{\mathbb{R}^n} \| f(x + m_\rho -m_\nu ) - f(x) \|^2   d\nu(x).
\end{equation*}

Consider the Taylor expansion
$
f(x +  m_\rho - m_\nu ) - f(x)
= J_f(x) ( m_\rho - m_\nu ) + O\left(\| m_\rho -m_\nu  \|^2\right),
$
which yields
\begin{align*}
\| f(x + m_\rho -m_\nu ) - f(x) \|^2
&= \|J_f(x) ( m_\rho - m_\nu ) + O\left(\| m_\rho -m_\nu\|^2 \right)  \|^2 \\
&\le \|J_f(x) ( m_\rho - m_\nu )\|^2 + O\left(\| m_\rho -m_\nu\|^3 \right)  \\
&\le \| J_f(x) \|_2^{2}   \| m_\rho -m_\nu  \|^2
+ O\left(\| m_\rho -m_\nu \|^3\right).
\end{align*}
Thus,
\begin{align*}
\| g_{ f_{\sharp}\nu}^{f_{\sharp}\rho} \|_{L^2(f_{\sharp}\nu)}^{2}
& \le \int_{\mathbb{R}^n} \| f(x + m_\rho -m_\nu ) - f(x) \|^2   d\nu(x) \\
&\le \| m_\rho - m_\nu \|^2 \int_{\mathbb{R}^n} \| J_f(x) \|_2^{ 2}   d\nu(x)
+ O \left(\| m_\rho - m_\nu \|^3\right).
\end{align*}
Since we have that
$
g^{\tilde f_\sharp\rho}_{\tilde f_\sharp\nu}
=
J_f(m_\nu)(m_\rho-m_\nu)
$
is a constant function, it follows that
\begin{align*}
\big\| g_{ f_{\sharp}\nu}^{f_{\sharp}\rho} - g^{\tilde f_\sharp\rho}_{\tilde f_\sharp\nu} \big\|_{L^2(f_{\sharp}\nu)}^{2}
&\le 2\| g_{ f_{\sharp}\nu}^{f_{\sharp}\rho} \|_{L^2(f_{\sharp}\nu)}^{2}
+ 2\| g^{\tilde f_\sharp\rho}_{\tilde f_\sharp\nu} \|_{L^2(f_{\sharp}\nu)}^{2} \\
&\le 2\| m_\rho - m_\nu \|^2
\left[
\int_{\mathbb{R}^n} \| J_f(x) \|_2^{2}   d\nu(x)
+ \| J_f(m_\nu) \|_2^{2}
\right]
+ O(\| m_\rho - m_\nu \|^3). \qedhere
\end{align*}

\end{proof}

\subsection{Detailed computation for \texorpdfstring{\Cref{example:displacement_function}}{}}\label{computation_of_example_1}
\begin{proof}[Computation for \Cref{example:displacement_function}]
    Let
$
	f(x)=x+x^3$,
which is a strictly increasing function. Let
\begin{equation*}
\alpha=\mathcal N(m_\alpha,\sigma^2),
\qquad
\beta=\mathcal N(m_\beta,\sigma^2),
\quad \text{and} \quad
h=m_\beta-m_\alpha .
\end{equation*}
Denote the first-order approximation of $f$ at $m_\alpha$ by
\begin{equation*}
\tilde f^{m_\alpha}(x)
=
f(m_\alpha)+f'(m_\alpha)(x-m_\alpha),
\quad \text{where} \quad
f'(x)=1+3x^2 .
\end{equation*}
Since $\alpha$ and $\beta$ have the same variance, the optimal transport map
from $\alpha$ to $\beta$ is the translation
$
T_\alpha^\beta(x)=x+h .
$
Define the map
$
 T = f \circ T_\alpha^\beta \circ f^{-1}.
$
Then $T$ is increasing and satisfies
$
T_\sharp(f_\sharp\alpha)=f_\sharp\beta$. Therefore, $T$ is the OT map
from $f_\sharp\alpha$ to $f_\sharp\beta$. Therefore, if $y=f(x)$, then
\begin{equation*}
    g_{f_\sharp\alpha}^{f_\sharp\beta}(y)
    =
    T(y)-y
    =
    f(x+h)-f(x) =
    h+3x^2h+3xh^2+h^3.
\end{equation*}
Since $\tilde f^{m_\alpha}$ is affine, $\tilde f^{m_\alpha}_\sharp\alpha$ and $\tilde f^{m_\alpha}_\sharp\beta$ remain Gaussian. Therefore, the displacement function from $\tilde f^{m_\alpha}_\sharp\alpha$ to $\tilde f^{m_\alpha}_\sharp\beta$ is 
\begin{equation*}
g_{\tilde f^{m_\alpha}_\sharp\alpha}^{\tilde f^{m_\alpha}_\sharp\beta}(y)
=\tilde f^{m_\alpha} (m_\beta) - \tilde f^{m_\alpha}(m_\alpha) = (1+3m_\alpha^2)h,
\end{equation*}
which is a constant function. 
For $X\sim\mathcal N(m_\alpha,\sigma^2)$ we obtain
\begin{align*}
\|g_{f_\sharp\alpha}^{f_\sharp\beta}
-
g_{\tilde f^{m_\alpha}_\sharp\alpha}^{\tilde f^{m_\alpha}_\sharp\beta}
\|_{L^2(f_\sharp\alpha)}^2
& =
\mathbb E\left[
    \left(
    h+3X^2h+3Xh^2+h^3
    -
    (1+3m_\alpha^2)h
    \right)^2
\right] \notag \\
 &=
\mathbb E\left[
    \left(
    3h(X^2-m_\alpha^2)+3Xh^2+h^3
    \right)^2
\right].
\end{align*}
Using Gaussian moments for $X\sim\mathcal N(m_\alpha,\sigma^2)$, we get
\begin{equation*}
\mathbb E[X]=m_\alpha,\quad
\mathbb E[X^2]=m_\alpha^2+\sigma^2,\quad
\mathbb E[X^3]=m_\alpha^3+3m_\alpha\sigma^2,\quad
\mathbb E[X^4]=m_\alpha^4+6m_\alpha^2\sigma^2+3\sigma^4.
\end{equation*}
This implies
\begin{align*}
\mathbb E\Bigl[\bigl(3h(X^2-m_\alpha^2)+3h^2X+h^3\bigr)^2\Bigr]
&=
9(4m_\alpha^2\sigma^2+3\sigma^4)h^2
+54m_\alpha\sigma^2h^3
+(9m_\alpha^2+15\sigma^2)h^4
+6m_\alpha h^5
+h^6,
\end{align*}
leading to
\begin{equation*}
\|g_{f_\sharp\alpha}^{f_\sharp\beta} - g_{\tilde f^{m_\alpha}_\sharp\alpha}^{\tilde f^{m_\alpha}_\sharp\beta}\|_{L^2(f_\sharp\alpha)}^2
=
9(4m_\alpha^2\sigma^2+3\sigma^4)h^2
+54m_\alpha\sigma^2h^3
+(9m_\alpha^2+15\sigma^2)h^4
+6m_\alpha h^5
+h^6.
\end{equation*}
This expression has leading order $O(h^2)$.
\end{proof}

\subsection{Detailed computation for \texorpdfstring{\Cref{exp:assuption_D}}{}}\label{computation_of_example_2}
\begin{proof}[Computation for \Cref{exp:assuption_D}]
Let $m_\nu=(a,b)$, $m_{\nu_i}=(a_i,b_i)$, $m_{\nu_i}-m_\nu
=
(\Delta a_i,\Delta b_i)$.
Since $\nu$ and $\nu_i$ have the same covariance, the optimal transport map from $\nu$ to $\nu_i$ is
\begin{equation*}
T_{\nu}^{\nu_i}(x,y)
=
(x+\Delta a_i,y+\Delta b_i).
\end{equation*}
Denote by $\tilde f$ the linear approximation of  $f $ at  $m_\nu$ \eqref{eq:affine-approx}. By direct computation we obtain
\begin{equation*}
\tilde f(x,y)
=
\begin{pmatrix}
a+\epsilon b^3+(x-a)+3\epsilon b^2(y-b)\\
b-\epsilon a^3-3\epsilon a^2(x-a)+(y-b)

\end{pmatrix}.
\end{equation*}
Since $\tilde f$ is affine, the OT displacement from $\tilde f_\sharp\nu$ to $\tilde f_\sharp\nu_i$ is
\begin{equation*}
g^{\tilde f_\sharp\nu_i}_{\tilde f_\sharp\nu}
=
\tilde f(m_{\nu_i})-\tilde f(m_\nu)
=
\begin{pmatrix}
\Delta a_i+3\epsilon b^2\Delta b_i\\
\Delta b_i-3\epsilon a^2\Delta a_i
\end{pmatrix}.
\end{equation*}
Let $T_i=T^{f_\sharp\nu_i}_{f_\sharp\nu}$
be the OT map from  $f_\sharp\nu $ to  $f_\sharp\nu_i $. Define $\widehat T_i
=
f\circ T_{\nu}^{\nu_i}\circ f^{-1}$ such that $(\widehat T_i)_\sharp(f_\sharp\nu)=f_\sharp\nu_i$. Since both $T_i$, $\widehat T_i$ push $f_\sharp\nu$ forward to $f_\sharp\nu_i$, we have
\begin{equation}\label{eq:T_i_equal}
\int T_i(z) d(f_\sharp\nu)(z)
=
\int \widehat T_i(z) d(f_\sharp\nu)(z) = \int w d(f_\sharp\nu_i)(w).
\end{equation}
Using \eqref{eq:T_i_equal} and the optimality of $T_i$ we get
\begin{align*}
\left\|
g^{f_\sharp\nu_i}_{f_\sharp\nu}
-
g^{\tilde f_\sharp\nu_i}_{\tilde f_\sharp\nu}
\right\|_{L^2(f_\sharp\nu)}^2=\int
\left\|
T_i(z)-z-g^{\tilde f_\sharp\nu_i}_{\tilde f_\sharp\nu}
\right\|^2
 d(f_\sharp\nu)(z) \leq
\int
\left\|
\widehat T_i(z)-z-g^{\tilde f_\sharp\nu_i}_{\tilde f_\sharp\nu}
\right\|^2
 d(f_\sharp\nu)(z).
\end{align*}
Apply change of variable with  $z=f(x,y) $, where $(X,Y)\sim\mathcal N((a,b),I_2)$, we obtain
\begin{align*}
    \int
\left\|
\widehat T_i(z)-z-g^{\tilde f_\sharp\nu_i}_{\tilde f_\sharp\nu}
\right\|^2
 d(f_\sharp\nu)(z) 
 &= \int \left\|
f\circ T_{\nu}^{\nu_i} (u)-f(u)-g^{\tilde f_\sharp\nu_i}_{\tilde f_\sharp\nu}(f(u))
\right\|^2
 d\nu(u)\\
&=\mathbb E
\left[
\left\|
f(X+\Delta a_i,Y+\Delta b_i)
-
f(X,Y)
-
J_f(a,b)
\begin{pmatrix}
\Delta a_i\\
\Delta b_i
\end{pmatrix}
\right\|^2
\right].
\end{align*}
Using Gaussian moments we obtain
\begin{align*}
&
\mathbb E
\left[
\left\|
f(X+\Delta a_i,Y+\Delta b_i)
-
f(X,Y)
-
J_f(a,b)
\begin{pmatrix}
\Delta a_i\\
\Delta b_i
\end{pmatrix}
\right\|^2
\right].
\\
&\leq
\epsilon^2
\Bigl[
\left(27+36a^2\right)(\Delta a_i)^2
+
54a(\Delta a_i)^3
+
\left(15+9a^2\right)(\Delta a_i)^4
+
6a(\Delta a_i)^5
+
(\Delta a_i)^6
\\
&+
\left(27+36b^2\right)(\Delta b_i)^2
+
54b(\Delta b_i)^3
+
\left(15+9b^2\right)(\Delta b_i)^4
+
6b(\Delta b_i)^5
+
(\Delta b_i)^6
\Bigr],
\end{align*}
Since $m_{\nu_i}\in B(m_\nu,\delta)$, $(\Delta a_i)^2+(\Delta b_i)^2
=
\|m_{\nu_i}-m_\nu\|^2$, $|\Delta a_i|,|\Delta b_i|\leq\delta$, $|a|,|b|\leq\|m_\nu\|$, a direct calculation gives
\begin{equation*}
\left\|
g^{f_\sharp\nu_i}_{f_\sharp\nu}
-
g^{\tilde f_\sharp\nu_i}_{\tilde f_\sharp\nu}
\right\|_{L^2(f_\sharp\nu)}^2
\leq
\epsilon^2
\Bigl[
27+36\|m_\nu\|^2
+54\|m_\nu\|\delta
+(15+9\|m_\nu\|^2)\delta^2
+6\|m_\nu\|\delta^3
+\delta^4
\Bigr]
\|m_{\nu_i}-m_\nu\|^2.
\end{equation*}
Thus
\begin{equation*}
\left\|
g^{f_\sharp\nu_i}_{f_\sharp\nu}
-
g^{\tilde f_\sharp\nu_i}_{\tilde f_\sharp\nu}
\right\|_{L^2(f_\sharp\nu)}^2
\leq
c_\xi\|m_{\nu_i}-m_\nu\|^2,
\end{equation*}
where
\begin{equation}\label{eq:c_xi_example}
c_\xi
=
\epsilon^2
\Bigl[
27+36\|m_\nu\|^2
+54\|m_\nu\|\delta
+(15+9\|m_\nu\|^2)\delta^2
+6\|m_\nu\|\delta^3
+\delta^4
\Bigr].
\end{equation}
Let $\lambda^+$ and $\lambda^-$ be the largest and smallest eigenvalues of $J_f(m_\nu)^{\top}J_f(m_\nu)$, respectively. A calculation gives
\begin{equation*}
\begin{aligned}
\lambda^\pm
&=
1+\frac{9\epsilon^2}{2}(a^4+b^4)
\pm
\frac{1}{2}
\sqrt{
81\epsilon^4(a^4-b^4)^2
+
36\epsilon^2(a^2-b^2)^2
}.
\end{aligned}
\end{equation*}
Let $\kappa_\nu
=
\frac{\|J_f(m_\nu)\|_2}
{\sigma_{\min}(J_f(m_\nu))}
=
\sqrt{\frac{\lambda^+}{\lambda^-}}$. Since $n=2$, the upper bound in \Cref{assuption:dispacement_local} becomes
\begin{equation*}
c_\xi
<
\min
\left\{
1,
\frac{(\lambda^-)^4}{57600},
\frac{(\lambda^-)^4}
{57600(\lambda^+)^3}
\right\}.
\end{equation*}
Since $m_\nu,m_{\nu_i}\in[0,1]^2$ we have $\|m_\nu\|\leq\sqrt{2}$ and \eqref{eq:c_xi_example} can be simplified to
\begin{equation*}
c_\xi
\leq
\epsilon^2
\Bigl[
99
+54\sqrt{2}\delta
+33\delta^2
+6\sqrt{2}\delta^3
+\delta^4
\Bigr].
\end{equation*}
Moreover, we have $a^4+b^4\leq2$, $|a^4-b^4|\leq1$, $|a^2-b^2|\leq1$, which implies
\begin{align*}
\lambda^-
\geq
1-
\frac{1}{2}
\sqrt{
81\epsilon^4+36\epsilon^2
}, \qquad\lambda^+
\leq
1+9\epsilon^2+
\frac{1}{2}
\sqrt{
81\epsilon^4+36\epsilon^2.
}
\end{align*}
By direct computation, one can show that a sufficient condition for \Cref{assuption:dispacement_local} to hold uniformly for all $m_\nu,m_{\nu_i}\in[0,1]^2$ is 
\begin{equation*}
\epsilon^2
\Bigl[
99
+54\sqrt{2}\delta
+33\delta^2
+6\sqrt{2}\delta^3
+\delta^4
\Bigr] <\frac{
\left(
1-
\frac{1}{2}
\sqrt{
81\epsilon^4+36\epsilon^2
}
\right)^4
}
{
57600
\left(
1+9\epsilon^2
+
\frac{1}{2}
\sqrt{
81\epsilon^4+36\epsilon^2
}
\right)^3
}.
\end{equation*}
When $\epsilon\to0$, the left hand side goes to $0$, while the right hand side goes to $\frac{1}{57600}$.
Thus, $|\epsilon|$ can be chose sufficiently small such that \Cref{assuption:dispacement_local} is satisfied.
\end{proof}

\subsection{Proof of \texorpdfstring{\Cref{cor:point_mass_limit}}{}}

\begin{proof}[Proof of \Cref{cor:point_mass_limit}]\label{Proof:point_mass_limit}
    As $\Sigma\to0_{n\times n}$, Gaussian measures collapse to point masses at their respective means. Denote by $\tilde f^{m_\beta}$, $\tilde f^{m_\alpha}$ the linear approximations of $f$ at $m_\beta$ and $m_\alpha$, respectively, see \eqref{eq:affine-approx}. We then have
\begin{equation*}
\begin{aligned}
\lim_{\quad\Sigma\to0_{n\times n}}
H\left(
\tilde f^{m_\beta}_\sharp\beta,
f_\sharp\beta
\right)
&=0,\\
\lim_{\quad\Sigma\to0_{n\times n}}
H\left(
\tilde f^{m_\alpha}_\sharp\alpha,
f_\sharp\alpha
\right)
&=0.
\end{aligned}
\end{equation*}
Moreover, the displacement functions satisfy
\begin{equation*}
\lim_{\quad\Sigma\to0_{n\times n}}
g_{f_\sharp\beta}^{f_\sharp\alpha}
=
f(m_\alpha)-f(m_\beta)
\quad \text{and} \quad
\lim_{\quad\Sigma\to0_{n\times n}}
g_{\tilde f^{m_\beta}_\sharp\beta}
 ^{\tilde f^{m_\beta}_\sharp\alpha}
=
J_f(m_\beta)(m_\alpha-m_\beta).
\end{equation*}
By Taylor expansion
\begin{equation*}
f(m_\alpha)-f(m_\beta)
=
J_f(m_\beta)(m_\alpha-m_\beta)
+
O\left(
\|m_\alpha-m_\beta\|^2
\right),
\end{equation*}
hence,
\begin{equation*}
\lim_{\quad\Sigma\to0_{n\times n}}
\left\|
g_{f_\sharp\beta}^{f_\sharp\alpha}
-
g_{\tilde f^{m_\beta}_\sharp\beta}
 ^{\tilde f^{m_\beta}_\sharp\alpha}
\right\|_{L^2(f_\sharp\beta)}^2=
O\left(
\|m_\alpha-m_\beta\|^4
\right).
\end{equation*}
Thus, \Cref{displacement function assumption} always holds with $p=4$ when $\Sigma\to0_{n\times n}$. Substituting $p=4$ and the vanishing Hellinger terms into \Cref{theorem:main} gives
\begin{equation*}
\lim_{\quad\Sigma\to0_{n\times n}}
\dwm^2(f_\sharp\alpha,f_\sharp\beta)
=
d_M^2\left(f(m_\alpha),f(m_\beta)\right)+
O\left(
\frac{\|m_\alpha-m_\beta\|^2}{\delta}
\right).
\end{equation*}
Multiplying by $\delta^2/(n+2)$, we obtain
\begin{equation}\label{eq:limit_dwm}
\frac{\delta^2}{n+2}
\lim_{\quad\Sigma\to0_{n\times n}}
\dwm^2(f_\sharp\alpha,f_\sharp\beta)
=
\frac{\delta^2}{n+2}
d_M^2\left(f(m_\alpha),f(m_\beta)\right)+
O\left(
\delta\|m_\alpha-m_\beta\|^2
\right).
\end{equation}
Combining \eqref{eq:limit_dwm} and \Cref{singer's result} proves the result.
\end{proof}

\subsection{Proof of \texorpdfstring{\Cref{rmk:cov inverse Eulidean}}{}}

\begin{proof}[Proof of \Cref{rmk:cov inverse Eulidean}]\label{proof:cov inverse Euclidean} 
We separate the proof into three parts:

\begin{enumerate}[label=\emph{(\roman*)}, leftmargin=0pt, itemindent=*, listparindent=\parindent]
    \item \label{part:covariance_euclidean}
    $\|C_{f(m_\beta)}^\dagger\|_2=O(\delta^{-2})$
    \item \label{part:covariance_difference}
    $ \|C_{f(m_\beta)}-M_{N_{\tilde f^{m_\beta}_\sharp\beta}}\|_2=O(\delta^3)$
    \item \label{part:covariance_Peusdoinverse_difference}
    $\|C_{f(m_\beta)}^\dagger-M_{N_{\tilde f^{m_\beta}_\sharp\beta}}^\dagger\|_2=O(\delta^{-1})$
\end{enumerate}

\begin{enumerate}[label=\emph{(\roman*)}, leftmargin=0pt, itemindent=*, listparindent=\parindent]

\item 
   Recall that by definition,
\begin{equation}\label{eq:emp_cov_beta_2}
    C_{f(m_\beta)}
    =\frac{1}{K}\sum_{i=1}^K
    \big(f(m_{\beta_i})-f(m_\beta)\big)\big(f(m_{\beta_i})-f(m_\beta)\big)^\top.
\end{equation}
Let $X\sim \mathrm{Unif} (B(m_\beta,\delta))$ and consider
 \begin{equation*}
     \mathrm{Cov}(f(X))
=\mathbb E\Big[(f(X)-f(m_\beta))(f(X)-f(m_\beta))^\top\Big].
 \end{equation*}
Then \eqref{eq:emp_cov_beta_2} is a finite sample approximation of
$\mathrm{Cov}(f(X))$. Moreover, Taylor expansion at $m_\beta$ gives
$f(X)-f(m_\beta)=J_f(m_\beta)(X-m_\beta)+O(\|X-m_\beta\|^2)$, and since
$\|X-m_\beta\|\le\delta$, we obtain,
\begin{equation*}
    \mathrm{Cov}(f(X))
=J_f(m_\beta)\mathbb E\left[(X-m_\beta)(X-m_\beta)^\top\right]J_f(m_\beta)^\top
+O(\delta^3).
\end{equation*}
By \Cref{uniform distribute covariance},
$\mathrm{Cov}(X) = \mathbb E[(X-m_\beta)(X-m_\beta)^\top]=\frac{\delta^2}{n+2}I$, hence

\begin{align}
\mathrm{Cov}(f(X))
&=\frac{\delta^2}{n+2} J_f(m_\beta)J_f(m_\beta)^\top + O(\delta^3)=\frac{\delta^2}{n+2}\Bigr[J_f(m_\beta)J_f(m_\beta)^\top + O(\delta)\Bigr].\label{eq:cov_scaling}
\end{align}
Taking the pseudo-inverse both sides,
\begin{equation}\label{eq:cov_pesudoinverse_scaling}
\mathrm{Cov}(f(X))^\dagger
=\frac{n+2}{\delta^2}\Bigr[J_f(m_\beta)J_f(m_\beta)^\top + O(\delta)\Bigr]^\dagger.
\end{equation}
Since \eqref{eq:cov_pesudoinverse_scaling} is symmetric positive semi-definite (PSD), its spectral norm is
\begin{equation}\label{eq:cov_pinv_norm_by_eigs}
\big\|\mathrm{Cov}(f(X))^\dagger\big\|_2
=\frac{1}{\lambda_{\min}^+\big(\mathrm{Cov}(f(X))\big)},
\end{equation}
where $\lambda_{\min}^+$ denotes the smallest positive eigenvalue. By \eqref{eq:cov_scaling}
\begin{equation}\label{eq:eig_scaling_cov}
\lambda_{\min}^+\big(\mathrm{Cov}(f(X))\big)
=\frac{\delta^2}{n+2}
\lambda_{\min}^+\Big(J_f(m_\beta)J_f(m_\beta)^\top + O(\delta)\Big).
\end{equation}
Using Weyl's inequality (\Cref{Weyl's inequality}),
\begin{equation*}
\Big|\lambda_{\min}^+\Big(J_f(m_\beta)J_f(m_\beta)^\top + O(\delta)\Big)
-\lambda_{\min}^+\big(J_f(m_\beta)J_f(m_\beta)^\top\big)\Big|
\le C(\delta),
\end{equation*}
where $C(\delta) \in \mathbb{R}$ is $O(\delta)$. For $\delta$ sufficiently small  
\begin{equation*}
   C(\delta) \le \frac{1}{2}\lambda_{\min}^+\big(J_f(m_\beta)J_f(m_\beta)^\top\big),
\end{equation*}
we obtain the lower bound
\begin{equation*}
\lambda_{\min}^+\Big(J_f(m_\beta)J_f(m_\beta)^\top + O(\delta)\Big)
\ge \frac{1}{2}\lambda_{\min}^+\big(J_f(m_\beta)J_f(m_\beta)^\top\big).
\end{equation*}
Plugging this into \eqref{eq:eig_scaling_cov},  \eqref{eq:cov_pinv_norm_by_eigs} yields
\begin{equation*}
\big\|\mathrm{Cov}(f(X))^\dagger\big\|_2
\le \frac{n+2}{\delta^2}\cdot
\frac{2}{\lambda_{\min}^+\big(J_f(m_\beta)J_f(m_\beta)^\top\big)}
=O\left(\frac{1}{\delta^2}\right).
\end{equation*}
By \Cref{Assumption: empirical sampling}, we obtain
\begin{equation*}
    \big\|C_{f(m_\beta)}-\mathrm{Cov}(f(X))\big\|_2
\le \|\mathrm{Cov}(f(X))\|_2,
\end{equation*}
and since 
\begin{equation*}
    \|C_{f(m_\beta)}\|_2 \leq \big\|C_{f(m_\beta)}-\mathrm{Cov}(f(X))\big\|_2 +\|\mathrm{Cov}(f(X))\|_2
\end{equation*}
it follows that
$$\|C_{f(m_\beta)}\|_2=O(\delta^2).$$ 
To estimate $\|C_{f(m_\beta)}^\dagger\|_2$, apply Weyl's inequality (\Cref{Weyl's inequality}) again to get
\begin{equation*}
\big|\lambda_j(C_{f(m_\beta)})-\lambda_j(\mathrm{Cov}(f(X)))\big|
\le \big\|C_{f(m_\beta)}-\mathrm{Cov}(f(X))\big\|_2,
\end{equation*}
which holds for every $j$.
Hence, choosing $j$ such that $\lambda_j$ is the smallest positive eigenvalue $\lambda_{\min}^+$, we get
\begin{equation*}
\lambda_{\min}^+ \big(C_{f(m_\beta)}\big)
\ge \lambda_{\min}^+ \big(\mathrm{Cov}(f(X))\big)
-\big\|C_{f(m_\beta)}-\mathrm{Cov}(f(X))\big\|_2
\ge \frac{1}{2}\lambda_{\min}^+ \big(\mathrm{Cov}(f(X))\big).
\end{equation*}
Consequently,
\begin{equation*}
\big\|C_{f(m_\beta)}^\dagger\big\|_2
=\frac{1}{\lambda_{\min}^+ \big(C_{f(m_\beta)}\big)}
\le \frac{2}{\lambda_{\min}^+ \big(\mathrm{Cov}(f(X))\big)}
=2\big\|\mathrm{Cov}(f(X))^\dagger\big\|_2.
\end{equation*}
This yields
\begin{equation*}
\|C_{f(m_\beta)}^\dagger\|_2 = O \left(\frac 1 {\delta^2}\right).
\end{equation*}

\item 
Define the Taylor remainder for each neighboring mean by
\begin{equation*}
     \Delta_{1,i} = f(m_{\beta_i})-f(m_\beta) - J_f(m_\beta)(m_{\beta_i} - m_\beta).
\end{equation*}
Since $\|m_{\beta_i}-m_\beta\|\le\delta$, we have
$\|\Delta_{1,i}\|=O(\delta^2)$. Substitute $f(m_{\beta_i})-f(m_\beta)=J_f(m_\beta)(m_{\beta_i}-m_\beta)+\Delta_{1,i}$ into \eqref{eq:emp_cov_beta_2} yields

\begin{align*}
  C_{f(m_\beta)}  = \frac{1}{K} \sum_{i=1}^K \left[ J_f(m_\beta)(m_{\beta_i} - m_\beta)\right] \left[ J_f(m_\beta)(m_{\beta_i} - m_\beta) \right]^\top + \Delta_2
   = M_{N_{\tilde f^{m_\beta}_\sharp\beta}} + \Delta_2,
\end{align*}
where
\begin{align*}
  \Delta_2  &=
\frac{1}{K}\sum_{i=1}^K
\left(
[J_f(m_\beta)(m_{\beta_i} - m_\beta)]\Delta_{1,i}^\top
+
\Delta_{1,i}[J_f(m_\beta)(m_{\beta_i} - m_\beta)]^\top
+
\Delta_{1,i}\Delta_{1,i}^\top
\right)\\
&=C_{f(m_\beta)}-M_{N_{\tilde f^{m_\beta}_\sharp\beta}}.
\end{align*}
Since $\|J_f(m_\beta)\|_2$ is bounded, we have
\begin{equation}\label{eq:jacobain delta bound}
    \|J_f(m_\beta)(m_{\beta_i} - m_\beta)\|=O(\delta).
\end{equation}
By Cauchy–Schwarz we get
$$\|\Delta_{1,i}\left[ J_f(m_\beta)(m_{\beta_i} - m_\beta) \right]^\top\|_2\leq\|\Delta_{1,i}\|   \|J_f(m_\beta)(m_{\beta_i}-m_\beta) \|  =   \|\Delta_{1,i}\|   \|J_f(m_\beta)(m_{\beta_i}-m_\beta) \|.$$
This implies \footnote{Here and throughout, $c_1,c_2>0$ denote constants appearing in the asymptotic big-$O$ bounds; that is, if $A=O(r)$ then $|A|\le c r$ for some such constant $c$.}
\begin{equation*}
\|\Delta_{1,i}\|  \|J_f(m_\beta)(m_{\beta_i}-m_\beta) \| 
\le 
c_1 \delta^2 \times c_2 \delta = c_1 c_2 \delta^3.
\end{equation*}
Similarly 
\begin{equation*}
\|\left[ J_f(m_\beta)(m_{\beta_i} - m_\beta) \right]\Delta_{1,i}^\top\|_2   
\le 
 c_1 c_2 \delta^3,
\end{equation*}
and 
\begin{equation*}
    \|\Delta_{1,i}\Delta_{1,i}^\top\|_2\leq c_1^2\delta^4.
\end{equation*}
Consequently,
$\|\Delta_2\|_2 \leq 2c_1c_2\delta^3 + c_1^2\delta^4$, which has leading order $O(\delta^3)$. 
\item
We now compare the pseudo-inverses of $C_{f(m_\beta)}$ and $M_{N_{\tilde f^{m_\beta}_\sharp\beta}}$. Denote by
$
    \Delta_3 = C^\dagger _{f(m_\beta)} - M_{N_{\tilde f^{m_\beta}_\sharp\beta}}^\dagger.
$
Recall from \ref{part:covariance_euclidean} the bound
$
\|C^\dagger _{f(m_\beta)}\|_2 = O(\delta^{-2}).
$
Moreover,
\begin{equation*}
    \biggr\|\left[ \frac{1}{K} \sum_{i=1}^K \left[ J_f(m_\beta)(m_{\beta_i} - m_\beta) \right] \left[ J_f(m_\beta)(m_{\beta_i} - m_\beta) \right]^\top\right]^\dagger \biggr\|_2 = \|M^\dagger_{N_{\tilde f^{m_\beta}_\sharp\beta}}\|_2 = O(\frac{1}{\delta^2}).
\end{equation*}
Apply \Cref{pseudo inverse lemma} to obtain
\begin{equation}\label{eq:Delta3_def_app}
    \|\Delta_3\|_2 \leq \mu \max \biggr(\|C^\dagger _{f(m_\beta)}\|^2_2,  \bigr\|  M^\dagger_{N_{\tilde f^{m_\beta}_\sharp\beta}} \bigr\|^2_2 \biggr)\|\Delta_2\|_2,
\end{equation}
for some constant $\mu \leq 3$. It follows that \eqref{eq:Delta3_def_app} has order $O(\delta^{-1})$. \qedhere
\end{enumerate}

\end{proof}

\subsection{Proof of \texorpdfstring{\Cref{linear approx of f approx euclidean mahalanobis}}{}}
\begin{proof}[Proof of \Cref{linear approx of f approx euclidean mahalanobis}] \label{Proof of:linear approx of f approx euclidean mahalanobis}

\Cref{lemma: WM-inner-covariance} implies
\begin{equation*}
\big\langle g^{\tilde f^{m_\beta}_\sharp\alpha}_{\tilde f^{m_\beta}_\sharp\beta}, 
F^\dagger_{N_{\tilde f^{m_\beta}_\sharp\beta}}
\big(g^{\tilde f^{m_\beta}_\sharp\alpha}_{\tilde f^{m_\beta}_\sharp\beta}\big)
\big\rangle_{\tilde f^{m_\beta}_\sharp\beta} = (J_f(m_\beta)(m_\alpha-m_\beta))^\top M^\dagger_{N_{\tilde f^{m_\beta}_\sharp\beta}}(J_f(m_\beta)(m_\alpha-m_\beta)).
\end{equation*}
Let
$
    \Delta_3 = C^\dagger _{f(m_\beta)} - M_{N_{\tilde f^{m_\beta}_\sharp\beta}}^\dagger,
$
then
\begin{equation}\label{innerproduct1}
        \langle g^{\tilde{f}^{m_\beta}_\sharp\alpha}_{\tilde{f}^{m_\beta}_\sharp\beta}, F^\dagger_{N_{\tilde{f}^{m_\beta}_\sharp\beta}}(g^{\tilde{f}^{m_\beta}_\sharp\alpha}_{\tilde{f}^{m_\beta}_\sharp\beta}) \rangle_{\tilde{f}^{m_\beta}_\sharp\beta} = \left[ J_f(m_\beta)(m_{\alpha} - m_\beta) \right]^\top (C^\dagger _{f(m_\beta)} - \Delta_3)\left[ J_f(m_\beta)(m_{\alpha} - m_\beta) \right].
\end{equation}
Define the Taylor remainder
$
    \Delta_{4} =  f(m_\alpha) - f(m_\beta) - J_f(m_\beta)(m_{\alpha} - m_\beta),
$
where $\|\Delta_{4}\|$ is $O(\|m_{\alpha} - m_\beta\|^2)$. Substituting $J_f(m_\beta)(m_\alpha-m_\beta)=\big(f(m_\alpha)-f(m_\beta)\big)-\Delta_4$ into \eqref{innerproduct1}, we get

\begin{align*}
        \langle g^{\tilde{f}^{m_\beta}_\sharp\alpha}_{\tilde{f}^{m_\beta}_\sharp\beta}, F^\dagger_{N_{\tilde{f}^{m_\beta}_\sharp\beta}}(g^{\tilde{f}^{m_\beta}_\sharp\alpha}_{\tilde{f}^{m_\beta}_\sharp\beta}) \rangle_{\tilde{f}^{m_\beta}_\sharp\beta} &= \left[ [ f(m_\alpha) - f(m_\beta) ] - \Delta_4\right]^\top (C^\dagger _{f(m_\beta)} - \Delta_3)\left[ [  f(m_\alpha) - f(m_\beta)  ] - \Delta_4 \right].
\end{align*}
Denote by
\begin{equation*}
    \Delta_5 = [ f(m_\alpha) - f(m_\beta) ]^\top C^\dagger _{f(m_\beta)}[ f(m_\alpha) - f(m_\beta) ] -  \langle g^{\tilde{f}^{m_\beta}_\sharp\alpha}_{\tilde{f}^{m_\beta}_\sharp\beta}, F^\dagger_{N_{\tilde{f}^{m_\beta}_\sharp\beta}}(g^{\tilde{f}^{m_\beta}_\sharp\alpha}_{\tilde{f}^{m_\beta}_\sharp\beta}) \rangle_{\tilde{f}^{m_\beta}_\sharp\beta},
\end{equation*}
which consists of the cross terms involving $\Delta_4$ and the perturbation
term involving $\Delta_3$, namely,
\begin{align*}
    \Delta_5 &= ([ f(m_\alpha) - f(m_\beta) ]^\top C^\dagger _{f(m_\beta)} \Delta_4 + \Delta_4^\top C^\dagger _{f(m_\beta)} [ f(m_\alpha) - f(m_\beta) ])- \Delta_4^\top C^\dagger _{f(m_\beta)} \Delta_4\\
    &+[ f(m_\alpha) - f(m_\beta) ]^\top \Delta_3 [ f(m_\alpha) - f(m_\beta) ]
        -[ f(m_\alpha) - f(m_\beta) ]^\top \Delta_3 \Delta_4\\
        &- \Delta_4^\top\Delta_3 [ f(m_\alpha) - f(m_\beta) ]
        + \Delta_4^\top \Delta_3 \Delta_4.
\end{align*}
Using \Cref{rmk:cov inverse Eulidean}, $\|C^\dagger _{f(m_\beta)}\|_2 = O(\delta^{-2})$ and
$\|\Delta_3\|_2=O(\delta^{-1})$. We also have $\|\Delta_4\|=O(\|m_\alpha-m_\beta\|^2)$ and
$\|f(m_\alpha)-f(m_\beta)\|=O(\|m_\alpha-m_\beta\|)$. Compute each term of $\Delta_5$ as follows \footnote{Here and throughout, $c_3,\ldots, c_6>0$ denote constants appearing in the asymptotic big-$O$ bounds; that is, if $A=O(r)$ then $|A|\le c r$ for some such constant $c$.}
\begin{align*}
    |[ f(m_\alpha) - f(m_\beta) ]^\top C^\dagger _{f(m_\beta)} \Delta_4| &\leq \|[ f(m_\alpha) - f(m_\beta) ]\|\|C^\dagger _{f(m_\beta)}\|_2 \|\Delta_4\|\\
    &\leq c_3\| m_\alpha-m_\beta \|\times \frac{c_4}{\delta^2} \times c_5\| m_\alpha-m_\beta \|^2\\
    &=c_3\frac{c_4}{\delta^2}c_5\| m_\alpha-m_\beta \|^3.
\end{align*}
Similarly
\begin{equation*}
   |\Delta_4^\top C^\dagger _{f(m_\beta)} [ f(m_\alpha) - f(m_\beta) ])|\leq c_3\frac{c_4}{\delta^2}c_5\| m_\alpha-m_\beta \|^3,
\end{equation*}
\begin{equation*}
   |\Delta_4^\top C^\dagger _{f(m_\beta)} \Delta_4|_2\leq \frac{c_4}{\delta^2}c_5^2\| m_\alpha-m_\beta \|^4,
\end{equation*}
\begin{align*}
| [f(m_\alpha) - f(m_\beta)]^\top \Delta_3 [f(m_\alpha) - f(m_\beta)] |\leq c_3^2\|m_\alpha-m_\beta\|^2 \frac{c_6}{\delta},
\end{align*}

\begin{equation*}
|[ f(m_\alpha) - f(m_\beta) ]^\top \Delta_3 \Delta_4|\leq c_3c_5\| m_\alpha-m_\beta \|^3 \frac{c_6}{\delta},
\end{equation*}

\begin{equation*}
|\Delta_4^\top\Delta_3 [ f(m_\alpha) - f(m_\beta) ]|\leq c_3c_5\| m_\alpha-m_\beta \|^3 \frac{c_6}{\delta},
\end{equation*}

\begin{equation*}
|\Delta_4^\top \Delta_3 \Delta_4|
  \leq c_5^2\| m_\alpha-m_\beta \|^4\frac{c_6}{\delta}.
\end{equation*}
Therefore we obtain the bound
\begin{align*}
    |\Delta_5|\leq   & 2c_3\frac{c_4}{\delta^2}c_5\| m_\alpha-m_\beta \|^3 + \frac{c_4}{\delta^2}c_5^2\| m_\alpha-m_\beta \|^4 + c_3^2 \|m_\alpha-m_\beta\|^2 \frac{c_6}{\delta} \\
   & + 2c_3c_5\|m_\alpha-m_\beta\|^3 \frac{c_6}{\delta}+ c_5^2\| m_\alpha-m_\beta \|^4\frac{c_6}{\delta},
\end{align*}
which has leading order $O(\frac{\|m_{\alpha} - m_\beta\|^2}{\delta})$. We conclude that 
\begin{equation}\label{inner product 1}
     \langle g^{\tilde{f}^{m_\beta}_\sharp\alpha}_{\tilde{f}^{m_\beta}_\sharp\beta}, F^\dagger_{N_{\tilde{f}^{m_\beta}_\sharp\beta}}(g^{\tilde{f}^{m_\beta}_\sharp\alpha}_{\tilde{f}^{m_\beta}_\sharp\beta}) \rangle_{\tilde{f}^{m_\beta}_\sharp\beta} =[ f(m_\alpha) - f(m_\beta) ]^\top C^\dagger _{f(m_\beta)}[ f(m_\alpha) - f(m_\beta) ] + O(\frac{\|m_{\alpha} - m_\beta\|^2}{\delta}).
\end{equation}

Since the Wasserstein Mahalanobis distance uses two inner products (one with base measure $f_\sharp\beta$ and one with $f_\sharp\alpha$), we can simply repeat the same argument by linearizing $f$ at $m_\alpha$. This yields
\begin{equation}\label{inner product 2}
    \langle g_{\tilde{f}^{m_\alpha}_{\sharp} \alpha}^{\tilde{f}^{m_\alpha}_{\sharp} \beta},   F^\dagger_{N_{\tilde{f}^{m_\alpha}_{\sharp} \alpha}} ( g_{\tilde{f}^{m_\alpha}_{\sharp} \alpha}^{\tilde{f}^{m_\alpha}_{\sharp} \beta} ) \rangle_{\tilde{f}^{m_\alpha}_{\sharp} \alpha} = \left[ f(m_\beta) - f(m_\alpha) \right]^\top C_{f(m_\alpha)}^\dagger \left[ f(m_\beta) - f(m_\alpha) \right] +O(\frac{\|m_\beta -m_{\alpha}\|^2}{\delta}).
\end{equation}
Averaging \eqref{inner product 1} and \eqref{inner product 2} gives the desired approximation
 \begin{align*}
          &d_M^2(f(m_\alpha),f(m_\beta)) \\
          =& \frac{1}{2}\left(\langle g^{\tilde{f}^{m_\beta}_\sharp\alpha}_{\tilde{f}^{m_\beta}_\sharp\beta}, F^\dagger_{N_{\tilde{f}^{m_\beta}_\sharp\beta}}(g^{\tilde{f}^{m_\beta}_\sharp\alpha}_{\tilde{f}^{m_\beta}_\sharp\beta}) \rangle_{\tilde{f}^{m_\beta}_\sharp\beta} + \langle g^{\tilde{f}^{m_\alpha}_{\sharp} \beta}_{\tilde{f}^{m_\alpha}_{\sharp} \alpha},   F^\dagger_{N_{\tilde{f}^{m_\alpha}_{\sharp} \alpha}} ( g^{\tilde{f}^{m_\alpha}_{\sharp} \beta}_{\tilde{f}^{m_\alpha}_{\sharp} \alpha} ) \rangle_{\tilde{f}^{m_\alpha}_{\sharp} \alpha}\right) + O(\frac{\|m_{\alpha} - m_\beta\|^2}{\delta}). \qedhere
 \end{align*}
\end{proof}

\subsection{Proof of \texorpdfstring{\Cref{lem:BC_comparison}}{}}\label{Proof:lem_BC_comparison}

\begin{proof}[Proof of \Cref{lem:BC_comparison}]
    
By definition,
\begin{equation*}
B_{kl}
=
\frac1K\sum_{i=1}^K
\Big\langle g^{f_\sharp\beta_i}_{f_\sharp\beta},\ g^{f_\sharp\beta_l}_{f_\sharp\beta}\Big\rangle_{f_\sharp\beta}
\Big\langle g^{f_\sharp\beta_i}_{f_\sharp\beta},\ g^{f_\sharp\beta_k}_{f_\sharp\beta}\Big\rangle_{f_\sharp\beta},
\end{equation*}
and
\begin{equation*}
C_{kl}
=
\frac1K\sum_{i=1}^K
\Big\langle g^{\tilde f^{m_\beta}_\sharp\beta_i}_{\tilde f^{m_\beta}_\sharp\beta},\ g^{\tilde f^{m_\beta}_\sharp\beta_l}_{\tilde f^{m_\beta}_\sharp\beta}\Big\rangle_{\tilde f^{m_\beta}_\sharp\beta}
\Big\langle g^{\tilde f^{m_\beta}_\sharp\beta_i}_{\tilde f^{m_\beta}_\sharp\beta},\ g^{\tilde f^{m_\beta}_\sharp\beta_k}_{\tilde f^{m_\beta}_\sharp\beta}\Big\rangle_{\tilde f^{m_\beta}_\sharp\beta}.
\end{equation*}
The proof is organized into three parts.
\begin{enumerate}[label=\emph{(\roman*)}, leftmargin=0pt, itemindent=*, listparindent=\parindent]
\item \label{part:Cdagger_bound} \emph{Bound $\|C^\dagger\|_2$.} 

Since $\tilde{f}^{m_\beta}$ is affine, we have 
$
    g^{\tilde f^{m_\beta}_\sharp\beta_i}_{\tilde f^{m_\beta}_\sharp\beta}(x) = J_f(m_\beta)(m_{\beta_i} - m_\beta).
$
We can then write
$
  C =
  \frac{1}{K} ( V^\top V)^2,
$
where
\begin{equation*}
    V = [J_f(m_\beta)(m_{\beta_1} - m_\beta), \cdots ,J_f(m_\beta)(m_{\beta_K} - m_\beta)] \in \mathbb{R}^{n \times K}.
\end{equation*}
By \Cref{lemma:bounds for C}, we have
\begin{equation}\label{eq:Cdagger_bound_short}
    \|C^\dagger\|_2
    \leq
    \frac{(n+2)^2}
    {K\delta^4
    \bigl[\sigma_{\min}(J_f(m_\beta))\bigr]^4}
    =
    O(\delta^{-4}).
\end{equation}

\item \label{part:B_Cdagger_bound} \emph{Bound $\|B-C\|_2$.} 

By definition of $B$ and $C$ we obtain
\begin{align*}
(B-C)_{kl}
&=\frac1K\sum_{i=1}^K\Bigg[
\Big\langle g^{f_\sharp\beta_i}_{f_\sharp\beta}, g^{f_\sharp\beta_l}_{f_\sharp\beta}\Big\rangle_{f_\sharp\beta}
\Big\langle g^{f_\sharp\beta_i}_{f_\sharp\beta}, g^{f_\sharp\beta_k}_{f_\sharp\beta}\Big\rangle_{f_\sharp\beta} -
\Big\langle g^{\tilde f^{m_\beta}_\sharp\beta_i}_{\tilde f^{m_\beta}_\sharp\beta}, g^{\tilde f^{m_\beta}_\sharp\beta_l}_{\tilde f^{m_\beta}_\sharp\beta}\Big\rangle_{\tilde f^{m_\beta}_\sharp\beta}
\Big\langle g^{\tilde f^{m_\beta}_\sharp\beta_i}_{\tilde f^{m_\beta}_\sharp\beta}, g^{\tilde f^{m_\beta}_\sharp\beta_k}_{\tilde f^{m_\beta}_\sharp\beta}\Big\rangle_{\tilde f^{m_\beta}_\sharp\beta}
\Bigg].
\end{align*}
Applying \Cref{lemma:bounds for B-C},
\begin{equation}\label{eq:bc_diff_kl}
\begin{aligned}
   (B-C)_{kl}
&\leq\frac1K\sum_{i=1}^K
\Bigg(\Big[
\|J_f(m_\beta)\|_2\delta
\big(
\|\xi_i\|_{L^2(f_\sharp\beta)}
+
\|\xi_l\|_{L^2(f_\sharp\beta)}
\big)
+
\|\xi_i\|_{L^2(f_\sharp\beta)}
\|\xi_l\|_{L^2(f_\sharp\beta)}
\Big] \\
&\times
\Big(
\|J_f(m_\beta)\|_2\delta
+
\|\xi_i\|_{L^2(f_\sharp\beta)}
\Big)
\Big(
\|J_f(m_\beta)\|_2\delta
+
\|\xi_k\|_{L^2(f_\sharp\beta)}
\Big)\\
&
+
\|J_f(m_\beta)\|_2^2\delta^2
\Big[
\|J_f(m_\beta)\|_2\delta
\big(
\|\xi_i\|_{L^2(f_\sharp\beta)}
+
\|\xi_k\|_{L^2(f_\sharp\beta)}
\big)
+
\|\xi_i\|_{L^2(f_\sharp\beta)}
\|\xi_k\|_{L^2(f_\sharp\beta)}
\Big]\Bigg),
\end{aligned}
\end{equation}
where $\xi_i(y) = g^{f_\sharp \beta_i}_{f_\sharp \beta}(y) -g^{\tilde f^{m_\beta}_\sharp\beta_i}_{\tilde f^{m_\beta}_\sharp\beta} (y).$ By Assumption~\ref{displacement function assumption},
\begin{equation*}
    \|\xi_i\|_{L^2(f_\sharp\beta)}
    =
    O(\delta^{p/2}),\quad \text{for all } i,
     \text{ and some } p\geq 2.
\end{equation*}
It follows that 
$
    |(B-C)_{kl}|
    =
    O(\delta^{3+p/2}).
$
Therefore,
\begin{equation}\label{eq:BC_bound_short}
    \|B-C\|_2
    \leq
    \|B-C\|_F
    \leq
    K\max_{k,l}|(B-C)_{kl}|
    =
    O(\delta^{3+p/2}).
\end{equation}

\item \label{part:Bdagger_bound} \emph{ Bound $\|B^\dagger\|_2$.} 

We now use the previous two bounds to find a bound for $\|B^\dagger\|_2$. From
\eqref{eq:Cdagger_bound_short} and \eqref{eq:BC_bound_short}, we have
\begin{equation*}
\|B-C\|_2\|C^\dagger\|_2 
=
O \left(\frac{\delta^{3+p/2}}{\delta^4}\right)
=
O \left(\delta^{p/2-1}\right).
\end{equation*}
If $p>2$ and when $\delta$ sufficiently small, there always exists some $0<\epsilon<1$ such that
\begin{equation}\label{eq:BC_relative_smallness}
\|B-C\|_2\le \epsilon \sigma_{\min}^+(C).
\end{equation}
In the case $p=2$, there exists $c_\xi >0$ such that
\begin{equation*}
    \|\xi_j\|_{L^2(f_\sharp\beta)}
    \leq
    \sqrt{c_\xi} \delta,
    \qquad j=1,\dots,K.
\end{equation*}
Plug this into \eqref{eq:bc_diff_kl} and simplify to obtain
\begin{align*}
(B-C)_{kl}
&\le
\Big(2\|J_f(m_\beta)\|_2\sqrt{c_\xi} \delta^2+c_\xi\delta^2\Big)
\Big(\|J_f(m_\beta)\|_2\delta+\sqrt{c_\xi} \delta\Big)^2
+
\|J_f(m_\beta)\|_2^2\delta^2
\Big(2\|J_f(m_\beta)\|_2\sqrt{c_\xi} \delta^2+c_\xi\delta^2\Big)\\
&=
\Big(
4\|J_f(m_\beta)\|_2^3\sqrt{c_\xi}
+
6\|J_f(m_\beta)\|_2^2c_\xi
+
4\|J_f(m_\beta)\|_2c_\xi^{3/2}
+
c_\xi^2
\Big)\delta^4.
\end{align*}
It follows that,
\begin{align*}
\|B-C\|_2
&\le \|B-C\|_F
\le K \max_{k,l}|(B-C)_{kl}|\\
&\le
K\Big(
4\|J_f(m_\beta)\|_2^3\sqrt{c_\xi}
+
6\|J_f(m_\beta)\|_2^2c_\xi
+
4\|J_f(m_\beta)\|_2c_\xi^{3/2}
+
c_\xi^2
\Big)\delta^4\\
&=
K\sqrt{c_\xi}\Big(
4\|J_f(m_\beta)\|_2^3
+
6\|J_f(m_\beta)\|_2^2\sqrt{c_\xi}
+
4\|J_f(m_\beta)\|_2c_\xi
+
c_\xi^{3/2}
\Big)\delta^4.
\end{align*}
Together with \eqref{eq:Cdagger_bound_short},
\begin{align}\label{eq:p=2_explicite_form}
\|B-C\|_2\|C^\dagger\|_2
\le
\frac{(n+2)^2}{
\bigl[\sigma_{\min}(J_f(m_\beta))\bigr]^4}
\sqrt{c_\xi}
\Big(
4\|J_f(m_\beta)\|_2^3
+
6\|J_f(m_\beta)\|_2^2\sqrt{c_\xi}
+
4\|J_f(m_\beta)\|_2c_\xi
+
c_\xi^{3/2}
\Big).
\end{align}
By \Cref{assuption:dispacement_local},
\begin{equation*}
0<c_\xi<
\min\left\{
1,
\left[
\frac{\|J_f(m_\beta)\|_2^4}
{15(n+2)^2\kappa_\beta^4}
\right]^2,
\left[
\frac{\|J_f(m_\beta)\|_2}
{15(n+2)^2\kappa_\beta^4}
\right]^2
\right\},
\end{equation*}
where $\kappa_\beta=
\frac{\|J_f(m_\beta)\|_2}
{\sigma_{\min}(J_f(m_\beta))}$. By the definition of $\kappa_\beta$
\begin{equation*}
\bigl[\sigma_{\min}(J_f(m_\beta))\bigr]^4
=
\frac{\|J_f(m_\beta)\|_2^4}{\kappa_\beta^4}.
\end{equation*}
Therefore,
\begin{align*}
&
\frac{\bigl[\sigma_{\min}(J_f(m_\beta))\bigr]^4}
{(n+2)^2
\Big(
4\|J_f(m_\beta)\|_2^3
+
6\|J_f(m_\beta)\|_2^2
+
4\|J_f(m_\beta)\|_2
+
1
\Big)}
\\
=&
\frac{\|J_f(m_\beta)\|_2^4}
{(n+2)^2\kappa_\beta^4
\Big(
4\|J_f(m_\beta)\|_2^3
+
6\|J_f(m_\beta)\|_2^2
+
4\|J_f(m_\beta)\|_2
+
1
\Big)}.
\end{align*}
If $\|J_f(m_\beta)\|_2\ge 1$, then
\begin{equation*}
4\|J_f(m_\beta)\|_2^3
+
6\|J_f(m_\beta)\|_2^2
+
4\|J_f(m_\beta)\|_2
+
1
\le
15\|J_f(m_\beta)\|_2^3.
\end{equation*}
Hence
\begin{equation*}
    \frac{\bigl[\sigma_{\min}(J_f(m_\beta))\bigr]^4}
{(n+2)^2
\Big(
4\|J_f(m_\beta)\|_2^3
+
6\|J_f(m_\beta)\|_2^2
+
4\|J_f(m_\beta)\|_2
+
1
\Big)}
\ge
\frac{\|J_f(m_\beta)\|_2}
{15(n+2)^2\kappa_\beta^4}.
\end{equation*}
If $0<\|J_f(m_\beta)\|_2\le 1$, then
\begin{equation*}
    4\|J_f(m_\beta)\|_2^3
+
6\|J_f(m_\beta)\|_2^2
+
4\|J_f(m_\beta)\|_2
+
1
\le
15,
\end{equation*}
and therefore
\begin{equation*}
    \frac{\bigl[\sigma_{\min}(J_f(m_\beta))\bigr]^4}
{(n+2)^2
\Big(
4\|J_f(m_\beta)\|_2^3
+
6\|J_f(m_\beta)\|_2^2
+
4\|J_f(m_\beta)\|_2
+
1
\Big)}
\ge
\frac{\|J_f(m_\beta)\|_2^4}
{15(n+2)^2\kappa_\beta^4}.
\end{equation*}
Combining the two cases, the assumption on $c_\xi$ implies
\begin{equation*}
0<c_\xi<
\min\left\{
1,\
\left[
\frac{\bigl[\sigma_{\min}(J_f(m_\beta))\bigr]^4}
{(n+2)^2
\Big(
4\|J_f(m_\beta)\|_2^3
+
6\|J_f(m_\beta)\|_2^2
+
4\|J_f(m_\beta)\|_2
+
1
\Big)}
\right]^2
\right\}.
\end{equation*}
Using the above bound on $c_\xi$ in \eqref{eq:p=2_explicite_form}, together with $c_\xi<1$, the right-hand side \eqref{eq:p=2_explicite_form} is strictly smaller than 1, then there exists some $0<\epsilon<1$ such that \eqref{eq:BC_relative_smallness} holds. 

Thus, in both cases ($p >2$, $p=2$), we have 
\begin{equation*}
    \|B-C\|_2 \le \epsilon\sigma_{\min}^+(C),\qquad 0<\epsilon<1.
\end{equation*}
Since $B$ and $C$ are both symmetric PSD matrices by construction, their singular values coincide with their eigenvalues. By Weyl's inequality (\Cref{Weyl's inequality}),
\begin{equation*}
    \sigma_r(B)\ge \sigma_r(C)-\|B-C\|_2 \ge (1-\epsilon)\sigma_r(C).
\end{equation*}
We also have $\|B^\dagger\|_2 = 1/\sigma_{\min}^+(B)=1/\sigma_r(B)$ \footnote{Since the perturbation $\xi_i$ is small when $\delta$ is small, we assume that the matrix $B$ has the same rank as the matrix $C$. In \Cref{lemma:bounds for C} $\rank(C) = r \leq n$ was assumed.}. Therefore, using \eqref{eq:Cdagger_bound_short},
\begin{equation}\label{eq:B_dagger_bound}
    \|B^\dagger\|_2 \le \frac{1}{(1-\epsilon)\sigma_{\min}^+(C)}    \leq
   \frac{(n+2)^{2}}
        {(1-\epsilon)K\delta^{4}
         \bigl[\sigma_{\min}\bigl(J_f(m_\beta)\bigr)\bigr]^{4}},
\end{equation}
which has order $O(\delta^{-4})$.    
\end{enumerate}
\end{proof}

\subsection{Proof of \texorpdfstring{\Cref{operator differece lemma}}{}}\label{Proof:operator differece lemma}

\begin{proof}[Proof of \Cref{operator differece lemma}]
The goal is to compare the pseudo-inverse operator built from the nonlinear
displacement functions with the pseudo-inverse operator built from the
linearized displacement functions. In the calculations, three sources of error will appear:
\begin{enumerate}[label=\emph{(\roman*)}, leftmargin=0pt, itemindent=*, listparindent=\parindent]
\item coefficient error
\item displacement error
\item change of measure error
\end{enumerate}

By \Cref{Lemma:operater_defference_explicit},
\begin{align*}
F^\dagger_{N_{f_\sharp \beta}}  - \tilde F^\dagger_{N_{\tilde f^{m_\beta}_\sharp\beta}}
&=
\sum_{k=1}^K \sum_{l=1}^K \Big((B^\dagger)_{kl}-(C^\dagger)_{kl}\Big) 
g^{\tilde f^{m_\beta}_\sharp\beta_k}_{\tilde f^{m_\beta}_\sharp\beta} 
\Big\langle
g^{\tilde f^{m_\beta}_\sharp\beta_l}_{\tilde f^{m_\beta}_\sharp\beta},\cdot
\Big\rangle_{f_\sharp\beta}
\\
&+
\sum_{k=1}^K \sum_{l=1}^K (B^\dagger)_{kl}\Bigg[
g^{\tilde f^{m_\beta}_\sharp\beta_k}_{\tilde f^{m_\beta}_\sharp\beta} 
\big\langle \xi_l,\cdot\big\rangle_{f_\sharp\beta}
+
\xi_k 
\Big\langle
g^{\tilde f^{m_\beta}_\sharp\beta_l}_{\tilde f^{m_\beta}_\sharp\beta},\cdot
\Big\rangle_{f_\sharp\beta}
+
\xi_k \big\langle \xi_l,\cdot\big\rangle_{f_\sharp\beta}
\Bigg]
\\
&+
\sum_{k=1}^K \sum_{l=1}^K (C^\dagger)_{kl}\Bigg[
g^{\tilde f^{m_\beta}_\sharp\beta_k}_{\tilde f^{m_\beta}_\sharp\beta} 
\Big\langle
g^{\tilde f^{m_\beta}_\sharp\beta_l}_{\tilde f^{m_\beta}_\sharp\beta},\cdot
\Big\rangle_{f_\sharp\beta}
-
\Big(U g^{\tilde f^{m_\beta}_\sharp\beta_k}_{\tilde f^{m_\beta}_\sharp\beta}\Big) 
\Big\langle
\Big(U g^{\tilde f^{m_\beta}_\sharp\beta_l}_{\tilde f^{m_\beta}_\sharp\beta}\Big),\cdot
\Big\rangle_{f_\sharp\beta}
\Bigg],
\end{align*}
where $\xi_k=g^{f_\sharp \beta_k}_{f_\sharp \beta} - g^{\tilde f^{m_\beta}_\sharp\beta_k}_{\tilde f^{m_\beta}_\sharp\beta} $. $B$ and $C$ are defined in \Cref{lem:BC_comparison}. We decompose
\begin{equation}\label{eq:Delta_decomposition}
F^\dagger_{N_{f_\sharp \beta}}  - \tilde F^\dagger_{N_{\tilde f^{m_\beta}_\sharp\beta}}
=
\Delta^{(1)}+\Delta^{(2)}+\Delta^{(3)},
\end{equation}
where
\begin{equation*}
\Delta^{(1)}
=
\sum_{k=1}^K\sum_{l=1}^K
\bigl((B^\dagger)_{kl}-(C^\dagger)_{kl}\bigr)
g^{\tilde f^{m_\beta}_\sharp\beta_k}_{\tilde f^{m_\beta}_\sharp\beta}
\left\langle
g^{\tilde f^{m_\beta}_\sharp\beta_l}_{\tilde f^{m_\beta}_\sharp\beta},
\cdot
\right\rangle_{f_\sharp\beta},
\end{equation*}
\begin{equation*}
\begin{aligned}
\Delta^{(2)}
=
\sum_{k=1}^K\sum_{l=1}^K
(B^\dagger)_{kl}
\Big[
g^{\tilde f^{m_\beta}_\sharp\beta_k}_{\tilde f^{m_\beta}_\sharp\beta}
\left\langle
\xi_l,\cdot
\right\rangle_{f_\sharp\beta}+
\xi_k
\left\langle
g^{\tilde f^{m_\beta}_\sharp\beta_l}_{\tilde f^{m_\beta}_\sharp\beta},
\cdot
\right\rangle_{f_\sharp\beta}
+
\xi_k
\left\langle
\xi_l,\cdot
\right\rangle_{f_\sharp\beta}
\Big],
\end{aligned}
\end{equation*}
and
\begin{equation*}
\begin{aligned}
\Delta^{(3)}
=
\sum_{k=1}^K\sum_{l=1}^K
(C^\dagger)_{kl}
\Big[
g^{\tilde f^{m_\beta}_\sharp\beta_k}_{\tilde f^{m_\beta}_\sharp\beta}
\left\langle
g^{\tilde f^{m_\beta}_\sharp\beta_l}_{\tilde f^{m_\beta}_\sharp\beta},
\cdot
\right\rangle_{f_\sharp\beta}
-
\left(
U g^{\tilde f^{m_\beta}_\sharp\beta_k}_{\tilde f^{m_\beta}_\sharp\beta}
\right)
\left\langle
U g^{\tilde f^{m_\beta}_\sharp\beta_l}_{\tilde f^{m_\beta}_\sharp\beta},
\cdot
\right\rangle_{f_\sharp\beta}
\Big].
\end{aligned}
\end{equation*}
Thus $\Delta^{(1)}$ is the coefficient error, $\Delta^{(2)}$ is the displacement error,
and $\Delta^{(3)}$ is the change of measure error.

We estimate these three terms separately.

\begin{enumerate}[label=\emph{(\roman*)}, leftmargin=0pt, itemindent=*, listparindent=\parindent]

\item \label{part:E_bound} \emph{Bound the coefficient error $\Delta^{(1)}$.}

From \Cref{lem:BC_comparison}, $\|B^\dagger\|_2=O(\delta^{-4})$, $\|C^\dagger\|_2=O(\delta^{-4})$ and $\|B-C\|_2=O(\delta^{3+p/2})$. Applying \Cref{pseudo inverse lemma}, we obtain
\begin{equation}\label{eq:E_Bound}
    \|B^\dagger - C^\dagger\|_2=O(\delta^{p/2-5}).
\end{equation} 
Now let $\phi\in L^2(f_\sharp\beta)$ satisfy $\|\phi\|_{L^2(f_\sharp\beta)}=1$. Since 
  \begin{equation*}
g^{\tilde f^{m_\beta}_\sharp\beta_i}_{\tilde f^{m_\beta}_\sharp\beta}
=
J_f(m_\beta)(m_{\beta_i}-m_\beta)
\end{equation*}
we have
\begin{equation}\label{eq:g_phi_bound}
\Big|\Big\langle g^{\tilde f^{m_\beta}_\sharp\beta_i}_{\tilde f^{m_\beta}_\sharp\beta},\phi\Big\rangle_{f_\sharp\beta}\Big|
\le
\Big\|g^{\tilde f^{m_\beta}_\sharp\beta_i}_{\tilde f^{m_\beta}_\sharp\beta}\Big\|_{L^2(f_\sharp\beta)}\|\phi\|_{L^2(f_\sharp\beta)}
\le \|J_f(m_\beta)\|_2\delta,
\qquad
\Big\|g^{\tilde f^{m_\beta}_\sharp\beta_i}_{\tilde f^{m_\beta}_\sharp\beta}\Big\|_{L^2(f_\sharp\beta)}\le \|J_f(m_\beta)\|_2\delta.
\end{equation}
Therefore,
\begin{equation*}
\begin{aligned}
\|\Delta^{(1)}\phi\|_{L^2(f_\sharp\beta)}
&\le
\sum_{k=1}^K \sum_{l=1}^K |(B^\dagger)_{kl}-(C^\dagger)_{kl}| 
\Big\|g^{\tilde f^{m_\beta}_\sharp\beta_k}_{\tilde f^{m_\beta}_\sharp\beta}\Big\|_{L^2(f_\sharp\beta)}
\cdot
\Big|\Big\langle
g^{\tilde f^{m_\beta}_\sharp\beta_l}_{\tilde f^{m_\beta}_\sharp\beta},\phi
\Big\rangle_{f_\sharp\beta}\Big|
\\
&\le
(\|J_f(m_\beta)\|_2\delta)^2 \sum_{k=1}^K \sum_{l=1}^K |(B^\dagger)_{kl}-(C^\dagger)_{kl}|.
\end{aligned}
\end{equation*}
Since $\sum_{k,l}|(B^\dagger)_{kl}-(C^\dagger)_{kl}|\le K\|B^\dagger-C^\dagger\|_F\le K^\frac{3}{2}\|B^\dagger-C^\dagger\|_2$, we get
\begin{equation*}
\|\Delta^{(1)}\phi\|_{L^2(f_\sharp\beta)}
\le
K^\frac{3}{2}(\|J_f(m_\beta)\|_2\delta)^2\|B^\dagger-C^\dagger\|_2.
\end{equation*}
Using \eqref{eq:E_Bound}, this gives
\begin{equation}\label{eq:Delta1_order}
\|\Delta^{(1)}\|_{op} = O(\delta^{p/2-3}).
\end{equation}

\item \label{part:term2_bound} \emph{Bound the displacement error $\Delta^{(2)}$.}

Again, let $\|\phi\|_{L^2(f_\sharp\beta)}=1$. By \Cref{displacement function assumption}, $\|\xi_i\|_{L^2(f_\sharp \beta)}^2 \le  M_\xi, \forall i$, where $M_\xi=O(\delta^p)$ with $p\geq2$. Then
\begin{equation*}
\Big|\langle\xi_i,\phi\rangle_{f_\sharp\beta}\Big|
\le
\|\xi_i\|_{L^2(f_\sharp\beta)}\|\phi\|_{L^2(f_\sharp\beta)}
\le \sqrt{M_\xi}
\end{equation*}
and together with \eqref{eq:g_phi_bound}, we have 
\begin{equation*}
\begin{aligned}
&\Big\|
g^{\tilde f^{m_\beta}_\sharp\beta_k}_{\tilde f^{m_\beta}_\sharp\beta} 
\big\langle \xi_l,\phi\big\rangle_{f_\sharp\beta}
+
\xi_k 
\Big\langle
g^{\tilde f^{m_\beta}_\sharp\beta_l}_{\tilde f^{m_\beta}_\sharp\beta},\phi
\Big\rangle_{f_\sharp\beta}
+
\xi_k \big\langle \xi_l,\phi\big\rangle_{f_\sharp\beta}
\Big\|_{L^2(f_\sharp\beta)}
\\
&\le
\Big\|g^{\tilde f^{m_\beta}_\sharp\beta_k}_{\tilde f^{m_\beta}_\sharp\beta}\Big\|_{L^2(f_\sharp\beta)} 
\Big|\big\langle \xi_l,\phi\big\rangle_{f_\sharp\beta}\Big|
+
\|\xi_k\|_{L^2(f_\sharp\beta)} 
\Big|\Big\langle
g^{\tilde f^{m_\beta}_\sharp\beta_l}_{\tilde f^{m_\beta}_\sharp\beta},\phi
\Big\rangle_{f_\sharp\beta}\Big|
+
\|\xi_k\|_{L^2(f_\sharp\beta)} 
\Big|\big\langle \xi_l,\phi\big\rangle_{f_\sharp\beta}\Big|
\\
&\le
2\|J_f(m_\beta)\|_2\delta\sqrt{M_\xi}+M_\xi.
\end{aligned}
\end{equation*}
It follows that
\begin{equation*}
\begin{aligned}
     \|\Delta^{(2)}\phi\|_{L^2(f_\sharp\beta)}=&\|\sum_{k=1}^K \sum_{l=1}^K (B^\dagger)_{kl}\left[ g^{\tilde f^{m_\beta}_\sharp\beta_k}_{\tilde f^{m_\beta}_\sharp\beta} \left\langle \xi_l, \phi \right\rangle_{f_\sharp \beta} \right. \left. + \xi_k \left\langle g^{\tilde f^{m_\beta}_\sharp\beta_l}_{\tilde f^{m_\beta}_\sharp\beta}, \phi \right\rangle_{f_\sharp \beta} + \xi_k \left\langle \xi_l, \phi \right\rangle_{f_\sharp \beta} \right] \|_{L^2(f_\sharp\beta)}\\
     \leq& (2 \|J_f(m_\beta)\|_2 \delta \sqrt{ M_\xi} + M_\xi)\sum_{k=1}^K \sum_{l=1}^K |(B^\dagger)_{kl}| \\
    \leq& (2 \|J_f(m_\beta)\|_2 \delta \sqrt{ M_\xi} + M_\xi) K\|B^\dagger\|_F \\
    \leq& K^\frac{3}{2}\|B^\dagger\|_2(2 \|J_f(m_\beta)\|_2 \delta \sqrt{ M_\xi} + M_\xi).
\end{aligned}
\end{equation*}
Using 
\begin{equation*}
    \|B^\dagger\|_2=O(\delta^{-4}),
\qquad
M_\xi=O(\delta^p),
\qquad
p\ge2,
\end{equation*}
we get
\begin{equation}\label{eq:Delta2_order}
\|\Delta^{(2)}\|_{op}
=
O(\delta^{p/2-3}).
\end{equation}

\item \label{part:term3_bound}\emph{Bound the change of measure error $\Delta^{(3)}$.} 

By definition,
\begin{align*}
U g^{\tilde f^{m_\beta}_\sharp\beta_k}_{\tilde f^{m_\beta}_\sharp\beta}
=
g^{\tilde f^{m_\beta}_\sharp\beta_k}_{\tilde f^{m_\beta}_\sharp\beta}\sqrt w,
\end{align*}
it follows that
\begin{align*}
\Big\|
g^{\tilde f^{m_\beta}_\sharp\beta_k}_{\tilde f^{m_\beta}_\sharp\beta}
-
U g^{\tilde f^{m_\beta}_\sharp\beta_k}_{\tilde f^{m_\beta}_\sharp\beta}
\Big\|_{L^2(f_\sharp\beta)}
&=
\Big\|
g^{\tilde f^{m_\beta}_\sharp\beta_k}_{\tilde f^{m_\beta}_\sharp\beta}(1-\sqrt w)
\Big\|_{L^2(f_\sharp\beta)}
\le
\Big\|g^{\tilde f^{m_\beta}_\sharp\beta_k}_{\tilde f^{m_\beta}_\sharp\beta}\Big\|_{L^2(f_\sharp\beta)}
\|1-\sqrt w\|_{L^2(f_\sharp\beta)}\\
&\le
\|J_f(m_\beta)\|_2\delta \|1-\sqrt w\|_{L^2(f_\sharp\beta)}.
\end{align*}
Adding and subtracting 
$U g^{\tilde f^{m_\beta}_\sharp\beta_k}_{\tilde f^{m_\beta}_\sharp\beta}
\langle g^{\tilde f^{m_\beta}_\sharp\beta_l}_{\tilde f^{m_\beta}_\sharp\beta},\phi\rangle_{f_\sharp\beta}$, we obtain
\begin{equation*}
\begin{aligned}
&\Big\|
g^{\tilde f^{m_\beta}_\sharp\beta_k}_{\tilde f^{m_\beta}_\sharp\beta}
\Big\langle
g^{\tilde f^{m_\beta}_\sharp\beta_l}_{\tilde f^{m_\beta}_\sharp\beta},\phi
\Big\rangle_{f_\sharp\beta}
-
\Big(U g^{\tilde f^{m_\beta}_\sharp\beta_k}_{\tilde f^{m_\beta}_\sharp\beta}\Big)
\Big\langle
\Big(U g^{\tilde f^{m_\beta}_\sharp\beta_l}_{\tilde f^{m_\beta}_\sharp\beta}\Big),\phi
\Big\rangle_{f_\sharp\beta}
\Big\|_{L^2(f_\sharp\beta)}
\\
&\le
\Big\|
g^{\tilde f^{m_\beta}_\sharp\beta_k}_{\tilde f^{m_\beta}_\sharp\beta}
-
U g^{\tilde f^{m_\beta}_\sharp\beta_k}_{\tilde f^{m_\beta}_\sharp\beta}
\Big\|_{L^2(f_\sharp\beta)}
\cdot
\Big|\Big\langle
g^{\tilde f^{m_\beta}_\sharp\beta_l}_{\tilde f^{m_\beta}_\sharp\beta},\phi
\Big\rangle_{f_\sharp\beta}\Big|+
\Big\|U g^{\tilde f^{m_\beta}_\sharp\beta_k}_{\tilde f^{m_\beta}_\sharp\beta}\Big\|_{L^2(f_\sharp\beta)}
\cdot
\Big|\Big\langle
g^{\tilde f^{m_\beta}_\sharp\beta_l}_{\tilde f^{m_\beta}_\sharp\beta}
-
U g^{\tilde f^{m_\beta}_\sharp\beta_l}_{\tilde f^{m_\beta}_\sharp\beta},\phi
\Big\rangle_{f_\sharp\beta}\Big|
\\
&\le
\|J_f(m_\beta)\|_2\delta \|1-\sqrt w\|_{L^2(f_\sharp\beta)}\cdot (\|J_f(m_\beta)\|_2\delta)
+
(\|J_f(m_\beta)\|_2\delta)\cdot \|J_f(m_\beta)\|_2\delta\|1-\sqrt w\|_{L^2(f_\sharp\beta)}
\\
&\le
2(\|J_f(m_\beta)\|_2\delta)^2 \|1-\sqrt w\|_{L^2(f_\sharp\beta)}.
\end{aligned}
\end{equation*}
Hence
\begin{equation*}
\begin{aligned}
\|\Delta^{(3)}\phi\|_{L^2(f_\sharp\beta)} &= \Bigg\|
\sum_{k=1}^K \sum_{l=1}^K (C^\dagger)_{kl}\Big[
g^{\tilde f^{m_\beta}_\sharp\beta_k}_{\tilde f^{m_\beta}_\sharp\beta}
\Big\langle
g^{\tilde f^{m_\beta}_\sharp\beta_l}_{\tilde f^{m_\beta}_\sharp\beta},\phi
\Big\rangle_{f_\sharp\beta}
-
\Big(U g^{\tilde f^{m_\beta}_\sharp\beta_k}_{\tilde f^{m_\beta}_\sharp\beta}\Big)
\Big\langle
\Big(U g^{\tilde f^{m_\beta}_\sharp\beta_l}_{\tilde f^{m_\beta}_\sharp\beta}\Big),\phi
\Big\rangle_{f_\sharp\beta}
\Big]\Bigg\|_{L^2(f_\sharp\beta)}
\\
&\le
2(\|J_f(m_\beta)\|_2\delta)^2 \|1-\sqrt w\|_{L^2(f_\sharp\beta)}
\sum_{k=1}^K \sum_{l=1}^K |(C^\dagger)_{kl}|\\
&\le
2(\|J_f(m_\beta)\|_2\delta)^2 \|1-\sqrt w\|_{L^2(f_\sharp\beta)} K\|C^\dagger\|_F
\\
&\le
2(\|J_f(m_\beta)\|_2\delta)^2 \|1-\sqrt w\|_{L^2(f_\sharp\beta)} K^\frac{3}{2}\|C^\dagger\|_2.
\end{aligned}
\end{equation*}
By \Cref{lem:BC_comparison}, $\|C^\dagger\|_2=O(\delta^{-4})$. This implies
\begin{equation}\label{eq:Delta3_order}
\|\Delta^{(3)}\|_{op}
=
O(\delta^{-2})
\|1-\sqrt w\|_{L^2(f_\sharp\beta)}.
\end{equation}
Combining \eqref{eq:Delta_decomposition}, \eqref{eq:Delta1_order},
\eqref{eq:Delta2_order}, and \eqref{eq:Delta3_order}, we obtain
\begin{equation*}
\|F^\dagger_{N_{f_\sharp \beta}}  - \tilde F^\dagger_{N_{\tilde f^{m_\beta}_\sharp\beta}}\|_{op}
=
O(\delta^{p/2-3})
+
O(\delta^{-2})
\|1-\sqrt w\|_{L^2(f_\sharp\beta)}.
\end{equation*}
It remains to rewrite the last term using the Hellinger distance. By \Cref{def:hellinger_standard}, with
\begin{equation*}
P=\tilde f^{m_\beta}_\sharp\beta,
\qquad
Q=f_\sharp\beta,
\qquad
\lambda = f_\sharp\beta.
\end{equation*}
Then,
\begin{equation*}
\frac{dQ}{d\lambda}(y)=\frac{d(f_\sharp\beta)}{d(f_\sharp\beta)}(y)=1,
\qquad
\frac{dP}{d\lambda}(y)=\frac{d(\tilde f^{m_\beta}_\sharp\beta)}{d(f_\sharp\beta)}(y)=w(y).
\end{equation*}
Substituting these into \eqref{eq:hellinger_standard_lambda} yields
\begin{equation} \label{eq:Hellinger_distance_transition}
H^2 \Big(\tilde f^{m_\beta}_\sharp\beta,\ f_\sharp\beta\Big)
=
\frac12\int_{\mathbb{R}^n}\bigl(\sqrt{w(y)}-\sqrt{1}\bigr)^2 d(f_\sharp\beta)(y)
=
\frac12\|\sqrt{w}-{1}\bigr\|^2_{L^2(f_\sharp\beta)}.
\end{equation}
It follows that 
\begin{equation*}
    \|F^\dagger_{N_{f_\sharp \beta}}  - \tilde F^\dagger_{N_{\tilde f^{m_\beta}_\sharp\beta}}\|_{op}
=
O(\delta^{p/2-3})
+
O(\delta^{-2} H\left(\tilde f^{m_\beta}_\sharp\beta, f_\sharp\beta\right)).\qedhere
\end{equation*}
\end{enumerate}
\end{proof}

\section{Additional results}\label{sec: Appendix B}

\begin{lemma} \label{uniform distribute covariance}
Let $X$ be a random vector in $\mathbb{R}^n$ uniformly distributed over the ball $B(m,\delta)=\{x\in\mathbb{R}^n:\|x-m\|\le \delta\}\subset \mathbb{R}^n$. Then
\begin{equation*}
\mathrm{Cov}(X)
=\mathbb{E}\big[(X-\mathbb{E}[X])(X-\mathbb{E}[X])^\top\big]
=\mathbb{E}\big[(X-m)(X-m)^\top\big]
=\frac{\delta^2}{n+2} I_n .
\end{equation*}
where $I_n$ is the $n \times n $ identity matrix.

\begin{proof}
By symmetry, $\mathbb E[X]=m$, and direct integration in polar coordinates shows that each diagonal entry of the covariance equals to
\begin{equation*}
    \frac{1}{n}
\frac{\int_0^\delta r^{n+1} dr}
{\int_0^\delta r^{n-1} dr}
=
\frac{\delta^2}{n+2},
\end{equation*}
while the off-diagonal entries vanish. The result then follows immediately. 
\end{proof}

\end{lemma}

\begin{lemma}\label{lemma: WM-inner-covariance}
  Under the same setup as \Cref{linear approx of f approx euclidean mahalanobis}, we have that
    \begin{equation}
    \big\langle g^{\tilde f^{m_\beta}_\sharp\alpha}_{\tilde f^{m_\beta}_\sharp\beta}, 
F^\dagger_{N_{\tilde f^{m_\beta}_\sharp\beta}}
\big(g^{\tilde f^{m_\beta}_\sharp\alpha}_{\tilde f^{m_\beta}_\sharp\beta}\big)
\big\rangle_{\tilde f^{m_\beta}_\sharp\beta} = (J_f(m_\beta)(m_\alpha-m_\beta))^\top M^\dagger_{N_{\tilde f^{m_\beta}_\sharp\beta}}(J_f(m_\beta)(m_\alpha-m_\beta)),
\end{equation}
where $  M_{N_{\tilde f^{m_\beta}_\sharp\beta}} = \frac{1}{K}\sum_{i=1}^K
    J_f(m_\beta)(m_{\beta_i}-m_\beta)\big(J_f(m_\beta)(m_{\beta_i}-m_\beta)\big)^\top$.
\end{lemma}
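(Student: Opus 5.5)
This is a direct computation that mirrors the proof of \Cref{Gaussian Linear Theorem}, applied to the affine map $\tilde f^{m_\beta}$ with $A=J_f(m_\beta)$. The approach rests on three facts, all because $\tilde f^{m_\beta}$ is affine and the Gaussians share a covariance:
\begin{itemize}
\item the pushforwards $\tilde f^{m_\beta}_\sharp\beta_i$, $\tilde f^{m_\beta}_\sharp\beta$ and $\tilde f^{m_\beta}_\sharp\alpha$ are Gaussians with the common covariance $J_f(m_\beta)\Sigma J_f(m_\beta)^\top$;
\item the OT maps between them are translations;
\item every displacement function is therefore constant: $g^{\tilde f^{m_\beta}_\sharp\beta_i}_{\tilde f^{m_\beta}_\sharp\beta}\equiv J_f(m_\beta)(m_{\beta_i}-m_\beta)$ and $g^{\tilde f^{m_\beta}_\sharp\alpha}_{\tilde f^{m_\beta}_\sharp\beta}\equiv J_f(m_\beta)(m_\alpha-m_\beta)$.
\end{itemize}
Here we use that $\tilde f^{m_\beta}(m_{\beta_i})-\tilde f^{m_\beta}(m_\beta)=J_f(m_\beta)(m_{\beta_i}-m_\beta)$.

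First, form $V=[J_f(m_\beta)(m_{\beta_1}-m_\beta),\dots,J_f(m_\beta)(m_{\beta_K}-m_\beta)]$. Since all displacements are constant and the measure is a probability measure, every inner product reduces to a Euclidean dot product, so
\[
F_{N_{\tilde f^{m_\beta}_\sharp\beta}}\phi=\tfrac1K VV^\top\int\phi\,d\tilde f^{m_\beta}_\sharp\beta .
\]
From this, the matrix in \eqref{eq:B_def} becomes $B=\frac1K(V^\top V)^2$.

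Second, \eqref{eq:Fdagger_Nalpha} gives $F^\dagger_{N_{\tilde f^{m_\beta}_\sharp\beta}}\phi=VB^\dagger V^\top\int\phi$. Taking the SVD $V=USW^\top$, the same computation as in the affine proof yields $VB^\dagger V^\top=K\,V((V^\top V)^2)^\dagger V^\top=(\frac1K VV^\top)^\dagger=M^\dagger_{N_{\tilde f^{m_\beta}_\sharp\beta}}$. The point to get right is that $W S^{-4}W^\top$ is taken only on the nonzero singular values, which is exactly the pseudo-inverse convention, so the rank-deficient case is covered.

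Finally, apply this operator to the constant function $J_f(m_\beta)(m_\alpha-m_\beta)$; its integral against the probability measure is the same vector. Pairing the result with the same constant in $L^2(\tilde f^{m_\beta}_\sharp\beta)$ integrates a constant scalar and gives the claimed quadratic form.

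There is no real obstacle: the lemma is a restatement of the affine computation. The only care needed is in the second step's pseudo-inverse identity $V((V^\top V)^2)^\dagger V^\top=(VV^\top)^\dagger$ when $V$ is rank-deficient, which the SVD argument on the nonzero singular values settles.
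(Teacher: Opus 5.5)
Your proposal is correct and follows the paper's route. Because the displacements are constant translations, $F_{N_{\tilde f^{m_\beta}_\sharp\beta}}$ acts as $\phi\mapsto M_{N_{\tilde f^{m_\beta}_\sharp\beta}}\int\phi$, and the quadratic form collapses since $\tilde f^{m_\beta}_\sharp\beta$ is a probability measure. You also spell out the SVD identity $VB^\dagger V^\top=M^\dagger_{N_{\tilde f^{m_\beta}_\sharp\beta}}$ from the proof of \Cref{Gaussian Linear Theorem}, which the paper's proof of this lemma only uses implicitly.
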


\begin{proof}
Under the affine approximation $\tilde f^{m_\beta}$, $\tilde f^{m_\beta}_\sharp\alpha$ and $\tilde f^{m_\beta}_\sharp\beta$
are Gaussians with the same covariance, and hence the OT displacement is a translation,
\begin{equation}\label{eq:linear displacement}
    g^{\tilde f^{m_\beta}_\sharp\alpha}_{\tilde f^{m_\beta}_\sharp\beta}
=J_f(m_\beta)(m_\alpha-m_\beta).
\end{equation}
Similarly, for each neighbor $\beta_i$,
\begin{equation*}
g^{\tilde f^{m_\beta}_\sharp\beta_i}_{\tilde f^{m_\beta}_\sharp\beta}
=J_f(m_\beta)(m_{\beta_i}-m_\beta).
\end{equation*}
Hence the empirical covariance operator acts on any
$\phi\in L^2(\tilde f^{m_\beta}_\sharp\beta)$ as
\begin{equation*}
F_{N_{\tilde f^{m_\beta}_\sharp\beta}}(\phi)
=
M_{N_{\tilde f^{m_\beta}_\sharp\beta}}
\int_{\mathbb R^n}\phi(y) d\tilde f^{m_\beta}_\sharp\beta(y),
\end{equation*}
where
\begin{equation*}
M_{N_{\tilde f^{m_\beta}_\sharp\beta}}
=
\frac1K\sum_{i=1}^K
J_f(m_\beta)(m_{\beta_i}-m_\beta)
\big(J_f(m_\beta)(m_{\beta_i}-m_\beta)\big)^\top .
\end{equation*}    
Therefore, using the fact that $\tilde f^{m_\beta}_\sharp\beta$ is a probability measure, i.e.\ $ \int_{\mathbb{R}^n} 1   d \tilde f^{m_\beta}_\sharp\beta (y) = 1$,

\begin{align*}
&\big\langle g^{\tilde f^{m_\beta}_\sharp\alpha}_{\tilde f^{m_\beta}_\sharp\beta},
F^\dagger_{N_{\tilde f^{m_\beta}_\sharp\beta}}
\big(g^{\tilde f^{m_\beta}_\sharp\alpha}_{\tilde f^{m_\beta}_\sharp\beta}\big)
\big\rangle_{\tilde f^{m_\beta}_\sharp\beta} =
\big(J_f(m_\beta)(m_\alpha-m_\beta)\big)^\top
M^\dagger_{N_{\tilde f^{m_\beta}_\sharp\beta}}
\big(J_f(m_\beta)(m_\alpha-m_\beta)\big). \qedhere
\end{align*}
\end{proof}

\begin{example}\label{example: euclidean example}
Let $m_\alpha,m_\beta\in\mathbb R$ and define $h=m_\beta-m_\alpha$. Consider the nonlinear map $f:\mathbb R\to\mathbb R$
\begin{equation*}
    f(x)=x+x^3.
\end{equation*}
Let $\tilde f$ denote the first-order approximation of $f$ at
$m_\alpha$,
\begin{equation*}
    \tilde f(x)
    = f(m_\alpha)+f'(m_\alpha)(x-m_\alpha),
    \qquad \text{where }
    f'(x)=1+3x^2.
\end{equation*}
We compare the displacement between the mapped points under $f$ with
that under $\tilde f$. The displacement between the two mapped points under $f$ is simply
\begin{equation*}
    f(m_\beta)-f(m_\alpha)
    =
    f(m_\alpha+h)-f(m_\alpha).
\end{equation*}
Expanding gives
\begin{align*}
    f(m_\alpha+h)-f(m_\alpha)
    =
    (m_\alpha+h)+(m_\alpha+h)^3
    -
    (m_\alpha+m_\alpha^3)  
    =
    (1+3m_\alpha^2)h+3m_\alpha h^2+h^3 .
\end{align*}
On the other hand, the displacement between the two mapped points under $\tilde f$ is
\begin{equation*}
\tilde f(m_\beta)-\tilde f(m_\alpha)
    = f'(m_\alpha)(m_\beta-m_\alpha)
    = (1+3m_\alpha^2)h.
\end{equation*}
Subtracting the two displacements gives
\begin{align*}
\bigl(f(m_\alpha+h)-f(m_\alpha)\bigr)-f'(m_\alpha)h
    = 3m_\alpha h^2 + h^3.
\end{align*}
It follows that the squared Euclidean error is
\begin{align*}
(3m_\alpha h^2 + h^3)^2 
    &= 9m_\alpha^2 h^4 + 6m_\alpha h^5 + h^6,
\end{align*}
which has leading order $O(h^4)$ .
\end{example}
\begin{lemma}\label{sigular value multiplication bound}
Let $A\in\mathbb R^{m\times m}$ be full-rank and let $B\in\mathbb R^{m\times n}$. Then for each
$
i=1,\dots,\rank(B),
$
we have
\begin{equation*}
\sigma_i(AB)\ge \sigma_{\min}(A)\sigma_i(B).
\end{equation*}
In particular,
\begin{equation*}
\sigma_{\min}^+(AB)\ge \sigma_{\min}(A)\sigma_{\min}^+(B).
\end{equation*}
\begin{proof}
    For every $x\in \mathbb R^m$ we have
\begin{equation}\label{eq:A_lower_bound}
\|Ax\|\ge \sigma_{\min}(A)\|x\|.
\end{equation}
Using \eqref{eq:A_lower_bound} with $x=By$, we get
\begin{equation*}
\frac{\|ABy\|}{\|y\|}
\ge
\sigma_{\min}(A)\frac{\|By\|}{\|y\|}.
\end{equation*}
Therefore, for every $S\subseteq \mathbb R^n$ with $\dim S=i$,
\begin{equation*}
\min_{\substack{y\in S\\ y\neq 0}}
\frac{\|ABy\|}{\|y\|}
\ge
\sigma_{\min}(A)
\min_{\substack{y\in S\\ y\neq 0}}
\frac{\|By\|}{\|y\|}.
\end{equation*}
Taking the max over all $i$-dimensional subspaces $S$, we obtain
\begin{equation*}
\sigma_i(AB)
\ge
\sigma_{\min}(A)
\max_{\dim S=i}\ \min_{\substack{y\in S\\ y\neq 0}}
\frac{\|By\|}{\|y\|}
=
\sigma_{\min}(A)\sigma_i(B).
\end{equation*}
Thus
\begin{equation*}
\sigma_i(AB)\ge \sigma_{\min}(A)\sigma_i(B),
\qquad i=1,\dots,\rank(B).
\end{equation*}
Taking $i=\rank(B)$ gives
\begin{equation*}
\sigma_{\min}^+(AB)
=
\sigma_{\rank(B)}(AB)
\ge
\sigma_{\min}(A)\sigma_{\rank(B)}(B)
=
\sigma_{\min}(A)\sigma_{\min}^+(B). \qedhere
\end{equation*}
\end{proof}
\end{lemma}

\begin{lemma}[Weyl's inequality, \cite{Tao2010Eigenvalues,Weyl1912Das_asymptotische}]\label{Weyl's inequality}
Let $A,B\in\mathbb{C}^{n\times n}$ be Hermitian matrices. For any Hermitian matrix $M$, denote its eigenvalues in decreasing order by
\begin{equation*}
\lambda_1(M)\ge \lambda_2(M)\ge \cdots \ge \lambda_n(M).
\end{equation*}
Then for every $k\in\{1,\dots,n\}$,

\begin{equation}\label{eq:weyl_opnorm_bound}
\bigl|\lambda_k(A+B)-\lambda_k(A)\bigr|\ \le\ \|B\|_{\mathrm{op}},
\end{equation}
where the operator norm is
\begin{equation*}
\|B\|_{\mathrm{op}} =\max\bigl(|\lambda_1(B)|, |\lambda_n(B)|\bigr).
\end{equation*}
\end{lemma}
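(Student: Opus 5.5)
The statement is Weyl's inequality for Hermitian matrices. The plan is to derive it from the Courant--Fischer min-max characterization of eigenvalues. For a Hermitian $M\in\mathbb{C}^{n\times n}$ with eigenvalues in decreasing order, this characterization reads
\begin{equation*}
\lambda_k(M)=\max_{\dim S=k}\ \min_{\substack{x\in S\\ \|x\|=1}} x^* M x ,
\end{equation*}
which is the same variational principle already used for singular values in \Cref{sigular value multiplication bound}. If needed I will justify it quickly. Diagonalize $M$ unitarily. Any $k$-dimensional $S$ meets the span of the eigenvectors $k,\dots,n$ nontrivially, which gives the upper bound. Taking $S$ to be the span of the first $k$ eigenvectors attains it.

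The key step is a pointwise comparison of Rayleigh quotients. For every unit vector $x$,
\begin{equation*}
x^*(A+B)x = x^*Ax + x^*Bx,
\end{equation*}
and $x^*Bx\in[\lambda_n(B),\lambda_1(B)]$ because $B$ is Hermitian. Hence
\begin{equation*}
x^*Ax-\|B\|_{\mathrm{op}}\le x^*(A+B)x\le x^*Ax+\|B\|_{\mathrm{op}},
\end{equation*}
where $\|B\|_{\mathrm{op}}=\max(|\lambda_1(B)|,|\lambda_n(B)|)$ is exactly the spectral radius of the Hermitian matrix $B$. Next I fix a $k$-dimensional subspace $S$ and take the minimum over unit $x\in S$. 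Because both bounds shift by the same constant, they survive the minimum. Then I take the maximum over all such $S$. By the min-max formula this gives
\begin{equation*}
\lambda_k(A)-\|B\|_{\mathrm{op}}\le\lambda_k(A+B)\le\lambda_k(A)+\|B\|_{\mathrm{op}},
\end{equation*}
which is \eqref{eq:weyl_opnorm_bound}.

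The only point needing care is the justification of Courant--Fischer, mainly the dimension-counting intersection argument. Everything after that is monotonicity of max/min under a uniform additive shift. An alternative that avoids min-max is to apply the same shift argument to $A+B\preceq A+\|B\|_{\mathrm{op}}I$ and $A+B\succeq A-\|B\|_{\mathrm{op}}I$. This uses the monotonicity of $\lambda_k$ under the Loewner order, but proving that monotonicity itself needs min-max. So I will present the min-max route directly.
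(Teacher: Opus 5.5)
Your proof is correct: the Courant--Fischer characterization combined with the bound $|x^*Bx|\le\|B\|_{\mathrm{op}}$ on unit vectors gives the two-sided estimate, and in fact the slightly sharper $\lambda_k(A)+\lambda_n(B)\le\lambda_k(A+B)\le\lambda_k(A)+\lambda_1(B)$. The paper gives no proof of its own and only cites Tao's notes and Weyl; the notes derive the inequality by the same min-max argument you use, so your route is the standard one.
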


\begin{lemma}\label{lemma:bounds for C}
Let
$m_\beta \in \mathbb R^n$, $\delta>0$, and let $m_{\beta_i}$, $i=1,\dots,K$ be uniformly distributed in
$B(m_\beta,\delta)$. Let $f:\mathbb{R}^n\to\mathbb{R}^n$ satisfy \Cref{assump:regularity}, and assume that $K$, $\delta$ satisfy \Cref{Assumption: empirical sampling}. Define
\begin{equation*}
    V
    =
    \big[
    J_f(m_\beta)(m_{\beta_1}-m_\beta),\dots,
    J_f(m_\beta)(m_{\beta_K}-m_\beta)
    \big]\in\mathbb R^{n\times K}.
\end{equation*}
Let
\begin{equation}\label{eq:C_Gram_linear}
    C=
    \frac{1}{K}
    \big(V^\top V\big)^2.
\end{equation}
Then
\begin{equation*}
    \|C^\dagger\|_2
    \leq
    \frac{(n+2)^2}
    {K\delta^4
    \bigl[\sigma_{\min}(J_f(m_\beta))\bigr]^4}.
\end{equation*}
\begin{proof}
    Let $r=\rank(V)=\rank(C)\le n$, and denote by
$\sigma_{\min}^+(V)=\sigma_r(V)$ the smallest positive singular value of $V$. Since the nonzero eigenvalues of $V^\top V$
are $\{\sigma_i(V)^2\}_{i=1}^r$, \eqref{eq:C_Gram_linear} implies
\begin{equation*}
\sigma_{\min}^+(C)
=
\frac1K\big(\sigma_{\min}^+(V)\big)^4,
\end{equation*}
hence
\begin{equation*}
\|C^\dagger\|_2=\frac{1}{\sigma_{\min}^+(C)}
=\frac{K}{\big(\sigma_{\min}^+(V)\big)^4}.
\end{equation*}
Next, write
\begin{equation*}
V=J_f(m_\beta)H, \qquad H=\big[(m_{\beta_1}-m_\beta),\dots,(m_{\beta_K}-m_\beta)\big]\in\R^{n\times K}.
\end{equation*}
By \Cref{sigular value multiplication bound},
\begin{equation*}
    \sigma_{\min}^+(V) = \sigma_{\min}^+(J_f(m_\beta)H) \geq \sigma_{\min}(J_f(m_\beta))\sigma_{\min}^+(H).
\end{equation*}
Under the uniform sampling assumption of $\{m_{\beta_i}\}_{i=1}^K$ and \Cref{Assumption: empirical sampling},
\begin{align*}
    \frac{1}{K}HH^\top = \frac{\delta^2}{n+2} I \quad \text{so that} \quad
    HH^\top =\frac{K\delta^2}{n+2} I.
\end{align*}
It follows that $\{\sigma_i (H)\}_{i=1}^r = \sqrt{\frac{K\delta^2}{n+2}}$, and therefore
\begin{align*}
\sigma^+_{\min}\bigl(V\bigr)
      \geq\sigma_{\min}\bigl(J_f(m_\beta)\bigr)
         \sqrt{\frac{K\delta^{2}}{n+2}}.
\end{align*}
Thus
\begin{equation*}
\|C^\dagger\|_2
   =\frac{K}{\bigl(\sigma_{\min}^+(V)\bigr)^{4}}
   \leq
   \frac{(n+2)^{2}}
        {K\delta^{4}
         \bigl[\sigma_{\min}\bigl(J_f(m_\beta)\bigr)\bigr]^{4}} . \qedhere
\end{equation*}

\end{proof}
\end{lemma}

\begin{lemma} \label{lemma:bounds for B-C}
Let $\beta = \mathcal{N}(m_\beta, \Sigma)$ be a Gaussian distribution on $\mathbb R^n$, let $\delta>0$ and let $\beta_i=\mathcal N(m_{\beta_i},\Sigma), i=1,\dots,K$, where the means $m_{\beta_i}$ are  uniformly distributed in ${B}(m_\beta,\delta)$. Let $f:\mathbb{R}^n\to\mathbb{R}^n$ satisfy \Cref{assump:regularity}, and assume that $K$, $\delta$ satisfy \Cref{Assumption: empirical sampling}. Let $\tilde{f}^{m_\beta}$ be linear approximation of $f$ at $m_\beta$ \eqref{eq:affine-approx}. Denote by
\begin{equation*}
N_{\tilde f^{m_\beta}_\sharp\beta} = \Big\{g^{\tilde f^{m_\beta}_\sharp\beta_i}_{\tilde f^{m_\beta}_\sharp\beta}\Big\}_{i=1}^K,
\qquad
N_{f_\sharp\beta} = \Big\{g^{f_\sharp \beta_i}_{f_\sharp \beta}\Big\}_{i=1}^K,
\end{equation*}
the sets containing OT displacement functions \eqref{eq:displacement_mu_mui} from $\tilde f^{m_\beta}_\sharp\beta$ to $\tilde f^{m_\beta}_\sharp\beta_i$, and from ${f}_\sharp \beta$ to ${f}_\sharp \beta_i$, respectively. Define
\begin{equation}\label{eq: displacement difference}
\xi_i(y) = g^{f_\sharp \beta_i}_{f_\sharp \beta}(y) -g^{\tilde f^{m_\beta}_\sharp\beta_i}_{\tilde f^{m_\beta}_\sharp\beta} (y).
\end{equation}
Then
\begin{align*}
    &\frac1K\sum_{i=1}^K\Bigg[
\Big\langle g^{f_\sharp\beta_i}_{f_\sharp\beta}, g^{f_\sharp\beta_l}_{f_\sharp\beta}\Big\rangle_{f_\sharp\beta}
\Big\langle g^{f_\sharp\beta_i}_{f_\sharp\beta}, g^{f_\sharp\beta_k}_{f_\sharp\beta}\Big\rangle_{f_\sharp\beta} -
\Big\langle g^{\tilde f^{m_\beta}_\sharp\beta_i}_{\tilde f^{m_\beta}_\sharp\beta}, g^{\tilde f^{m_\beta}_\sharp\beta_l}_{\tilde f^{m_\beta}_\sharp\beta}\Big\rangle_{\tilde f^{m_\beta}_\sharp\beta}
\Big\langle g^{\tilde f^{m_\beta}_\sharp\beta_i}_{\tilde f^{m_\beta}_\sharp\beta}, g^{\tilde f^{m_\beta}_\sharp\beta_k}_{\tilde f^{m_\beta}_\sharp\beta}\Big\rangle_{\tilde f^{m_\beta}_\sharp\beta}
\Bigg]\\
&\leq\frac1K\sum_{i=1}^K
\Bigg(\Big[
\|J_f(m_\beta)\|_2\delta
\big(
\|\xi_i\|_{L^2(f_\sharp\beta)}
+
\|\xi_l\|_{L^2(f_\sharp\beta)}
\big)
+
\|\xi_i\|_{L^2(f_\sharp\beta)}
\|\xi_l\|_{L^2(f_\sharp\beta)}
\Big] \\
&\times
\Big(
\|J_f(m_\beta)\|_2\delta
+
\|\xi_i\|_{L^2(f_\sharp\beta)}
\Big)
\Big(
\|J_f(m_\beta)\|_2\delta
+
\|\xi_k\|_{L^2(f_\sharp\beta)}
\Big)\\
&
+
\|J_f(m_\beta)\|_2^2\delta^2
\Big[
\|J_f(m_\beta)\|_2\delta
\big(
\|\xi_i\|_{L^2(f_\sharp\beta)}
+
\|\xi_k\|_{L^2(f_\sharp\beta)}
\big)
+
\|\xi_i\|_{L^2(f_\sharp\beta)}
\|\xi_k\|_{L^2(f_\sharp\beta)}
\Big]\Bigg).
\end{align*}
\begin{proof}
Since $\tilde f^{m_\beta}$ is affine, $g^{\tilde f^{m_\beta}_\sharp\beta_j}_{\tilde f^{m_\beta}_\sharp\beta}(y)
=
J_f(m_\beta)(m_{\beta_j}-m_\beta)$ are constant functions for each $j=1,\dots,K$.
Therefore, the linearized inner products may be evaluated
in $L^2(f_\sharp\beta)$:
\begin{equation*}
    \Big\langle g^{\tilde f^{m_\beta}_\sharp\beta_i}_{\tilde f^{m_\beta}_\sharp\beta},\ g^{\tilde f^{m_\beta}_\sharp\beta_l}_{\tilde f^{m_\beta}_\sharp\beta}\Big\rangle_{\tilde f^{m_\beta}_\sharp\beta} = \Big\langle g^{\tilde f^{m_\beta}_\sharp\beta_i}_{\tilde f^{m_\beta}_\sharp\beta},\ g^{\tilde f^{m_\beta}_\sharp\beta_l}_{\tilde f^{m_\beta}_\sharp\beta}\Big\rangle_{f_\sharp\beta}.
\end{equation*}
Thus,
\begin{align*}
&\frac1K\sum_{i=1}^K\Bigg[
\Big\langle g^{f_\sharp\beta_i}_{f_\sharp\beta}, g^{f_\sharp\beta_l}_{f_\sharp\beta}\Big\rangle_{f_\sharp\beta}
\Big\langle g^{f_\sharp\beta_i}_{f_\sharp\beta}, g^{f_\sharp\beta_k}_{f_\sharp\beta}\Big\rangle_{f_\sharp\beta} -
\Big\langle g^{\tilde f^{m_\beta}_\sharp\beta_i}_{\tilde f^{m_\beta}_\sharp\beta}, g^{\tilde f^{m_\beta}_\sharp\beta_l}_{\tilde f^{m_\beta}_\sharp\beta}\Big\rangle_{\tilde f^{m_\beta}_\sharp\beta}
\Big\langle g^{\tilde f^{m_\beta}_\sharp\beta_i}_{\tilde f^{m_\beta}_\sharp\beta}, g^{\tilde f^{m_\beta}_\sharp\beta_k}_{\tilde f^{m_\beta}_\sharp\beta}\Big\rangle_{\tilde f^{m_\beta}_\sharp\beta}
\Bigg]\\
=&\frac1K\sum_{i=1}^K\Bigg[
\Big\langle g^{f_\sharp\beta_i}_{f_\sharp\beta}, g^{f_\sharp\beta_l}_{f_\sharp\beta}\Big\rangle_{f_\sharp\beta}
\Big\langle g^{f_\sharp\beta_i}_{f_\sharp\beta}, g^{f_\sharp\beta_k}_{f_\sharp\beta}\Big\rangle_{f_\sharp\beta} -
\Big\langle g^{\tilde f^{m_\beta}_\sharp\beta_i}_{\tilde f^{m_\beta}_\sharp\beta}, g^{\tilde f^{m_\beta}_\sharp\beta_l}_{\tilde f^{m_\beta}_\sharp\beta}\Big\rangle_{f_\sharp\beta}
\Big\langle g^{\tilde f^{m_\beta}_\sharp\beta_i}_{\tilde f^{m_\beta}_\sharp\beta}, g^{\tilde f^{m_\beta}_\sharp\beta_k}_{\tilde f^{m_\beta}_\sharp\beta}\Big\rangle_{f_\sharp\beta}
\Bigg].
\end{align*}
Applying the identity $ab-cd=(a-c)b+c(b-d)$, we obtain
\begin{equation}\label{eq:D_expansion}
\begin{aligned}
    &\Big\langle g^{f_\sharp\beta_i}_{f_\sharp\beta},\ g^{f_\sharp\beta_l}_{f_\sharp\beta}\Big\rangle_{f_\sharp\beta}
\Big\langle g^{f_\sharp\beta_i}_{f_\sharp\beta},\ g^{f_\sharp\beta_k}_{f_\sharp\beta}\Big\rangle_{f_\sharp\beta}
-
\Big\langle g^{\tilde f^{m_\beta}_\sharp\beta_i}_{\tilde f^{m_\beta}_\sharp\beta},\ g^{\tilde f^{m_\beta}_\sharp\beta_l}_{\tilde f^{m_\beta}_\sharp\beta}\Big\rangle_{f_\sharp\beta}
\Big\langle g^{\tilde f^{m_\beta}_\sharp\beta_i}_{\tilde f^{m_\beta}_\sharp\beta},\ g^{\tilde f^{m_\beta}_\sharp\beta_k}_{\tilde f^{m_\beta}_\sharp\beta}\Big\rangle_{f_\sharp\beta}\\
=&\Big(
\Big\langle g^{f_\sharp\beta_i}_{f_\sharp\beta},\ g^{f_\sharp\beta_l}_{f_\sharp\beta}\Big\rangle_{f_\sharp\beta}
-
\Big\langle g^{\tilde f^{m_\beta}_\sharp\beta_i}_{\tilde f^{m_\beta}_\sharp\beta},\ g^{\tilde f^{m_\beta}_\sharp\beta_l}_{\tilde f^{m_\beta}_\sharp\beta}\Big\rangle_{f_\sharp\beta}
\Big)
\Big\langle g^{f_\sharp\beta_i}_{f_\sharp\beta},\ g^{f_\sharp\beta_k}_{f_\sharp\beta}\Big\rangle_{f_\sharp\beta}\\
+&\Big\langle g^{\tilde f^{m_\beta}_\sharp\beta_i}_{\tilde f^{m_\beta}_\sharp\beta},\ g^{\tilde f^{m_\beta}_\sharp\beta_l}_{\tilde f^{m_\beta}_\sharp\beta}\Big\rangle_{f_\sharp\beta}
\Big(
\Big\langle g^{f_\sharp\beta_i}_{f_\sharp\beta},\ g^{f_\sharp\beta_k}_{f_\sharp\beta}\Big\rangle_{f_\sharp\beta}
-
\Big\langle g^{\tilde f^{m_\beta}_\sharp\beta_i}_{\tilde f^{m_\beta}_\sharp\beta},\ g^{\tilde f^{m_\beta}_\sharp\beta_k}_{\tilde f^{m_\beta}_\sharp\beta}\Big\rangle_{f_\sharp\beta}
\Big).
\end{aligned}
\end{equation}
Using \eqref{eq: displacement difference}, we have
\begin{align*}
\Big\langle g^{f_\sharp\beta_i}_{f_\sharp\beta},\ g^{f_\sharp\beta_l}_{f_\sharp\beta}\Big\rangle_{f_\sharp\beta}
-
\Big\langle g^{\tilde f^{m_\beta}_\sharp\beta_i}_{\tilde f^{m_\beta}_\sharp\beta},\ g^{\tilde f^{m_\beta}_\sharp\beta_l}_{\tilde f^{m_\beta}_\sharp\beta}\Big\rangle_{f_\sharp\beta}
&=
\big\langle \xi_i,\ g^{\tilde f^{m_\beta}_\sharp\beta_l}_{\tilde f^{m_\beta}_\sharp\beta}\big\rangle_{f_\sharp\beta}
+\big\langle g^{\tilde f^{m_\beta}_\sharp\beta_i}_{\tilde f^{m_\beta}_\sharp\beta},\ \xi_l\big\rangle_{f_\sharp\beta}
+\langle \xi_i,\xi_l\rangle_{f_\sharp\beta}.
\end{align*}
By Cauchy--Schwarz,
\begin{align*}
&\Big|
\Big\langle g^{f_\sharp\beta_i}_{f_\sharp\beta},\ g^{f_\sharp\beta_l}_{f_\sharp\beta}\Big\rangle_{f_\sharp\beta}
-
\Big\langle g^{\tilde f^{m_\beta}_\sharp\beta_i}_{\tilde f^{m_\beta}_\sharp\beta},\ g^{\tilde f^{m_\beta}_\sharp\beta_l}_{\tilde f^{m_\beta}_\sharp\beta}\Big\rangle_{f_\sharp\beta}
\Big|\\
&\le
\|\xi_i\|_{L^2(f_\sharp\beta)} \Big\|g^{\tilde f^{m_\beta}_\sharp\beta_l}_{\tilde f^{m_\beta}_\sharp\beta}\Big\|_{L^2(f_\sharp\beta)}
+
\Big\|g^{\tilde f^{m_\beta}_\sharp\beta_i}_{\tilde f^{m_\beta}_\sharp\beta}\Big\|_{L^2(f_\sharp\beta)} \|\xi_l\|_{L^2(f_\sharp\beta)}
+\|\xi_i\|_{L^2(f_\sharp\beta)} \|\xi_l\|_{L^2(f_\sharp\beta)}.
\end{align*}
We have
\begin{equation*}
\Big\|g^{\tilde f^{m_\beta}_\sharp\beta_j}_{\tilde f^{m_\beta}_\sharp\beta}\Big\|_{L^2(f_\sharp\beta)}
\le
\|J_f(m_\beta)\|_2 \delta.
\end{equation*}
Moreover,
\begin{align*}
\Big\|g^{f_\sharp\beta_i}_{f_\sharp\beta}\Big\|_{L^2(f_\sharp\beta)}
&\le
\Big\|g^{\tilde f^{m_\beta}_\sharp\beta_i}_{\tilde f^{m_\beta}_\sharp\beta}\Big\|_{L^2(f_\sharp\beta)}
+
\|\xi_i\|_{L^2(f_\sharp\beta)} \\
&\le
\|J_f(m_\beta)\|_2 \delta + \|\xi_i\|_{L^2(f_\sharp\beta)}.
\end{align*}
Hence
\begin{equation*}
\Big|\Big\langle g^{f_\sharp\beta_i}_{f_\sharp\beta},\ g^{f_\sharp\beta_k}_{f_\sharp\beta}\Big\rangle_{f_\sharp\beta}\Big|
\leq
\Big(
\|J_f(m_\beta)\|_2\delta
+
\|\xi_i\|_{L^2(f_\sharp\beta)}
\Big)
\Big(
\|J_f(m_\beta)\|_2\delta
+
\|\xi_k\|_{L^2(f_\sharp\beta)}
\Big).
\end{equation*}
Similarly,
\begin{equation*}
\Big|
\Big\langle g^{\tilde f^{m_\beta}_\sharp\beta_i}_{\tilde f^{m_\beta}_\sharp\beta},\ g^{\tilde f^{m_\beta}_\sharp\beta_l}_{\tilde f^{m_\beta}_\sharp\beta}\Big\rangle_{f_\sharp\beta}
\Big|
\le
\|J_f(m_\beta)\|_2^2 \delta^2.
\end{equation*}
Combining these bounds into \eqref{eq:D_expansion},
\begin{align*}
&\Big| \Big\langle g^{f_\sharp\beta_i}_{f_\sharp\beta},\ g^{f_\sharp\beta_l}_{f_\sharp\beta}\Big\rangle_{f_\sharp\beta}
\Big\langle g^{f_\sharp\beta_i}_{f_\sharp\beta},\ g^{f_\sharp\beta_k}_{f_\sharp\beta}\Big\rangle_{f_\sharp\beta}
-
\Big\langle g^{\tilde f^{m_\beta}_\sharp\beta_i}_{\tilde f^{m_\beta}_\sharp\beta},\ g^{\tilde f^{m_\beta}_\sharp\beta_l}_{\tilde f^{m_\beta}_\sharp\beta}\Big\rangle_{f_\sharp\beta}
\Big\langle g^{\tilde f^{m_\beta}_\sharp\beta_i}_{\tilde f^{m_\beta}_\sharp\beta},\ g^{\tilde f^{m_\beta}_\sharp\beta_k}_{\tilde f^{m_\beta}_\sharp\beta}\Big\rangle _{f_\sharp\beta}\Big| \\
&\leq
\Big[
\|J_f(m_\beta)\|_2\delta
\big(
\|\xi_i\|_{L^2(f_\sharp\beta)}
+
\|\xi_l\|_{L^2(f_\sharp\beta)}
\big)
+
\|\xi_i\|_{L^2(f_\sharp\beta)}
\|\xi_l\|_{L^2(f_\sharp\beta)}
\Big] \\
&\times
\Big(
\|J_f(m_\beta)\|_2\delta
+
\|\xi_i\|_{L^2(f_\sharp\beta)}
\Big)
\Big(
\|J_f(m_\beta)\|_2\delta
+
\|\xi_k\|_{L^2(f_\sharp\beta)}
\Big)\\
&
+
\|J_f(m_\beta)\|_2^2\delta^2
\Big[
\|J_f(m_\beta)\|_2\delta
\big(
\|\xi_i\|_{L^2(f_\sharp\beta)}
+
\|\xi_k\|_{L^2(f_\sharp\beta)}
\big)
+
\|\xi_i\|_{L^2(f_\sharp\beta)}
\|\xi_k\|_{L^2(f_\sharp\beta)}
\Big].
\end{align*}
Taking the average over $i=1,\dots,K$ gives the desired result.
\end{proof}
\end{lemma}

\begin{lemma}\label{Lemma:operater_defference_explicit}
Let $\beta = \mathcal{N}(m_\beta, \Sigma)$ be a Gaussian distribution on $\mathbb R^n$, let $\delta>0$, and let $\beta_i=\mathcal N(m_{\beta_i},\Sigma), i=1,\dots,K$, where the means $m_{\beta_i}$ are uniformly distributed in ${B}(m_\beta,\delta)$. Let $f:\mathbb{R}^n\to\mathbb{R}^n$ satisfy \Cref{assump:regularity}, and assume that $K$, $\delta$ satisfy \Cref{Assumption: empirical sampling}. Let $\tilde{f}^{m_\beta}$ be the linear approximation of $f$ at $m_\beta$ \eqref{eq:affine-approx}. Denote by
\begin{equation*}
N_{\tilde f^{m_\beta}_\sharp\beta} = \Big\{g^{\tilde f^{m_\beta}_\sharp\beta_i}_{\tilde f^{m_\beta}_\sharp\beta}\Big\}_{i=1}^K,
\qquad
N_{f_\sharp\beta} = \Big\{g^{f_\sharp \beta_i}_{f_\sharp \beta}\Big\}_{i=1}^K,
\end{equation*}
the sets containing OT displacement functions \eqref{eq:displacement_mu_mui} from $\tilde f^{m_\beta}_\sharp\beta$ to $\tilde f^{m_\beta}_\sharp\beta_i$, and from ${f}_\sharp \beta$ to ${f}_\sharp \beta_i$, respectively. Let $F^\dagger_{N_{f_\sharp\beta}}$, $F^\dagger_{N_{\tilde f^{m_\beta}_\sharp\beta}}$
be the pseudo-inverse empirical covariance operators \eqref{eq:Fdagger_Nalpha} defined by $N_{f_\sharp\beta}$ and $N_{\tilde f^{m_\beta}_\sharp\beta}$, respectively. Let
$U$
be the operator defined in \eqref{eq:U_def} and let
\begin{equation*}
\tilde F^\dagger_{N_{\tilde f^{m_\beta}_\sharp\beta}}
=
UF^\dagger_{N_{\tilde f^{m_\beta}_\sharp\beta}}U^{-1}.
\end{equation*}
Then 
\begin{equation}\label{eq:operator_diff_explicit}
\begin{aligned}
F^\dagger_{N_{f_\sharp \beta}} -\tilde F^\dagger_{N_{\tilde f^{m_\beta}_\sharp\beta}}  
 &=
\sum_{k=1}^K \sum_{l=1}^K \Big((B^\dagger)_{kl}-(C^\dagger)_{kl}\Big) 
g^{\tilde f^{m_\beta}_\sharp\beta_k}_{\tilde f^{m_\beta}_\sharp\beta} 
\Big\langle
g^{\tilde f^{m_\beta}_\sharp\beta_l}_{\tilde f^{m_\beta}_\sharp\beta},\cdot
\Big\rangle_{f_\sharp\beta}
\\
&+
\sum_{k=1}^K \sum_{l=1}^K (B^\dagger)_{kl}\Bigg[
g^{\tilde f^{m_\beta}_\sharp\beta_k}_{\tilde f^{m_\beta}_\sharp\beta} 
\big\langle \xi_l,\cdot\big\rangle_{f_\sharp\beta}
+
\xi_k 
\Big\langle
g^{\tilde f^{m_\beta}_\sharp\beta_l}_{\tilde f^{m_\beta}_\sharp\beta},\cdot
\Big\rangle_{f_\sharp\beta}
+
\xi_k \big\langle \xi_l,\cdot\big\rangle_{f_\sharp\beta}
\Bigg]
\\
&+
\sum_{k=1}^K \sum_{l=1}^K (C^\dagger)_{kl}\Bigg[
g^{\tilde f^{m_\beta}_\sharp\beta_k}_{\tilde f^{m_\beta}_\sharp\beta} 
\Big\langle
g^{\tilde f^{m_\beta}_\sharp\beta_l}_{\tilde f^{m_\beta}_\sharp\beta},\cdot
\Big\rangle_{f_\sharp\beta}
-
\Big(U g^{\tilde f^{m_\beta}_\sharp\beta_k}_{\tilde f^{m_\beta}_\sharp\beta}\Big) 
\Big\langle
\Big(U g^{\tilde f^{m_\beta}_\sharp\beta_l}_{\tilde f^{m_\beta}_\sharp\beta}\Big),\cdot
\Big\rangle_{f_\sharp\beta}
\Bigg],
\end{aligned}
\end{equation}
where $\xi_k=g^{f_\sharp \beta_k}_{f_\sharp \beta} - g^{\tilde f^{m_\beta}_\sharp\beta_k}_{\tilde f^{m_\beta}_\sharp\beta} $ and $B$ and $C$ are defined in \Cref{lem:BC_comparison}.
\begin{proof}
Recall that by definition
\begin{align}\label{eq:Fdagger_nonlinear_expansion_start}
F^\dagger_{N_{f_\sharp \beta}}&= \sum_{k=1}^K \sum_{l=1}^K (B^\dagger)_{kl} \left( g^{f_\sharp \beta_k}_{f_\sharp \beta} \right) \left\langle g^{f_\sharp \beta_l}_{f_\sharp \beta}, \cdot \right\rangle_{f_\sharp \beta}.
\end{align}
Using $\xi_k=g^{f_\sharp \beta_k}_{f_\sharp \beta} - g^{\tilde f^{m_\beta}_\sharp\beta_k}_{\tilde f^{m_\beta}_\sharp\beta} $,
we expand \eqref{eq:Fdagger_nonlinear_expansion_start} to obtain
\begin{equation*}
\begin{aligned}
F^\dagger_{N_{f_\sharp \beta}} = \sum_{k=1}^K \sum_{l=1}^K (B^\dagger)_{kl}&\left[ g^{\tilde f^{m_\beta}_\sharp\beta_k}_{\tilde f^{m_\beta}_\sharp\beta} \left\langle g^{\tilde f^{m_\beta}_\sharp\beta_l}_{\tilde f^{m_\beta}_\sharp\beta}, \cdot \right\rangle_{f_\sharp \beta} + g^{\tilde f^{m_\beta}_\sharp\beta_k}_{\tilde f^{m_\beta}_\sharp\beta} \left\langle \xi_l, \cdot \right\rangle_{f_\sharp \beta} \right. \left. + \xi_k \left\langle g^{\tilde f^{m_\beta}_\sharp\beta_l}_{\tilde f^{m_\beta}_\sharp\beta}, \cdot \right\rangle_{f_\sharp \beta} + \xi_k\left\langle \xi_l, \cdot \right\rangle_{f_\sharp \beta} \right].
\end{aligned}
\end{equation*}
Also, by definition
\begin{align}\label{eq:linear peuso inverse operator}
F^\dagger_{N_{\tilde f^{m_\beta}_\sharp\beta}} &= \sum_{k=1}^K \sum_{l=1}^K (C^\dagger)_{kl}  \ g^{\tilde f^{m_\beta}_\sharp\beta_k}_{\tilde f^{m_\beta}_\sharp\beta}  \  \left\langle g^{\tilde f^{m_\beta}_\sharp\beta_l}_{\tilde f^{m_\beta}_\sharp\beta}, \cdot \right\rangle_{\tilde f^{m_\beta}_\sharp\beta}.
\end{align}
Since this operator acts on $L^2(\tilde f^{m_\beta}_\sharp\beta)$, we conjugate it to
$L^2(f_\sharp\beta)$ by the operator $U$ defined in \eqref{eq:U_def}:
\begin{align*}
\tilde F^\dagger_{N_{\tilde f^{m_\beta}_\sharp\beta}}
&=UF^\dagger_{N_{\tilde f^{m_\beta}_\sharp\beta}} U^{-1} =
\sum_{k=1}^K \sum_{l=1}^K (C^\dagger)_{kl} 
\Big(U g^{\tilde f^{m_\beta}_\sharp\beta_k}_{\tilde f^{m_\beta}_\sharp\beta}\Big) 
\left\langle \Big(U g^{\tilde f^{m_\beta}_\sharp\beta_l}_{\tilde f^{m_\beta}_\sharp\beta}\Big), \cdot \right\rangle_{f_\sharp \beta}.
\end{align*}
Now we can decompose $F^\dagger_{N_{f_\sharp \beta}}  - \tilde F^\dagger_{N_{\tilde f^{m_\beta}_\sharp\beta}}$ as
\begin{align*}
F^\dagger_{N_{f_\sharp \beta}}  - \tilde F^\dagger_{N_{\tilde f^{m_\beta}_\sharp\beta}} &= \sum_{k=1}^K \sum_{l=1}^K (B^\dagger)_{kl}\left[ g^{\tilde f^{m_\beta}_\sharp\beta_k}_{\tilde f^{m_\beta}_\sharp\beta} \left\langle g^{\tilde f^{m_\beta}_\sharp\beta_l}_{\tilde f^{m_\beta}_\sharp\beta}, \cdot \right\rangle_{f_\sharp \beta} + g^{\tilde f^{m_\beta}_\sharp\beta_k}_{\tilde f^{m_\beta}_\sharp\beta} \left\langle \xi_l, \cdot \right\rangle_{f_\sharp \beta} \right. \left. + \xi_k \left\langle g^{\tilde f^{m_\beta}_\sharp\beta_l}_{\tilde f^{m_\beta}_\sharp\beta}, \cdot \right\rangle_{f_\sharp \beta} + \xi_k\left\langle \xi_l, \cdot \right\rangle_{f_\sharp \beta} \right]\\
& - \sum_{k=1}^K \sum_{l=1}^K (C^\dagger)_{kl}\left[ Ug^{\tilde f^{m_\beta}_\sharp\beta_k}_{\tilde f^{m_\beta}_\sharp\beta} \left\langle Ug^{\tilde f^{m_\beta}_\sharp\beta_l}_{\tilde f^{m_\beta}_\sharp\beta}, \cdot \right\rangle_{f_\sharp \beta}  \right]. 
\end{align*}
Adding and subtracting an intermediate term \eqref{eq:intermediate term}, we obtain
\begin{align}
F^\dagger_{N_{f_\sharp \beta}}  - \tilde F^\dagger_{N_{\tilde f^{m_\beta}_\sharp\beta}} &= \sum_{k=1}^K \sum_{l=1}^K (B^\dagger)_{kl}\left[ g^{\tilde f^{m_\beta}_\sharp\beta_k}_{\tilde f^{m_\beta}_\sharp\beta} \left\langle g^{\tilde f^{m_\beta}_\sharp\beta_l}_{\tilde f^{m_\beta}_\sharp\beta}, \cdot \right\rangle_{f_\sharp \beta} + g^{\tilde f^{m_\beta}_\sharp\beta_k}_{\tilde f^{m_\beta}_\sharp\beta} \left\langle \xi_l, \cdot \right\rangle_{f_\sharp \beta} \right. \left. + \xi_k \left\langle g^{\tilde f^{m_\beta}_\sharp\beta_l}_{\tilde f^{m_\beta}_\sharp\beta}, \cdot \right\rangle_{f_\sharp \beta} + \xi_k\left\langle \xi_l, \cdot \right\rangle_{f_\sharp \beta} \right] \notag\\
 &+\left[- \sum_{k=1}^K \sum_{l=1}^K (C^\dagger)_{kl} 
g^{\tilde f^{m_\beta}_\sharp\beta_k}_{\tilde f^{m_\beta}_\sharp\beta} 
\Big\langle
g^{\tilde f^{m_\beta}_\sharp\beta_l}_{\tilde f^{m_\beta}_\sharp\beta},\cdot
\Big\rangle_{f_\sharp\beta}+
\sum_{k=1}^K \sum_{l=1}^K (C^\dagger)_{kl} 
g^{\tilde f^{m_\beta}_\sharp\beta_k}_{\tilde f^{m_\beta}_\sharp\beta} 
\Big\langle
g^{\tilde f^{m_\beta}_\sharp\beta_l}_{\tilde f^{m_\beta}_\sharp\beta},\cdot
\Big\rangle_{f_\sharp\beta}\right]\label{eq:intermediate term}\\
& - \sum_{k=1}^K \sum_{l=1}^K (C^\dagger)_{kl}\left[ Ug^{\tilde f^{m_\beta}_\sharp\beta_k}_{\tilde f^{m_\beta}_\sharp\beta} \left\langle Ug^{\tilde f^{m_\beta}_\sharp\beta_l}_{\tilde f^{m_\beta}_\sharp\beta}, \cdot \right\rangle_{f_\sharp \beta}  \right] \notag. 
\end{align}
Reorganizing, we obtain
\begin{align*}
F^\dagger_{N_{f_\sharp \beta}}  - \tilde F^\dagger_{N_{\tilde f^{m_\beta}_\sharp\beta}}
&=
\sum_{k=1}^K \sum_{l=1}^K \Big((B^\dagger)_{kl}-(C^\dagger)_{kl}\Big) 
g^{\tilde f^{m_\beta}_\sharp\beta_k}_{\tilde f^{m_\beta}_\sharp\beta} 
\Big\langle
g^{\tilde f^{m_\beta}_\sharp\beta_l}_{\tilde f^{m_\beta}_\sharp\beta},\cdot
\Big\rangle_{f_\sharp\beta}
\\
&+
\sum_{k=1}^K \sum_{l=1}^K (B^\dagger)_{kl}\Bigg[
g^{\tilde f^{m_\beta}_\sharp\beta_k}_{\tilde f^{m_\beta}_\sharp\beta} 
\big\langle \xi_l,\cdot\big\rangle_{f_\sharp\beta}
+
\xi_k 
\Big\langle
g^{\tilde f^{m_\beta}_\sharp\beta_l}_{\tilde f^{m_\beta}_\sharp\beta},\cdot
\Big\rangle_{f_\sharp\beta}
+
\xi_k \big\langle \xi_l,\cdot\big\rangle_{f_\sharp\beta}
\Bigg]
\\
&+
\sum_{k=1}^K \sum_{l=1}^K (C^\dagger)_{kl}\Bigg[
g^{\tilde f^{m_\beta}_\sharp\beta_k}_{\tilde f^{m_\beta}_\sharp\beta} 
\Big\langle
g^{\tilde f^{m_\beta}_\sharp\beta_l}_{\tilde f^{m_\beta}_\sharp\beta},\cdot
\Big\rangle_{f_\sharp\beta}
-
\Big(U g^{\tilde f^{m_\beta}_\sharp\beta_k}_{\tilde f^{m_\beta}_\sharp\beta}\Big) 
\Big\langle
\Big(U g^{\tilde f^{m_\beta}_\sharp\beta_l}_{\tilde f^{m_\beta}_\sharp\beta}\Big),\cdot
\Big\rangle_{f_\sharp\beta}
\Bigg]. \qedhere
\end{align*}
\end{proof}
\end{lemma}

\begin{definition}[Hellinger distance, Definition~6.35 of \cite{Stuart2010inverse}]\label{def:hellinger_standard} 
Let $P$ and $Q$ be probability measures on $\mathbb{R}^n$.
Assume there exists a probability measure $\lambda$ on $\mathbb{R}^n$ such that $P$ and $Q$ are absolutely continuous with respect to $\lambda$.
Write
\begin{equation*}
dP = p d\lambda,
\qquad
dQ = q d\lambda,
\end{equation*}
where $p=\frac{dP}{d\lambda}$ and $q=\frac{dQ}{d\lambda}$ are Radon-Nikodym derivatives.
The Hellinger distance between $P$ and $Q$ is defined by
\begin{equation}\label{eq:hellinger_standard_lambda}
H^2(P,Q)
=
\frac12\int_{\mathbb{R}^n}\bigl(\sqrt{p(y)}-\sqrt{q(y)}\bigr)^2 d\lambda(y).
\end{equation}
This definition does not depend on the choice of $\lambda$ as long as Radon--Nikodym derivatives are well defined.
\end{definition}

\begin{lemma}\label{lemma:Radon-Nikodym well define}
Let $\beta=\mathcal{N}(m_\beta,\Sigma)$ on $\mathbb{R}^n$, where $\beta$ is non-degenerated $(\Sigma \text{ positive definite)}$.
Let $f:\mathbb{R}^n\to\mathbb{R}^n$ be a smooth ($C^\infty$) diffeomorphism, denote by $\tilde f^{m_\beta}$ its affine approximation at $m_{\beta}$ \eqref{eq:affine-approx}. Then $\tilde f^{m_\beta}_\sharp\beta$ is absolutely continuous with respect to $f_\sharp\beta$. Moreover, the Radon-Nikodym derivative
\begin{equation*}
    w=\frac{d(\tilde f^{m_\beta}_\sharp\beta)}{d(f_\sharp\beta)}
\end{equation*}
is strictly positive.

\begin{proof}
Since $\beta$ is non-degenerate, it has a density $\rho_\beta (x)$, which is strictly positive $\forall x\in\mathbb{R}^n$.
Because $f$ is a $C^\infty$ diffeomorphism, $f^{-1}$ is $C^\infty$ as well and we have
\begin{equation*}
    \bigl|\det J_{f^{-1}}(y)\bigr|>0
\qquad \forall y\in\mathbb R^n.
\end{equation*}
For any Borel set $A\subset\mathbb{R}^n$, by change of variables $y=f(x)$, $dx=\bigl|\det J_{f^{-1}}(y)\bigr| dy$,
\begin{align*}
(f_\sharp\beta)(A)
&=\beta\bigl(f^{-1}(A)\bigr) \\
&=\int_{f^{-1}(A)} \rho_\beta(x)dx\\
&=
\int_A \rho_\beta(f^{-1}(y))
\bigl|\det J_{f^{-1}}(y)\bigr|dy,
\end{align*}
which is strictly positive. Moreover,  $\tilde f^{m_\beta}$ is an invertible affine map, hence a smooth diffeomorphism.
Repeating the same change of variables method (with $y=\tilde f^{m_\beta}(x)$) yields that
$\tilde f^{m_\beta}_\sharp\beta$ also has a strictly positive density on $\mathbb{R}^n$.

Assume that $A\subset\mathbb{R}^n$ is a Borel set such that $(f_\sharp\beta)(A)=0$. Since $f_\sharp\beta$ has a strictly positive density, this implies that $\int_A dy=0$. It follows that $(\tilde f^{m_\beta}_\sharp\beta)(A)=0$, and hence $\tilde f^{m_\beta}_\sharp\beta$ is absolutely continuous with respect to $f_\sharp\beta$. Since both $f_\sharp\beta$ and $\tilde f^{m_\beta}_\sharp\beta$ have strictly positive densities,
\begin{equation*}
    w=\frac{d(\tilde f^{m_\beta}_\sharp\beta)}{d(f_\sharp\beta)}
\end{equation*}
is strictly positive.
\end{proof}
\end{lemma}

\begin{lemma}\label{lemma:main_theorem_lemma}
Let $\alpha = \mathcal{N}(m_\alpha, \Sigma), \beta = \mathcal{N}(m_\beta, \Sigma)$ be Gaussian distributions on $\mathbb R^n$. For $\delta>0$, let $\beta_i=\mathcal N(m_{\beta_i},\Sigma), i=1,\dots,K,$ where the means $m_{\beta_i}$ are uniformly distributed in ${B}(m_\beta,\delta)$.
 Let $f:\mathbb{R}^n\to\mathbb{R}^n$ satisfy \Cref{assump:regularity}, and assume that $K$, $\delta$ satisfy \Cref{Assumption: empirical sampling}. Let $\tilde f^{m_\beta}$ be the linear approximation of $f$ at $m_\beta$ \eqref{eq:affine-approx}. Denote by
\begin{equation*}
N_{\tilde f^{m_\beta}_\sharp\beta} = \Big\{g^{\tilde f^{m_\beta}_\sharp\beta_i}_{\tilde f^{m_\beta}_\sharp\beta}\Big\}_{i=1}^K,
\qquad
N_{f_\sharp\beta} = \Big\{g^{f_\sharp \beta_i}_{f_\sharp \beta}\Big\}_{i=1}^K,
\end{equation*}
the sets containing OT displacement functions \eqref{eq:displacement_mu_mui} from $\tilde f^{m_\beta}_\sharp\beta$ to $\tilde f^{m_\beta}_\sharp\beta_i$ and from ${f}_\sharp \beta$ to ${f}_\sharp \beta_i$, respectively. Let $F^\dagger_{N_{f_\sharp\beta}}$, $F^\dagger_{N_{\tilde f^{m_\beta}_\sharp\beta}}$
be the pseudo-inverse empirical covariance operators \eqref{eq:Fdagger_Nalpha} defined by $N_{f_\sharp\beta}$, and $N_{\tilde f^{m_\beta}_\sharp\beta}$, respectively.
Suppose that there exists $p\geq 2$ such that \Cref{displacement function assumption} holds for $\beta$ with $r_\beta \geq \max\{\|m_\alpha - m_\beta\|, \delta\}$. If $p=2$, suppose in addition that  \Cref{assuption:dispacement_local} holds for $\beta$. Then
\begin{align*}
&\big\langle g^{f_\sharp\alpha}_{f_\sharp\beta}, F^\dagger_{N_{f_\sharp\beta}}\big(g^{f_\sharp\alpha}_{f_\sharp\beta}\big)\big\rangle_{f_\sharp\beta}  - \big\langle g^{\tilde f^{m_\beta}_\sharp\alpha}_{\tilde f^{m_\beta}_\sharp\beta},  F^\dagger_{N_{\tilde f^{m_\beta}_\sharp\beta}} \big(g^{\tilde f^{m_\beta}_\sharp\alpha}_{\tilde f^{m_\beta}_\sharp\beta}\big) \big\rangle_{\tilde f^{m_\beta}_\sharp\beta}\\
& = O(\delta^{p/2-3}\|m_\beta-m_\alpha\|^2)
+
O \left(\frac{\|m_\beta-m_\alpha\|^{2}}{\delta^2} H(\tilde f^{m_\beta}_\sharp\beta,\ f_\sharp\beta)\right),
\end{align*}
 where $H$ denotes the Hellinger distance (\Cref{def:hellinger_standard}).

\begin{proof}
The proof compares the nonlinear Wasserstein Mahalanobis distance with the
corresponding distance obtained after replacing $f$ by its linearization:
\begin{equation*}
\left\langle
g^{f_\sharp\alpha}_{f_\sharp\beta},
F^\dagger_{N_{f_\sharp\beta}}
\left(g^{f_\sharp\alpha}_{f_\sharp\beta}\right)
\right\rangle_{f_\sharp\beta}
\quad
\text{and}
\quad
\left\langle
g^{\tilde f^{m_\beta}_\sharp\alpha}_{\tilde f^{m_\beta}_\sharp\beta},
F^\dagger_{N_{\tilde f^{m_\beta}_\sharp\beta}}
\left(
g^{\tilde f^{m_\beta}_\sharp\alpha}_{\tilde f^{m_\beta}_\sharp\beta}
\right)
\right\rangle_{\tilde f^{m_\beta}_\sharp\beta}.
\end{equation*}
The difference in this comparison comes from two sources: the displacement difference
between $g^{f_\sharp\alpha}_{f_\sharp\beta}$ and
$g^{\tilde f^{m_\beta}_\sharp\alpha}_{\tilde f^{m_\beta}_\sharp\beta}$, and the
operator difference between $F^\dagger_{N_{f_\sharp\beta}}$ and $F^\dagger_{N_{\tilde f^{m_\beta}_\sharp\beta}}$, which were already controlled in \Cref{displacement function assumption} and by \Cref{operator differece lemma}. The result can therefore be derived directly as follows.

Define the displacement error
\begin{equation*}
\epsilon_\beta
=
g^{f_\sharp\alpha}_{f_\sharp\beta}
-
g^{\tilde f^{m_\beta}_\sharp\alpha}_{\tilde f^{m_\beta}_\sharp\beta}.
\end{equation*}
Since the linearized displacement $g^{\tilde{f}^{m_\beta}_\sharp\alpha}_{\tilde{f}^{m_\beta}_\sharp\beta}$ is constant, we treat it as the same constant function in $L^2(f_\sharp\beta)$. Using \Cref{displacement function assumption} we obtain
$
   \|\epsilon_\beta\|_{L^2(f_\sharp \beta)}^2 \leq M,
$ 
where $M$ has order $O(\|m_\beta - m_\alpha  \|^p)$. By \Cref{operator differece lemma}, we can write
\begin{equation*}
F^\dagger_{N_{f_\sharp\beta}}
=
\tilde F^\dagger_{N_{\tilde f^{m_\beta}_\sharp\beta}}
+
\Delta_\beta,
\end{equation*}
where
\begin{equation*}
\|\Delta_\beta\|_{op}
=
O(\delta^{p/2-3})
+
O(\delta^{-2} 
H(\tilde f^{m_\beta}_\sharp\beta,f_\sharp\beta)).
\end{equation*}
Using $g^{f_\sharp\alpha}_{f_\sharp\beta}
=
g^{\tilde f^{m_\beta}_\sharp\alpha}_{\tilde f^{m_\beta}_\sharp\beta}
+
\epsilon_\beta$ and $\tilde F^\dagger_{N_{\tilde f^{m_\beta}_\sharp\beta}}
+
\Delta_\beta$, we expand
\begin{align}\nonumber
 &\langle g^{f_\sharp \alpha}_{f_\sharp \beta}, F^\dagger_{N_{f_\sharp \beta}}(g^{f_\sharp \alpha}_{f_\sharp \beta})\rangle_{f_\sharp \beta}
= \langle g^{\tilde{f}^{m_\beta}_\sharp\alpha}_{\tilde{f}^{m_\beta}_\sharp\beta} + \epsilon_\beta, (\tilde F^\dagger_{N_{\tilde f^{m_\beta}_\sharp\beta}}+\Delta_\beta)(g^{\tilde{f}^{m_\beta}_\sharp\alpha}_{\tilde{f}^{m_\beta}_\sharp\beta} + \epsilon_\beta) \rangle_{f_\sharp \beta} \\ \label{eq: nonlinear inner product expansion}
&= \langle g^{\tilde{f}^{m_\beta}_\sharp\alpha}_{\tilde{f}^{m_\beta}_\sharp\beta}, \tilde F^\dagger_{N_{\tilde f^{m_\beta}_\sharp\beta}}(g^{\tilde{f}^{m_\beta}_\sharp\alpha}_{\tilde{f}^{m_\beta}_\sharp\beta})\rangle_{f_\sharp \beta} 
+ \langle g^{\tilde{f}^{m_\beta}_\sharp\alpha}_{\tilde{f}^{m_\beta}_\sharp\beta},\tilde F^\dagger_{N_{\tilde f^{m_\beta}_\sharp\beta}}(\epsilon_\beta)\rangle_{f_\sharp \beta}
+\langle \epsilon_\beta,\tilde F^\dagger_{N_{\tilde f^{m_\beta}_\sharp\beta}}(g^{\tilde{f}^{m_\beta}_\sharp\alpha}_{\tilde{f}^{m_\beta}_\sharp\beta})\rangle_{f_\sharp \beta} \\ \nonumber
&+ \langle \epsilon_\beta, \tilde F^\dagger_{N_{\tilde f^{m_\beta}_\sharp\beta}}(\epsilon_\beta)\rangle_{f_\sharp \beta}
+ \langle g^{\tilde{f}^{m_\beta}_\sharp\alpha}_{\tilde{f}^{m_\beta}_\sharp\beta},\Delta_\beta(g^{\tilde{f}^{m_\beta}_\sharp\alpha}_{\tilde{f}^{m_\beta}_\sharp\beta})\rangle_{f_\sharp \beta} + \langle \epsilon_\beta,\Delta_\beta(\epsilon_\beta)\rangle_{f_\sharp \beta}\\ \nonumber
&+ \langle \epsilon_\beta,\Delta_\beta(g^{\tilde{f}^{m_\beta}_\sharp\alpha}_{\tilde{f}^{m_\beta}_\sharp\beta})\rangle_{f_\sharp \beta}
+\langle g^{\tilde{f}^{m_\beta}_\sharp\alpha}_{\tilde{f}^{m_\beta}_\sharp\beta},\Delta_\beta(\epsilon_\beta)\rangle_{f_\sharp \beta}    
\end{align}
The quantity we want to compare with \eqref{eq: nonlinear inner product expansion} is 
$
    \langle
g^{\tilde f^{m_\beta}_\sharp\alpha}_{\tilde f^{m_\beta}_\sharp\beta},
F^\dagger_{N_{\tilde f^{m_\beta}_\sharp\beta}}
(
g^{\tilde f^{m_\beta}_\sharp\alpha}_{\tilde f^{m_\beta}_\sharp\beta}
)
\rangle_{\tilde f^{m_\beta}_\sharp\beta}.
$
By the isometry identity of $U$, we have 
\begin{equation}\label{eq:linear inner product equality}
\big\langle g^{\tilde f^{m_\beta}_\sharp\alpha}_{\tilde f^{m_\beta}_\sharp\beta}, 
F^\dagger_{N_{\tilde f^{m_\beta}_\sharp\beta}}
\big(g^{\tilde f^{m_\beta}_\sharp\alpha}_{\tilde f^{m_\beta}_\sharp\beta}\big)
\big\rangle_{\tilde f^{m_\beta}_\sharp\beta} = \big\langle U g^{\tilde f^{m_\beta}_\sharp\alpha}_{\tilde f^{m_\beta}_\sharp\beta},\ 
\tilde F^\dagger_{N_{\tilde f^{m_\beta}_\sharp\beta}}
\big(U g^{\tilde f^{m_\beta}_\sharp\alpha}_{\tilde f^{m_\beta}_\sharp\beta}\big)
\big\rangle_{f_\sharp\beta}.
\end{equation}  
Subtracting the LHS in \eqref{eq:linear inner product equality} from the LHS in \eqref{eq: nonlinear inner product expansion} and the RHS in \eqref{eq:linear inner product equality} from the RHS \eqref{eq: nonlinear inner product expansion}, we can write 

\begin{align*}
    \big\langle g^{f_\sharp\alpha}_{f_\sharp\beta}, F^\dagger_{N_{f_\sharp\beta}}\big(g^{f_\sharp\alpha}_{f_\sharp\beta}\big)\big\rangle_{f_\sharp\beta}  - \big\langle g^{\tilde f^{m_\beta}_\sharp\alpha}_{\tilde f^{m_\beta}_\sharp\beta},  F^\dagger_{N_{\tilde f^{m_\beta}_\sharp\beta}} \big(g^{\tilde f^{m_\beta}_\sharp\alpha}_{\tilde f^{m_\beta}_\sharp\beta}\big) \big\rangle_{\tilde f^{m_\beta}_\sharp\beta} = R_{\beta,1} + R_{\beta,2},
\end{align*}
where
\begin{equation}\label{eq:R1 diff}
    R_{\beta,1} = \big\langle g^{\tilde{f}^{m_\beta}_\sharp\alpha}_{\tilde{f}^{m_\beta}_\sharp\beta}, \tilde F^\dagger_{N_{\tilde f^{m_\beta}_\sharp\beta}}(g^{\tilde{f}^{m_\beta}_\sharp\alpha}_{\tilde{f}^{m_\beta}_\sharp\beta})\big\rangle_{f_\sharp \beta}  - \big\langle U g^{\tilde f^{m_\beta}_\sharp\alpha}_{\tilde f^{m_\beta}_\sharp\beta},\ 
\tilde F^\dagger_{N_{\tilde f^{m_\beta}_\sharp\beta}}
\big(U g^{\tilde f^{m_\beta}_\sharp\alpha}_{\tilde f^{m_\beta}_\sharp\beta}\big)
\big\rangle_{f_\sharp\beta},
\end{equation}
and
\begin{align}
    R_{\beta,2} &= \langle g^{\tilde{f}^{m_\beta}_\sharp\alpha}_{\tilde{f}^{m_\beta}_\sharp\beta},\tilde F^\dagger_{N_{\tilde f^{m_\beta}_\sharp\beta}}(\epsilon_\beta)\rangle_{f_\sharp \beta}
+\langle \epsilon_\beta,\tilde F^\dagger_{N_{\tilde f^{m_\beta}_\sharp\beta}}(g^{\tilde{f}^{m_\beta}_\sharp\alpha}_{\tilde{f}^{m_\beta}_\sharp\beta})\rangle_{f_\sharp \beta}\label{inner product expansion1} \\
&+ \langle \epsilon_\beta, \tilde F^\dagger_{N_{\tilde f^{m_\beta}_\sharp\beta}}(\epsilon_\beta)\rangle_{f_\sharp \beta}
+ \langle g^{\tilde{f}^{m_\beta}_\sharp\alpha}_{\tilde{f}^{m_\beta}_\sharp\beta},\Delta_\beta(g^{\tilde{f}^{m_\beta}_\sharp\alpha}_{\tilde{f}^{m_\beta}_\sharp\beta})\rangle_{f_\sharp \beta} \label{inner product expansion2} \\
&+ \langle \epsilon_\beta,\Delta_\beta(\epsilon_\beta)\rangle_{f_\sharp \beta} + \langle \epsilon_\beta,\Delta_\beta(g^{\tilde{f}^{m_\beta}_\sharp\alpha}_{\tilde{f}^{m_\beta}_\sharp\beta})\rangle_{f_\sharp \beta}
+\langle g^{\tilde{f}^{m_\beta}_\sharp\alpha}_{\tilde{f}^{m_\beta}_\sharp\beta},\Delta_\beta(\epsilon_\beta)\rangle_{f_\sharp \beta}. \label{inner product expansion3}
\end{align}

Before we bound $R_{\beta,1}$ and $R_{\beta,2}$, we first bound each of the following terms:  
$
g^{\tilde{f}^{m_\beta}_\sharp\alpha}_{\tilde{f}^{m_\beta}_\sharp\beta},
\tilde F^\dagger_{N_{\tilde f^{m_\beta}_\sharp\beta}},
\text{ and }
\Delta_\beta .
$
Recall that
\begin{equation*}
g^{\tilde{f}^{m_\beta}_\sharp\alpha}_{\tilde{f}^{m_\beta}_\sharp\beta}  =   J_f(m_\beta)(  m_\alpha - m_\beta  ).
\end{equation*}
Therefore
\begin{equation}\label{eq:Bound_item1}
\bigl\|g^{\tilde{f}^{m_\beta}_\sharp\alpha}_{\tilde{f}^{m_\beta}_\sharp\beta}
\bigr\|_{L^2(f_\sharp\beta)}
\le
\|J_f(m_\beta)\|_2\|m_\alpha-m_\beta\|.
\end{equation}
By definition, for any 
$\phi\in L^2(\tilde f^{m_\beta}_\sharp\beta)$, with $\|\phi\|_{L^2(\tilde f^{m_\beta}_\sharp\beta)}  =1$, we have
\begin{align*}
\left\|
F^\dagger_{N_{\tilde f^{m_\beta}_\sharp\beta}}\phi
\right\|_{L^2(\tilde f^{m_\beta}_\sharp\beta)}
&=
\left\|
\sum_{k=1}^K \sum_{l=1}^K
(C^\dagger)_{kl}
g^{\tilde f^{m_\beta}_\sharp\beta_k}_{\tilde f^{m_\beta}_\sharp\beta}
\left\langle
g^{\tilde f^{m_\beta}_\sharp\beta_l}_{\tilde f^{m_\beta}_\sharp\beta},
\phi
\right\rangle_{\tilde f^{m_\beta}_\sharp\beta}
\right\|_{L^2(\tilde f^{m_\beta}_\sharp\beta)} \\
&\le
\sum_{k=1}^K \sum_{l=1}^K
|(C^\dagger)_{kl}|
\left\|
g^{\tilde f^{m_\beta}_\sharp\beta_k}_{\tilde f^{m_\beta}_\sharp\beta}
\right\|_{L^2(\tilde f^{m_\beta}_\sharp\beta)}
\left\|
g^{\tilde f^{m_\beta}_\sharp\beta_l}_{\tilde f^{m_\beta}_\sharp\beta}
\right\|_{L^2(\tilde f^{m_\beta}_\sharp\beta)}.
\end{align*}
Since
\begin{equation*}
g^{\tilde f^{m_\beta}_\sharp\beta_i}_{\tilde f^{m_\beta}_\sharp\beta}
=
J_f(m_\beta)(m_{\beta_i}-m_\beta),
\end{equation*}
and $\|m_{\beta_i}-m_\beta\|\le \delta$, we have
\begin{equation*}
\left\|
g^{\tilde f^{m_\beta}_\sharp\beta_i}_{\tilde f^{m_\beta}_\sharp\beta}
\right\|
\le
\|J_f(m_\beta)\|_2 \delta .
\end{equation*}
Therefore,
\begin{align*}
\left\|
F^\dagger_{N_{\tilde f^{m_\beta}_\sharp\beta}}\phi
\right\|_{L^2(\tilde f^{m_\beta}_\sharp\beta)}
&\le
\|J_f(m_\beta)\|_2^2\delta^2
\left(
\sum_{k=1}^K \sum_{l=1}^K |(C^\dagger)_{kl}|
\right)
\\
&\le
\|J_f(m_\beta)\|_2^2\delta^2
K\|C^\dagger\|_F \\
&\le
K^{3/2}
\|J_f(m_\beta)\|_2^2\delta^2
\|C^\dagger\|_2
.
\end{align*}
Using \Cref{lemma:bounds for C}, we obtain
\begin{align*}
\left\|
F^\dagger_{N_{\tilde f^{m_\beta}_\sharp\beta}}\phi
\right\|_{L^2(\tilde f^{m_\beta}_\sharp\beta)}
&\le
K^{3/2}
\|J_f(m_\beta)\|_2^2\delta^2
\frac{(n+2)^2}
{K\delta^4[\sigma_{\min}(J_f(m_\beta))]^4}
 \\
&=
\frac{K^{1/2}(n+2)^2\|J_f(m_\beta)\|_2^2}
{\delta^2[\sigma_{\min}(J_f(m_\beta))]^4}
.
\end{align*}
By \eqref{eq:op_norm_invariance} we get the following.
\begin{equation}\label{eq:Bound_item2}
\bigl\|
\tilde F^\dagger_{N_{\tilde f^{m_\beta}_\sharp\beta}}
\bigr\|_{\mathrm{op}}
\le
\frac{K^{1/2}(n+2)^2\|J_f(m_\beta)\|_2^2}
{\delta^2[\sigma_{\min}(J_f(m_\beta))]^4}.
\end{equation}
From \Cref{operator differece lemma},
\begin{equation}\label{eq:Bound_item3}
    \|\Delta_\beta\|_{op} \leq \kappa
\end{equation}
where $\kappa$ has order $O(\delta^{p/2-3})
+
O\big(\delta^{-2}  H(\tilde f^{m_\beta}_\sharp\beta,\ f_\sharp\beta)\big)$. 

We now bound the three groups of terms in 
\eqref{inner product expansion1}, \eqref{inner product expansion2}, and 
\eqref{inner product expansion3} in $R_{\beta,2}$ separately using \eqref{eq:Bound_item1} \eqref{eq:Bound_item2}, \eqref{eq:Bound_item3}.  First, the two terms in \eqref{inner product expansion1} are bounded by
\begin{align*}
\bigl\langle g^{\tilde{f}^{m_\beta}_\sharp\alpha}_{\tilde{f}^{m_\beta}_\sharp\beta},\tilde F^\dagger_{N_{\tilde f^{m_\beta}_\sharp\beta}}(\epsilon_\beta)
\bigr\rangle_{f_\sharp \beta}
+
\bigl\langle \epsilon_\beta,\tilde F^\dagger_{N_{\tilde f^{m_\beta}_\sharp\beta}}(g^{\tilde{f}^{m_\beta}_\sharp\alpha}_{\tilde{f}^{m_\beta}_\sharp\beta})
\bigr\rangle_{f_\sharp \beta}
&\le 2\|\tilde F^\dagger_{N_{\tilde f^{m_\beta}_\sharp\beta}}\|_{op} 
      \|g^{\tilde{f}^{m_\beta}_\sharp\alpha}_{\tilde{f}^{m_\beta}_\sharp\beta}\|_{L^2(f_\sharp \beta)} 
      \|\epsilon_\beta\|_{L^2(f_\sharp \beta)}\\
&\le \frac{2K^{1/2}(n+2)^2\|J_f(m_\beta)\|_2^3}
{\delta^2[\sigma_{\min}(J_f(m_\beta))]^4} 
      \|m_\alpha - m_\beta\|\sqrt{M}.
\end{align*}
Next, the first term in \eqref{inner product expansion2} satisfies
\begin{align*}   
\bigl\langle \epsilon_\beta,\tilde F^\dagger_{N_{\tilde f^{m_\beta}_\sharp\beta}}(\epsilon_\beta)
\bigr\rangle_{f_\sharp \beta}
&\le \|\tilde F^\dagger_{N_{\tilde f^{m_\beta}_\sharp\beta}}\|_{op} 
      \|\epsilon_\beta\|_{L^2(f_\sharp \beta)}^{2}
=\frac{K^{1/2}(n+2)^2\|J_f(m_\beta)\|_2^2}
{\delta^2[\sigma_{\min}(J_f(m_\beta))]^4}
      M
\end{align*}
The second term in \eqref{inner product expansion2} is bounded by
\begin{align*}
\bigl\langle g^{\tilde{f}^{m_\beta}_\sharp\alpha}_{\tilde{f}^{m_\beta}_\sharp\beta},\Delta_\beta(g^{\tilde{f}^{m_\beta}_\sharp\alpha}_{\tilde{f}^{m_\beta}_\sharp\beta})
\bigr\rangle_{f_\sharp \beta}
&\le \kappa \|g^{\tilde{f}^{m_\beta}_\sharp\alpha}_{\tilde{f}^{m_\beta}_\sharp\beta}\|_{L^2(f_\sharp \beta)}^{2}
\le \kappa \|J_f(m_\beta)\|_{2}^{2} \|m_\alpha - m_\beta\|^{2}.
\end{align*}
Finally, we bound the three terms in \eqref{inner product expansion3}. The first term in \eqref{inner product expansion3} is bounded by
\begin{align*}
\bigl\langle \epsilon_\beta,\Delta_\beta(\epsilon_\beta)
\bigr\rangle_{f_\sharp \beta}
&\le \kappa M.
\end{align*}
The remaining two terms in \eqref{inner product expansion3} satisfy
\begin{align*}
\bigl\langle g^{\tilde{f}^{m_\beta}_\sharp\alpha}_{\tilde{f}^{m_\beta}_\sharp\beta},\Delta_\beta(\epsilon_\beta)
\bigr\rangle_{f_\sharp \beta}
+
\bigl\langle \epsilon_\beta,\Delta_\beta(g^{\tilde{f}^{m_\beta}_\sharp\alpha}_{\tilde{f}^{m_\beta}_\sharp\beta})
\bigr\rangle_{f_\sharp \beta}
&\leq 2\kappa  
      \|g^{\tilde{f}^{m_\beta}_\sharp\alpha}_{\tilde{f}^{m_\beta}_\sharp\beta}\|_{L^2(f_\sharp \beta)} 
      \|\epsilon_\beta\|_{L^2(f_\sharp \beta)}\\
&\le 2\kappa \|J_f(m_\beta)\|_{2} 
         \|m_\alpha - m_\beta\| \sqrt{M}.
\end{align*}
Combining the bounds from 
\eqref{inner product expansion1}, \eqref{inner product expansion2}, and 
\eqref{inner product expansion3}, we obtain
\begin{align*}
    |R_{\beta,2}| 
      & \le 
\frac{K^{1/2}(n+2)^2\|J_f(m_\beta)\|_2^2}
{\delta^2[\sigma_{\min}(J_f(m_\beta))]^4}
      M
 + 
\frac{2K^{1/2}(n+2)^2\|J_f(m_\beta)\|_2^3}
{\delta^2[\sigma_{\min}(J_f(m_\beta))]^4} 
      \|m_\alpha - m_\beta\|\sqrt{M}\\
 &+ 
\kappa \Bigl(
      M
     +2\|J_f(m_\beta)\|_{2}\|m_\alpha-m_\beta\|\sqrt{M}
     +\|J_f(m_\beta)\|_{2}^{2}\|m_\alpha-m_\beta\|^{2}
 \Bigr),
\end{align*}
which has order 

\begin{equation*}
\begin{aligned}
|R_{\beta,2}|
&=
O \left(\delta^{-2} \|m_\beta-m_\alpha\|^{p}\right)
+
O\left(\delta^{-2} \|m_\beta-m_\alpha\|^{1+p/2}\right)
\\
&+
O \left(\delta^{p/2-3} \|m_\beta-m_\alpha\|^{p}\right)
+
O \left(\delta^{-2} \|m_\beta-m_\alpha\|^{p}  H(\tilde f^{m_\beta}_\sharp\beta,\ f_\sharp\beta)\right)
\\
&+
O \left(\delta^{p/2-3} \|m_\beta-m_\alpha\|^{1+p/2}\right)
+
O \left(\delta^{-2} \|m_\beta-m_\alpha\|^{1+p/2}  H(\tilde f^{m_\beta}_\sharp\beta,\ f_\sharp\beta)\right)
\\
&+
O \left(\delta^{p/2-3} \|m_\beta-m_\alpha\|^{2}\right)
+
O \left(\delta^{-2} \|m_\beta-m_\alpha\|^{2}  H(\tilde f^{m_\beta}_\sharp\beta,\ f_\sharp\beta)\right).
\end{aligned}
\end{equation*}
For $p\ge2$, the leading order terms are $O(\delta^{p/2-3}\|m_\beta-m_\alpha\|^2)
+
O \left(\frac{\|m_\beta-m_\alpha\|^{2}}{\delta^2} H(\tilde f^{m_\beta}_\sharp\beta,\ f_\sharp\beta)\right)$.\footnote{When $p=2$, all terms yield equal order. When $p>2$, let
$h=\|m_\beta-m_\alpha\|$. The Hellinger-related terms satisfy
$h^p\le h ^{1+p/2}\le h ^2$, so the dominant Hellinger-related term is
$O(\delta^{-2}h^2H(\tilde f^{m_\beta}_\sharp\beta,f_\sharp\beta))$.
For the non-Hellinger terms, $\delta^{p/2-3}h^2$ dominates.}

Now we bound $|R_{\beta,1}|$ \eqref{eq:R1 diff}. Since $\bigl\|Ug^{\tilde f^{m_\beta}_\sharp\alpha}_{\tilde f^{m_\beta}_\sharp\beta}\bigr\|_{L^2(f_\sharp\beta)}
=
\bigl\|g^{\tilde f^{m_\beta}_\sharp\alpha}_{\tilde f^{m_\beta}_\sharp\beta}\bigr\|_{L^2(\tilde f^{m_\beta}_\sharp\beta)}=
\bigl\|g^{\tilde f^{m_\beta}_\sharp\alpha}_{\tilde f^{m_\beta}_\sharp\beta}\bigr\|_{L^2(f_\sharp\beta)}$, we have
\begin{align*}
\label{eq:transport_gap_bound_simplified}
|R_{\beta,1}| &= \Big|
\big\langle 
g^{\tilde f^{m_\beta}_\sharp\alpha}_{\tilde f^{m_\beta}_\sharp\beta},\ 
\tilde F^\dagger_{N_{\tilde f^{m_\beta}_\sharp\beta}}
\big(g^{\tilde f^{m_\beta}_\sharp\alpha}_{\tilde f^{m_\beta}_\sharp\beta}\big)
\big\rangle_{f_\sharp\beta}
-
\big\langle U g^{\tilde f^{m_\beta}_\sharp\alpha}_{\tilde f^{m_\beta}_\sharp\beta},\ 
\tilde F^\dagger_{N_{\tilde f^{m_\beta}_\sharp\beta}}
\big(U g^{\tilde f^{m_\beta}_\sharp\alpha}_{\tilde f^{m_\beta}_\sharp\beta}\big)
\big\rangle_{f_\sharp\beta}
\Big| \\
&\le
\bigl\|\tilde F^\dagger_{N_{\tilde f^{m_\beta}_\sharp\beta}}\bigr\|_{op} 
\Big(
\bigl\|g^{\tilde f^{m_\beta}_\sharp\alpha}_{\tilde f^{m_\beta}_\sharp\beta}\bigr\|_{L^2(f_\sharp\beta)}
+
\bigl\|U g^{\tilde f^{m_\beta}_\sharp\alpha}_{\tilde f^{m_\beta}_\sharp\beta}\bigr\|_{L^2(f_\sharp\beta)}
\Big) 
\bigl\|U g^{\tilde f^{m_\beta}_\sharp\alpha}_{\tilde f^{m_\beta}_\sharp\beta}
-
g^{\tilde f^{m_\beta}_\sharp\alpha}_{\tilde f^{m_\beta}_\sharp\beta}
\bigr\|_{L^2(f_\sharp\beta)}\\
&\le
\bigl\|\tilde F^\dagger_{N_{\tilde f^{m_\beta}_\sharp\beta}}\bigr\|_{op} 
\Big(
2 \|g^{\tilde f^{m_\beta}_\sharp\alpha}_{\tilde f^{m_\beta}_\sharp\beta}\bigr\|_{L^2(f_\sharp\beta)}\Big)
\bigl\|U g^{\tilde f^{m_\beta}_\sharp\alpha}_{\tilde f^{m_\beta}_\sharp\beta}
-
g^{\tilde f^{m_\beta}_\sharp\alpha}_{\tilde f^{m_\beta}_\sharp\beta}
\bigr\|_{L^2(f_\sharp\beta)}.
\end{align*}
By definition we jave
$
U g^{\tilde f^{m_\beta}_\sharp\alpha}_{\tilde f^{m_\beta}_\sharp\beta}
-
g^{\tilde f^{m_\beta}_\sharp\alpha}_{\tilde f^{m_\beta}_\sharp\beta}
=
g^{\tilde f^{m_\beta}_\sharp\alpha}_{\tilde f^{m_\beta}_\sharp\beta}\big(\sqrt{w}-1\big).
$
Taking the $L^2$ norm and using \eqref{eq:Hellinger_distance_transition} from \Cref{operator differece lemma}, we obtain the following.
\begin{align*}
\bigl\|U g^{\tilde f^{m_\beta}_\sharp\alpha}_{\tilde f^{m_\beta}_\sharp\beta}
-
g^{\tilde f^{m_\beta}_\sharp\alpha}_{\tilde f^{m_\beta}_\sharp\beta}\bigr\|_{L^2(f_\sharp\beta)}
&=
\Big(\int_{\R^n}
\bigl\|g^{\tilde f^{m_\beta}_\sharp\alpha}_{\tilde f^{m_\beta}_\sharp\beta}\bigr\|^2
(\sqrt{w(y)}-1)^2 d(f_\sharp\beta)(y)\Big)^{1/2}\\
&=
\bigr\|g^{\tilde f^{m_\beta}_\sharp\alpha}_{\tilde f^{m_\beta}_\sharp\beta}\bigr\|
\Big(\int_{\R^n}(\sqrt{w(y)}-1)^2 d(f_\sharp\beta)(y)\Big)^{1/2}\\
&\le \|J_f(m_\beta)\| \|m_\beta-m_\alpha\| \sqrt{2} H(\tilde f^{m_\beta}_\sharp\beta,\ f_\sharp\beta).
\end{align*}
Using \eqref{eq:Bound_item1} \eqref{eq:Bound_item2}, it follows that $|R_{\beta,1}|$ has order $O\big(\frac{\|m_\alpha - m_\beta\|^2}{\delta^2} H(\tilde f^{m_\beta}_\sharp\beta,\ f_\sharp\beta)\big).$
Thus,
\begin{align*}
&\big\langle g^{f_\sharp\alpha}_{f_\sharp\beta}, F^\dagger_{N_{f_\sharp\beta}}\big(g^{f_\sharp\alpha}_{f_\sharp\beta}\big)\big\rangle_{f_\sharp\beta}  - \big\langle g^{\tilde f^{m_\beta}_\sharp\alpha}_{\tilde f^{m_\beta}_\sharp\beta},  F^\dagger_{N_{\tilde f^{m_\beta}_\sharp\beta}} \big(g^{\tilde f^{m_\beta}_\sharp\alpha}_{\tilde f^{m_\beta}_\sharp\beta}\big) \big\rangle_{\tilde f^{m_\beta}_\sharp\beta}\\
&= O(\delta^{p/2-3}\|m_\beta-m_\alpha\|^2)
+
O \left(\frac{\|m_\beta-m_\alpha\|^{2}}{\delta^2}  H(\tilde f^{m_\beta}_\sharp\beta,\ f_\sharp\beta)\right).\qedhere
\end{align*}
\end{proof}

\end{lemma}

\section{Algorithms}
\begin{algorithm}[H]
\caption{We compute the discrete OT displacement vector from the exact optimal transport plan using its barycentric projection.}
\begin{algorithmic}[1]\label{Algo:OT_displacement}
\Statex \textbf{Input:} Two empirical point clouds
\begin{equation*}
X=\{x_r\}_{r=1}^N \subset \mathbb{R}^d,
\qquad
Y=\{y_s\}_{s=1}^N \subset \mathbb{R}^d.
\end{equation*}
\STATE Set the source and target weights
\begin{equation*}
a=\frac{1}{N},
\qquad
b=\frac{1}{N}.
\end{equation*}
\STATE Form the Euclidean cost matrix
\begin{equation*}
C_{rs}=\|x_r-y_s\|^2,
\qquad
r,s=1,\dots,N.
\end{equation*}

\STATE Compute the exact OT plan
\begin{equation*}
\Gamma=(\gamma_{rs})\in\mathbb{R}^{N\times N}
\end{equation*}
using \texttt{ot.emd(a,b,C)} \cite{python2021pot}.

\STATE Compute the row sums
\begin{equation*}
m_r=\sum_{s=1}^N \gamma_{rs},
\qquad
r=1,\dots,N.
\end{equation*}

\STATE Compute the barycentric image of each source point
\begin{equation*}
\widetilde y_r
=
\frac{\sum_{s=1}^N \gamma_{rs} y_s}{m_r},
\qquad
r=1,\dots,N.
\end{equation*}

\STATE Compute the pointwise displacements
\begin{equation*}
d_r=\widetilde y_r-x_r,
\qquad
r=1,\dots,N.
\end{equation*}

\STATE Stack all pointwise displacements into a single vector
\begin{equation*}
g_X^Y=\mathrm{vec}(d_1,\dots,d_N)\in\mathbb{R}^{dN}.
\end{equation*}

\Statex \textbf{Output:} OT displacement from $X$ to $Y$: $g_X^Y\in\mathbb{R}^{dN}$.
\end{algorithmic}
\end{algorithm}

\begin{algorithm}[H]
\caption{Construction of the empirical covariance operator and its pseudo-inverse at a fixed index $i$.}
\begin{algorithmic}[1]\label{algo:Pesudo_Inverse_Operater}
\Statex \textbf{Input:} 
A center point cloud
\begin{equation*}
U_i=\{u_r\}_{r=1}^N \subset \mathbb{R}^d,
\end{equation*}
and a collection of nearby clouds
\begin{equation*}
{U_i}^{(1)},{U_i}^{(2)},\dots,{U_i}^{(m)} \subset \mathbb{R}^d,
\end{equation*}
each containing $N$ sample points.

\STATE For each $k=1,\dots,m$, compute the discrete OT displacement vector (\Cref{Algo:OT_displacement})
\begin{equation*}
g_{U_i}^{U_i^{(k)}} \in \mathbb{R}^{dN}
\end{equation*}
from $U_i$ to ${U_i}^{(k)}$.

\STATE Stack these displacement vectors into the matrix
\begin{equation*}
X_i=
\begin{bmatrix}
 g_{U_i}^{U_i^{(1)}} & g_{U_i}^{U_i^{(2)}} & \cdots & g_{U_i}^{U_i^{(m)}} 
\end{bmatrix}
\in \mathbb{R}^{dN\times m}.
\end{equation*}

\STATE Form the empirical covariance operator
\begin{equation*}
F_i=\frac{1}{m}X_iX_i^\top \in \mathbb{R}^{dN\times dN}.
\end{equation*}

\STATE Compute the singular value decomposition
\begin{equation*}
F_i = U \Sigma V^\top,
\end{equation*}
where
\begin{equation*}
\Sigma=\mathrm{diag}(\sigma_1,\sigma_2,\dots).
\end{equation*}

\STATE Construct the truncated reciprocal singular values by keeping only the first $d$ singular values
\begin{equation*}
\Sigma_d^\dagger=\mathrm{diag} \left(\sigma_1^{-1},\dots,\sigma_d^{-1},0,\dots,0\right).
\end{equation*}

\STATE Define the pseudo-inverse of the empirical covariance operator by
\begin{equation*}
F_i^\dagger = V \Sigma_d^\dagger U^\top.
\end{equation*}

\Statex \textbf{Output:} The empirical covariance operator pseudo-inverse $F_i^\dagger \in \mathbb{R}^{dN\times dN}$.

\end{algorithmic}
\end{algorithm}
\end{document}